\documentclass[10pt,oneside,a4paper]{amsart}
\usepackage{amsmath,amsfonts,amsthm, amssymb}
\usepackage{array}
\usepackage[all,textures]{xy}
\usepackage{color}
\usepackage[orange]{xcolor}
\definecolor{blue(munsell)}{rgb}{0.0, 0.5, 0.69}
\usepackage{hyperref}
\hypersetup{hypertexnames = false, bookmarksdepth = 2, bookmarksopen = true, colorlinks, linkcolor = black, citecolor = blue(munsell), urlcolor = blue(munsell), pdfstartview={XYZ null null 1}}

\theoremstyle{plain}
\newtheorem{theorem}{Theorem}[section]
\newtheorem{proposition}[theorem]{Proposition}
\newtheorem{lemma}[theorem]{Lemma}
\newtheorem{corollary}[theorem]{Corollary}

\theoremstyle{definition}
\newtheorem{definition}[theorem]{Definition}

\theoremstyle{remark}

\newtheorem{remark}[theorem]{Remark}

\newcommand{\Kern}{\mathrm{Ker}}

\newcommand{\Beeld}{\mathrm{Im}}

\renewcommand{\lim}{\mathrm{lim}}

\newcommand{\Ext}{\mathrm{Ext}}

\newcommand{\RHom}{\mathrm{RHom}}

\newcommand{\fp}{\mathsf{fp}}

\newcommand{\op}{^{\mathrm{op}}}
\newcommand{\Ob}{\mathrm{Ob}}
\newcommand{\Z}{\mathbb{Z}}

\newcommand{\N}{\mathbb{N}}

\newcommand{\AAA}{\mathfrak{a}}
\newcommand{\BBB}{\mathfrak{b}}

\newcommand{\CCC}{\mathfrak{c}}
\newcommand{\DDD}{\mathfrak{d}}
\newcommand{\EEE}{\mathfrak{e}}

\newcommand{\Set}{\ensuremath{\mathsf{Set}} }

\newcommand{\Mod}{\ensuremath{\mathsf{Mod}} }

\newcommand{\Lex}{\ensuremath{\mathsf{Lex}} }
\newcommand{\Cont}{\ensuremath{\mathsf{Cont}}}
\newcommand{\Cocont}{\ensuremath{\mathsf{Cocont}}}

\newcommand{\Sh}{\ensuremath{\mathsf{Sh}} }

\newcommand{\Qch}{\ensuremath{\mathsf{Qch}} }

\newcommand{\Qgr}{\ensuremath{\mathsf{Qgr}} }

\newcommand{\Cat}{\ensuremath{\mathsf{Cat}} }

\newcommand{\Fun}{\ensuremath{\mathsf{Fun}}}

\newcommand{\lra}{\longrightarrow}

\newcommand{\aaa}{\ensuremath{\mathcal{A}}}
\newcommand{\bbb}{\ensuremath{\mathcal{B}}}
\newcommand{\ccc}{\ensuremath{\mathcal{C}}}
\newcommand{\ddd}{\ensuremath{\mathcal{D}}}

\newcommand{\hhh}{\ensuremath{\mathcal{H}}}

\newcommand{\LLL}{\ensuremath{\mathcal{L}}}

\newcommand{\rrr}{\ensuremath{\mathcal{R}}}

\newcommand{\ttt}{\ensuremath{\mathcal{T}}}

\newcommand{\www}{\ensuremath{\mathcal{W}}}

\CompileMatrices
\SelectTips{cm}{11}

\title{On the tensor product of linear sites and Grothendieck categories}
\author{Wendy Lowen} 
\address[Wendy Lowen]{Universiteit Antwerpen, Departement Wiskunde-Informatica, Middelheimcampus,
	Middelheimlaan 1,
	2020 Antwerp, Belgium}
\email{wendy.lowen@uantwerpen.be}

\author{Julia Ramos Gonz\'alez}
\address[Julia Ramos Gonz\'alez]{Universiteit Antwerpen, Departement Wiskunde-Informatica, Middelheimcampus,
	Middelheimlaan 1,
	2020 Antwerp, Belgium}
\email{julia.ramosgonzalez@uantwerpen.be}

\author{Boris Shoikhet}
\address[Boris Shoikhet]{Universiteit Antwerpen, Departement Wiskunde-Informatica, Middelheimcampus,
	Middelheimlaan 1,
	2020 Antwerp, Belgium}
\email{boris.shoikhet@uantwerpen.be}

\thanks{The authors acknowledge the support of the European Union for the ERC grant No 257004-HHNcdMir and the support of the Research Foundation Flanders (FWO) under Grant No. G.0112.13N}

\usepackage{pdfsync}
\begin{document}
\maketitle

\begin{abstract}
We define a tensor product of linear sites, of which we investigate the functoriality properties. Consequently we define a tensor product of Grothendieck categories based upon their representations as categories of linear sheaves. We show that our tensor product is a special case of the tensor product of locally presentable linear categories, and that the tensor product of locally coherent Grothendieck categories is locally coherent if and only if the Deligne tensor product of their abelian categories of finitely presented objects exists. We describe the tensor product of non-commutative projective schemes in terms of $\Z$-algebras, and show that for projective schemes our tensor product corresponds to the usual product scheme.
\end{abstract}

\section{Introduction}

A Grothendieck category $\ccc$ is a cocomplete abelian category with a generator and exact filtered colimits. Grothendieck categories are arguably the most important large abelian categories, second only to module categories. They play an important role in non-commutative algebraic geometry, where they are used as models for non-commutative spaces since the work of Artin, Stafford, Van den Bergh and others (\cite{artintatevandenbergh}, \cite{artinzhang2}, \cite{staffordvandenbergh}). In algebraic geometry, one of the most basic operations to be performed with schemes $X$ and $Y$ is taking their product scheme $X \times Y$. For affine schemes $\mathrm{Spec}(A)$ and $\mathrm{Spec}(B)$, this corresponds to taking the tensor product $A \otimes B$ of the underlying rings. Our aim in this paper is to define a tensor product $\ccc \boxtimes \ddd$ for arbitrary Grothendieck categories $\ccc$ and $\ddd$, such that for rings $A$ and $B$ we have
\begin{equation}
\Mod(A) \boxtimes \Mod(B) = \Mod(A \otimes B).
\end{equation}
As was originally shown in the Gabri\"el-Popescu theorem \cite{popescugabriel}, Grothendieck categories are precisely the localizations of module categories. One way of seeing this, is by describing localizations of the category $\Mod(A)$ of modules over a ring $A$ by means of data on $A$, so called Gabri\"el topologies. In the Gabri\"el-Popescu theorem, the endomorphism ring of a generator of $\ccc$ is endowed with such a Gabri\"el topology. Using the language of (linear) topologies on linear categories $\AAA$, more generally one can characterize linear functors $\AAA \lra \ccc$ which induce an equivalence $\ccc \cong \Sh(\AAA, \ttt_{\AAA}) \subseteq \Mod(\AAA)$, where $\ttt_{\AAA}$ is a certain topology on $\AAA$ and $\Sh(\AAA, \ttt_{\AAA})$ is the category of linear sheaves on $\AAA$ with respect to this topology \cite{lowenGP}. Our approach to the definition of a tensor product of Grothendieck categories consists of the following steps:
\begin{enumerate}
\item[(i)] First, we define the tensor product of linear sites $(\AAA, \ttt_{\AAA})$ en $(\BBB, \ttt_{\BBB})$ to be $(\AAA \otimes \BBB, \ttt_{\AAA} \boxtimes \ttt_{\BBB})$ for a certain tensor product topology $\ttt_{\AAA} \boxtimes \ttt_{\BBB}$ on the standard tensor product of linear categories $\AAA \otimes \BBB$.
\item[(ii)] Next, we show that the definition 
\begin{equation}\label{eqthedef}
\Sh(\AAA, \ttt_{\AAA}) \boxtimes \Sh(\BBB, \ttt_{\BBB}) = \Sh(\AAA \otimes \BBB, \ttt_{\AAA} \boxtimes \ttt_{\BBB})
\end{equation}
is a good definition for Grothendieck categories, as it is independent of the particular sites chosen in the sheaf category representations (up to equivalence of categories).
\end{enumerate}
Step (i) is carried out in \S \ref{parpartensorlin}. The topologies $\ttt_{\AAA}$ and $\ttt_{\BBB}$ naturally give rise to two ``one-sided'' topologies $\ttt_1$ and $\ttt_2$ on $\AAA \otimes \BBB$, and we put $\ttt_{\AAA} \boxtimes \ttt_{\BBB}$ equal to the supremum of $\ttt_1$ and $\ttt_2$ in the lattice of topologies on $\AAA \otimes \BBB$ (Definition \ref{deftensortop}). We further describe the corresponding operations between localizing Serre subcategories, as well as between strict localizations. In particular, we show that
\begin{equation}\label{cap}
\Sh(\AAA \otimes \BBB, \ttt_{\AAA} \boxtimes \ttt_{\BBB}) = \Sh(\AAA \otimes \BBB, \ttt_1) \cap \Sh(\AAA \otimes \BBB, \ttt_2).
\end{equation}
For compatible localizing Serre subcategories in the sense of \cite{verschoren1}, it is well known that their supremum is described by the Gabri\"el product, and using this description it is easily seen that the infimum of compatible strict localizations is simply their intersection.
However, the general case is more subtle and our analysis is based upon the construction of a semilocalizing hull (Proposition \ref{propfilt}), where a full subcategory is called \emph{semilocalizing} if it is closed under extensions and coproducts. This eventually leads to the proof of \eqref{cap} in complete generality.

An application of our constructions to the strict localizations and localizing Serre subcategories corresponding to the linear sites associated to Quillen exact categories, recovers the constructions from \cite{kaledinlowen}, which inspired the current work (\S \ref{parparex}).

Step (ii) is based upon an analysis of the functoriality of our tensor product of sites, which is carried out in \S \ref{parparfun}. An alternative approach making use of the already established tensor product of locally presentable categories going back to Kelly \cite{kelly2} \cite{kelly1} will be discussed in \S \ref{parfuture}. Since the functoriality properties established in \S \ref{parparfun} are of independent interest in the context of non-commutative geometry, we present a complete proof of step (ii) without reference to local presentablility, thus reflecting our own initial approach to the subject. A detailed discussion of the relation with the tensor product of locally presentable categories is contained in \S \ref{parlocpres}.

The classical notions of continuous and cocontinuous functors from \cite{artingrothendieckverdier1} have their linear counterparts, and we show that these types of functors are preserved by the tensor product of sites. Our main interest goes out to a special type of functors $\phi: (\AAA, \ttt_{\AAA}) \lra (\BBB, \ttt_{\BBB})$ between sites, which we call LC functors (the letters stand for ``Lemme de comparaison''). Roughly speaking, $\phi$ satisfies (LC) (Definition \ref{defLC}) if:
\begin{enumerate}
\item $\phi$ is generating with respect to $\ttt_{\BBB}$;
\item $\phi$ is fully faithful up to $\ttt_{\AAA}$;
\item $\ttt_{\AAA} = \phi^{-1}\ttt_{\BBB}$.
\end{enumerate}
The technical heart of the paper is the proof that our tensor product preserves LC functors (Proposition \ref{propLC}). Both the generating condition (1) and the fullness part of condition (2) are preserved separately. However, the faithfulness part is only preserved in combination with fullness (Lemma \ref{lemFFF}). This extends the situation for rings: surjections of rings are preserved under tensor product, injections are not (unless some flatness is assumed), but isomorphisms are obviously preserved by any functor hence also by tensoring.

The importance of LC functors $\phi: (\AAA, \ttt_{\AAA}) \lra (\BBB, \ttt_{\BBB})$ lies in the fact that they induce equivalences of categories $\Sh(\BBB, \ttt_{\BBB}) \cong \Sh(\AAA, \ttt_{\AAA})$. Further, any two representations of a given Grothendieck category $\ccc$ as $\ccc \cong \Sh(\AAA, \ttt_{\AAA})$ and $\ccc \cong \Sh(\AAA', \ttt_{\AAA'})$ can be related through a roof of LC functors. This easily yields independence of \eqref{eqthedef} from the choice of sheaf category representations (Proposition \ref{propwelldef}).

In \S \ref{parpartensorgroth}, we define the tensor product $\ccc \boxtimes \ddd$ for arbitrary Grothendieck categories $\ccc$ and $\ddd$ by formula \eqref{eqthedef} for arbitrary representations $\ccc \cong \Sh(\AAA, \ttt_{\AAA})$ and $\ddd \cong \Sh(\BBB, \ttt_{\BBB})$ (Definition \ref{defthedef}). We apply the tensor product to $\Z$-algebras and schemes. In \cite{bondalpolishchuk}, \cite{vandenbergh2}, $\Z$-algebras are used as a tool to describe non-commutative deformations of projective planes and quadrics. They are closely related to the graded algebras turning up in projective geometry, but better suited for the purpose of algebraic deformation. In particular, under some finiteness conditions, they allow nice categories of ``quasicoherent modules'' \cite{staffordvandenbergh}, \cite{polishchuk}. A (positively graded) $\Z$-algebra is a linear category $\AAA$ with $\Ob(\AAA) = \Z$ and $\AAA(n,m) = 0$ unless $n \geq m$. In \cite{dedekenlowen}, $\Z$-algebras $\AAA$ are endowed with a certain \emph{tails topology} $\ttt_{\mathrm{tails}}$ and the category $\Sh(\AAA, \ttt_{\mathrm{tails}})$ is proposed as a replacement for the category of quasicoherent modules, which exists in complete generality. We thus investigate the tensor product of two arbitrary tails sites $(\AAA, \ttt_{\AAA})$ and $(\BBB, \ttt_{\BBB})$ and show the existence of a cocontinuous functor
\begin{equation}\label{delta}
\Delta: ((\AAA \otimes \BBB)_{\Delta}, \ttt_{\mathrm{tails}}) \lra (\AAA \otimes \BBB, \ttt_{\AAA} \boxtimes \ttt_{\BBB})
\end{equation}
from the natural diagonal $\Z$-algebra $(\AAA \otimes \BBB)_{\Delta} \subseteq \AAA \otimes \BBB$ consisting of the objects $(n,n)$ for $n \in \Z$ to the tensor site (Proposition \ref{propdelta}). For a $\Z$-algebra $\AAA$, the \emph{degree} of an element in $\AAA(n,m)$ is $n - m$ and we say that $\AAA$ is generated in degree $1$ if every element can be written as a linear combination of products of elements of degree $1$ (Definition \ref{defgen}). If $\AAA$ and $\BBB$ are generated in degree $1$, then the functor $\Delta$ from \eqref{delta} is actually an LC functor (Theorem \ref{thmmaintails}). When applied to projective schemes $X$ and $Y$, by looking at the $\Z$-algebras associated to defining graded algebras which are generated in degree $1$, we obtain the following formula (Theorem \ref{thmscheme}):
\begin{equation}\label{eqscheme}
\Qch(X) \boxtimes \Qch(Y) = \Qch(X \times Y).
\end{equation}
Formula \eqref{eqscheme} is expected to hold in greater generality, at least for schemes and suitable stacks, which is work in progress.

In \S \ref{parlocpres}, we discuss the relation of our tensor product with other tensor products of categories in the literature. The existence of a tensor product of locally presentable categories goes back to \cite{kelly2}, \cite{kelly1} and features in \cite{adamekrosicky}, \cite{brandenburgchirvasitujohnsonfreyd}, \cite{cavigliahorel}, \cite{chirvasitujohnsonfreyd}. It is well known that Grothendieck categories are locally presentable. For locally $\alpha$-presentable Grothendieck categories, we use canonical sheaf representations in terms of the sites of $\alpha$-presentable objects in order to calculate our tensor product, and we show that it coincides with the tensor product as locally presentable categories. In particular, the tensor product is again locally $\alpha$-presentable. As a special case, we observe that locally finitely presentable Grothendieck categories are preserved under tensor product. In contrast, the stronger property of local coherence, which imposes the category of finitely presented objects to be abelian, is \emph{not} preserved under tensor product, as is already seen for rings. Hence, one can view the tensor product of Grothendieck categories as a solution, within the framework of abelian categories, to the non-existence, in general, of the Deligne tensor product of small abelian categories. Indeed, it was shown by L\'opez Franco in \cite{franco} that the Deligne tensor product of abelian categories $\aaa$ and $\bbb$ from \cite{deligne1} exists precisely when the finitely cocomplete tensor product $\aaa \otimes_{\mathrm{fp}} \bbb$ is abelian, and this is the case precisely when the tensor product $\Lex(\aaa) \boxtimes \Lex(\bbb)$ is locally coherent. As suggested to us by Henning Krause, we further examine the situation in terms of an $\alpha$-Deligne tensor product of $\alpha$-cocomplete abelian categories, showing that every tensor product of Grothendieck categories is accompanied by a parallel $\alpha$-Deligne tensor product of its categories of $\alpha$-presented objects for sufficiently large $\alpha$.

Our tensor product can be seen as a $k$-linear counterpart to the \emph{product} of Grothendieck toposes which is described by Johnstone in \cite{johnstone}, and its relation with the tensor product of locally presentable categories is to some extent parallel to Pitts' work in \cite{pitts}. We should note however that unlike in the case of toposes, working over $\Mod(k)$ rather than over $\Set$, the tensor product does not describe a $2$-categorical product, but instead introduces a $2$-categorical monoidal structure.  Futher, the functoriality properties we prove open up the possibility of describing a suitable monoidal $2$-category of Grothendieck categories as a $2$-localization of a monoidal $2$-category of sites at the class of LC functors. This idea applies equally well to the $\Set$-based setup. The details will appear in \cite{juliathesis}.

A combination of Pitts' approach and our description of the tensor product in terms of localizing Serre subcategories from \S \ref{partensorlocsub} leads to a natural tensor product for 
well-generated algebraic triangulated categories. The main idea is briefly sketched in \S \ref{parfuture}, its development is work in progress \cite{juliathesis}.

%
% which stand in relation to derived categories of differential graded algebras like Grothendieck categories stand in relation to module categories according to \cite{porta}.
%
%Although there are different approaches to localization theory of module ca\-tegories, which can in principle be used to formulate the definition of the tensor product of Grothendieck categories, the technical work in our paper mainly uses topologies and sheaves. In \cite{porta}, Porta proves a Gabri\"el-Popescu theorem for well-generated algebraic triangulated categories, which stand in relation to derived categories of differential graded algebras like Grothendieck categories stand in relation to module categories. However, in the localization theory of triangulated categories, an approach using sheaves is not immediately available. On the other hand, presentations based upon local $\alpha$-presentability in the context of well-generated algebraic triangulated categories are extensively used. It is our goal to relate these to a tensor product $\boxtimes$ in terms of thick subcategories. The definition should also be related to the tensor product of locally presentable infinity categories \cite[\S 4.1]{lurie}. Further, the definition should be such that for Grothendieck categories $\ccc$ and $\ddd$ which are flat in a suitable sense (for instance in the sense of \cite{lowenvandenberghab}), we have $D(\ccc) \boxtimes D(\ddd) = D(\ccc \boxtimes \ddd)$. The development of these ideas is work in progress.

\vspace{0,5cm}

\noindent \emph{Acknowledgement.} The authors are very grateful to Henning Krause for suggesting the definition of an $\alpha$-version of the Deligne tensor product of abelian categories, which we have worked out in \S \ref{paralphadel} of the current version of the paper. We are also grateful for the input from an anonymous referee, who pointed out an approach discussed in \S \ref{parfuture}. We further thank Pieter Belmans for pointing out reference \cite{brandenburgchirvasitujohnsonfreyd} and Fran\c{c}ois Petit for pointing out reference \cite[\S 4.1]{lurie}.

\section{Tensor product of linear sites}\label{parpartensorlin}

Throughout, let $k$ be a commutative ground ring. For a $k$-linear category $\AAA$, we put $\Mod(\AAA) = \Fun_k(\AAA^{\op}, \Mod(k))$, the category of $k$-linear functors from $\AAA^{\op}$ to the category $\Mod(k)$ of $k$-modules.
Consider two $k$-linear categories $\AAA$ and $\BBB$, with tensor product $\AAA \otimes \BBB = \AAA \otimes_k \BBB$. The starting point for our quest for a tensor product $\boxtimes$ between Grothendieck abelian categories is the requirement that for module categories $\Mod(\AAA)$ and $\Mod(\BBB)$, we should have
\begin{equation}\label{eqmodlin}
\Mod(\AAA) \boxtimes \Mod(\BBB) = \Mod(\AAA \otimes \BBB).
\end{equation}
If we want to extend this principle to localizations of module categories, we should find a way of associating, to given localizations of $\Mod(\AAA)$ and $\Mod(\BBB)$, a new localization of $\Mod(\AAA \otimes \BBB)$. In this section, we detail three natural ways of doing this, based upon the following three isomorphic posets associated to the localization theory of $\Mod(\CCC)$ for a linear category $\CCC$ (see \S \ref{parthree}):
\begin{enumerate}
\item The poset $T$ of linear topologies on $\CCC$;
\item The poset $W$ of localizing Serre subcategories of $\Mod(\CCC)$;
\item The opposite poset $L^{\op}$ of the poset $L$ of strict localizations of $\Mod(\CCC)$.
\end{enumerate}
More precisely, taking $\CCC = \AAA \otimes \BBB$:
\begin{enumerate}
\item To topologies $\ttt_{\AAA}$ on $\AAA$ and $\ttt_{\BBB}$ on $\BBB$, we associate ``one-sided'' topologies $\ttt_1$ (induced by $\ttt_{\AAA}$) and $\ttt_2$ (induced by $\ttt_{\BBB}$) on $\AAA \otimes \BBB$, and we put $\ttt_{\AAA} \boxtimes \ttt_{\BBB} = \ttt_1 \vee \ttt_2$ in $T$ (see \S \ref{partensortop}). 
\item To localizing Serre subcategories $\www_{\AAA} \subseteq \Mod(\AAA)$ and $\www_{\BBB} \subseteq \Mod(\BBB)$, we associate the localizing Serre subcategories $\www_1, \www_2 \subseteq \Mod(\AAA \otimes \BBB)$ of objects which are in $\www_{\AAA}$ (resp. $\www_{\BBB}$) in the first (resp. second) variable, and we put $\www_{\AAA} \boxtimes \www_{\BBB} = \www_1 \vee \www_2$ in $W$ (see \S \ref{partensorlocsub}). An explicit description is based upon the construction of semilocalizing hull from \S \ref{parpreloc}.
\item To strict localizations $\LLL_{\AAA} \subseteq \Mod(\AAA)$ and $\LLL_{\BBB} \subseteq \Mod(\BBB)$, we associate the strict localizations $\LLL_1, \LLL_2 \subseteq \Mod(\AAA \otimes \BBB)$ of objects which are in $\LLL_{\AAA}$ (resp. $\LLL_{\BBB}$) in the first (resp. second) variable, and we put $\LLL_{\AAA} \boxtimes \LLL_{\BBB} = \LLL_1 \wedge \LLL_2$ in $L$ (see \S \ref{partensorstrict}). Using the relation between $W$ and $L$, one sees that actually $\LLL_{\AAA} \boxtimes \LLL_{\BBB} = \LLL_1 \cap \LLL_2$. \end{enumerate}
From the order theoretic definitions of $\ttt_{\AAA} \boxtimes \ttt_{\BBB}$, $\www_{\AAA} \boxtimes \www_{\BBB}$ and $\LLL_{\AAA} \boxtimes \LLL_{\BBB}$, we conclude that in order to establish that they correspond under the isomorphisms between $T$, $W$ and $L^{\op}$, it suffices to establish the claim for $\ttt_{\AAA}$, $\www_{\AAA}$ and $\LLL_{\AAA}$ (and similarly for $\ttt_{\BBB}$, $\www_{\BBB}$ and $\LLL_{\BBB}$). This is done in \S \ref{partensorrel}. 

An application to Quillen exact categories recovers notions from \cite{kaledinlowen} which inspired our definitions, as discussed in \S \ref{parparex}.

\subsection{Linear sites}\label{parlin}
We will use the terminology and notations from \cite[\S 2]{lowenlin}. Let $k$ be a commutative ground ring and let $\AAA$ be a small $k$-linear category. Every object $A \in \AAA$ determines a representable $\AAA$-module
$$\AAA(-,A): \AAA^{\op} \lra \Mod(k): B \longmapsto \AAA(B,A).$$
A \emph{sieve on $A$} is a submodule $R \subseteq \AAA(-,A)$. A \emph{cover system} $\rrr$ on $\AAA$ consists of specifying, for every $A \in \AAA$, a collection $\rrr(A)$ of sieves on $A$, called \emph{covering sieves on $A$} or simply \emph{covers of $A$}. 
One can list a number of properties a cover system can satisfy, as is done in \cite[\S 2.2]{lowenlin}. The most important properties are the \emph{identity axiom}, the \emph{pullback axiom}, and the \emph{glueing axiom}. If $\rrr$ satisfies the identity and pullback axioms, it is called a \emph{localizing system}. If it moreover satisfies the glueing axiom, it is called a \emph{topology}. 
Hence, what we call a topology is the $k$-linear counterpart of the notion of a Grothendieck topology.

Note that the intersection of a collection of topologies on $\AAA$ remains a topology, and $\AAA$ can be endowed with the \emph{discrete} topology for which every sieve is covering. Hence, for an arbitrary cover system $\rrr$ on $\AAA$, there exists a smallest topology $\langle \rrr \rangle_{\mathrm{top}}$ on $\AAA$ with $\rrr \subseteq \langle \rrr \rangle_{\mathrm{top}}$. If $\rrr$ is localizing, an explicit description of $\langle \rrr \rangle_{\mathrm{top}}$ is available (see \cite[\S 2.2]{lowenlin}). Consequently, the poset $T$ of topologies on $\AAA$ ordered by inclusion is a complete lattice with $\inf_i \ttt_i = \cap_{i} \ttt_i$ and $\sup_i \ttt_i = \langle \cup_i \ttt_i \rangle_{\mathrm{top}}$.

\subsection{Semilocalizing subcategories}\label{parpreloc}
Let $\ccc$ be a Grothendieck category. Recall that a \emph{localizing Serre} subcategory (\emph{localizing} subcategory for short) $\www \subseteq \ccc$ is a full subcategory closed under subquotients, extensions and coproducts. We will call a full subcategory $\www \subseteq \ccc$ \emph{semilocalizing} if it is closed under extensions and coproducts. It follows in particular that a semilocalizing subcategory $\www$ is closed under filtered colimits. As the intersection of semilocalizing (resp. localizing) subcategories is again such, for every full subcategory $\hhh \subseteq \ccc$ there is a smallest semilocalizing subcategory $\langle \hhh \rangle_{\mathrm{sloc}}$ with $\hhh \subseteq \langle \hhh \rangle_{\mathrm{sloc}}$, the \emph{semilocalizing hull} of $\hhh$, and a smallest localizing subcategory $\langle \hhh \rangle_{\mathrm{loc}}$ with $\hhh \subseteq \langle \hhh \rangle_{\mathrm{loc}}$, the \emph{localizing hull} of $\hhh$. In particular, the poset $W$ of localizing subcategories of $\ccc$ is a complete lattice with $\inf_i \www_i = \cap_i \www_i$ and $\sup_i \www_i = \langle \cup_i \www_i \rangle_{\mathrm{loc}}$.
In this section we give an explicit description of $\langle \hhh \rangle_{\mathrm{sloc}}$.

\begin{definition}
Consider $\hhh \subseteq \Ob(\ccc)$ and $C \in \ccc$. An \emph{ascending filtration} of $C$ consists of an ordinal $\alpha$ and a collection of subobjects $(M_{\beta})_{\beta \leq \alpha}$ of $C$ such that $M_0 = 0$, $i \leq j$ implies $M_i \subseteq M_j$, $M_{\beta} = \cup_{\gamma < \beta} M_{\gamma}$ if $\beta$ is a limit ordinal, and $M_{\alpha} = C$. An ascending filtration $(M_{\beta})_{\beta \leq \alpha}$ of $C$ is called an \emph{$\hhh$-filtration} provided that $M_{\beta+1}/M_{\beta} \in \hhh$ for all $\beta < \alpha$, and in this case $C$ is called \emph{$\hhh$-filtered}.
\end{definition}

\begin{proposition}\label{propfilt}
For $\hhh \subseteq \ccc$, $\langle \hhh \rangle_{\mathrm{sloc}}$ is the full subcategory of all $\hhh$-filtered objects.
\end{proposition}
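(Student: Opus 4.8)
The plan is to prove the two inclusions separately; throughout, write $\fff$ for the full subcategory of $\hhh$-filtered objects of $\ccc$. \emph{Step 1 ($\fff \subseteq \langle \hhh \rangle_{\mathrm{sloc}}$).} It suffices to show that an arbitrary semilocalizing subcategory $\www$ with $\hhh \subseteq \www$ contains every $\hhh$-filtered object; applying this to $\www = \langle \hhh \rangle_{\mathrm{sloc}}$ gives the inclusion. Given an $\hhh$-filtration $(M_\beta)_{\beta \le \alpha}$ of $C$, one argues by transfinite induction that $M_\beta \in \www$ for all $\beta \le \alpha$. The base case $M_0 = 0$ holds since $0$ is the empty coproduct. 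For a successor $\beta+1$, the sequence $0 \to M_\beta \to M_{\beta+1} \to M_{\beta+1}/M_\beta \to 0$ has $M_\beta \in \www$ by induction and $M_{\beta+1}/M_\beta \in \hhh \subseteq \www$, so $M_{\beta+1} \in \www$ by closure under extensions. For a limit ordinal $\beta$, $M_\beta = \bigcup_{\gamma < \beta} M_\gamma = \colim_{\gamma < \beta} M_\gamma$ is a filtered colimit of objects of $\www$, hence lies in $\www$ because, as observed above, semilocalizing subcategories are closed under filtered colimits. Thus $C = M_\alpha \in \www$.

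\emph{Step 2 ($\fff$ is semilocalizing and contains $\hhh$).} That $\hhh \subseteq \fff$ and $0 \in \fff$ is immediate, using filtrations of length $1$ and $0$ respectively. For closure under extensions, given $0 \to C' \xrightarrow{i} C \xrightarrow{q} C'' \to 0$ with $\hhh$-filtrations $(M'_\beta)_{\beta \le \alpha'}$ of $C'$ and $(M''_\gamma)_{\gamma \le \alpha''}$ of $C''$, one splices them: set $M_\beta = i(M'_\beta)$ for $\beta \le \alpha'$ and $M_{\alpha'+\gamma} = q^{-1}(M''_\gamma)$ for $\gamma \le \alpha''$. This is an ascending filtration of $C$ (continuity at limit ordinals in the second block uses that preimage along $q$ commutes with directed unions of subobjects, which holds in the Grothendieck category $\ccc$ by AB5), its first block has subquotients $M'_{\beta+1}/M'_\beta \in \hhh$, and, since $\ker q = i(C') = M_{\alpha'}$, its second block has subquotients $q^{-1}(M''_{\gamma+1})/q^{-1}(M''_\gamma) \cong M''_{\gamma+1}/M''_\gamma \in \hhh$; hence $C \in \fff$. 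For closure under coproducts, given $\bigoplus_{j \in J} C_j$ with each $C_j \in \fff$, one well-orders $J = \{ j_\mu \mid \mu < \sigma \}$, fixes an $\hhh$-filtration $(M^\mu_\beta)_{\beta \le \alpha_\mu}$ of each $C_{j_\mu}$, and concatenates them into a filtration indexed by the ordinal sum $\lambda = \sum_{\mu < \sigma} \alpha_\mu$: writing $\delta < \lambda$ uniquely as $\delta = \bigl( \sum_{\nu < \mu} \alpha_\nu \bigr) + \beta$ with $\beta < \alpha_\mu$, set $P_\delta = \bigl( \bigoplus_{\nu < \mu} C_{j_\nu} \bigr) \oplus M^\mu_\beta$, regarded as a subobject of $\bigoplus_j C_j$, and $P_\lambda = \bigoplus_j C_j$. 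One then checks that $P_0 = 0$, that $(P_\delta)$ is ascending and continuous at limit ordinals — using that $\bigoplus_j C_j$ is the directed union of its sub-coproducts over the initial segments of $J$ — and that each consecutive subquotient is either some $M^\mu_{\beta+1}/M^\mu_\beta \in \hhh$ or, at a transition from block $\mu$ to block $\mu+1$ with $\alpha_\mu$ a successor, is isomorphic to $C_{j_\mu}/M^\mu_\beta = M^\mu_{\alpha_\mu}/M^\mu_\beta \in \hhh$; when $\alpha_\mu$ is a limit ordinal the transition point is itself a limit ordinal of the big filtration, where only continuity — already checked — is needed. Hence $\bigoplus_j C_j \in \fff$.

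Combining the two steps, $\fff$ is a semilocalizing subcategory containing $\hhh$, so $\langle \hhh \rangle_{\mathrm{sloc}} \subseteq \fff$, while Step 1 gives $\fff \subseteq \langle \hhh \rangle_{\mathrm{sloc}}$; therefore $\langle \hhh \rangle_{\mathrm{sloc}} = \fff$. The main obstacle is the closure of $\fff$ under coproducts: the concatenation construction is natural, but the ordinal bookkeeping — verifying continuity at limit ordinals and correctly treating the block-transition subquotients for both successor and limit filtration lengths $\alpha_\mu$ — is the one genuinely delicate point, the closure under extensions being the comparatively routine two-block case of the same idea.
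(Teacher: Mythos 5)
Your proposal is correct and takes essentially the same route as the paper's proof: the same transfinite induction (invoking at limit ordinals the closure of semilocalizing subcategories under filtered colimits, as stated just before the proposition) shows that every $\hhh$-filtered object lies in any semilocalizing $\www \supseteq \hhh$, and the same splicing, respectively concatenation, of filtrations shows that the $\hhh$-filtered objects form a semilocalizing subcategory containing $\hhh$. The only difference is presentational: for coproducts you write down the concatenated filtration in one stroke via the decomposition $\delta = \bigl(\sum_{\nu<\mu}\alpha_\nu\bigr)+\beta$, whereas the paper constructs exactly the same filtration by transfinite recursion, refining the partial-sum filtration $D_{\beta+1}=D_\beta\oplus C_{\beta+1}$.
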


\begin{proof}
Suppose first that $\hhh \subseteq \www$ for $\www$ semilocalizing. Consider an object $C \in \ccc$ with $\hhh$-filtration $(M_{\beta})_{\beta \leq \alpha}$. We show by transfinite induction that every $M_{\beta} \in \www$. The statement is true for $M_0 = 0$. Suppose $M_{\beta} \in \www$. For $M_{\beta +1}$ we have an exact sequence $0 \lra M_{\beta} \lra M_{\beta+1} \lra M_{\beta+1}/M_{\beta} \lra 0$ so since $\www$ is closed under extensions we have $M_{\beta+1} \in \www$. For a limit ordinal $\beta$, we have $M_{\beta} \in \www$ since $\www$ is closed under filtered colimits.

Next we prove that the full subcategory of $\hhh$-filtered objects is semilocalizing. Consider a coproduct $C = \oplus_{i \in I} C_i$. We may safely assume that the coproduct is indexed by successor ordinals, that is $C = \oplus_{\gamma + 1 < \alpha} C_{\gamma + 1}$ for an ordinal $\alpha$. We put $C_{\alpha} = 0$.  We inductively define an ascending filtration $(D_{\beta})_{\beta \leq \alpha}$ of $C$ with $D_0 = 0$. For a successor ordinal $\gamma + 1 \leq \alpha$, we put $D_{\gamma + 1} = D_{\gamma} \oplus C_{\gamma + 1}$ and for a limit ordinal $\beta \leq \alpha$ we put $D_{\beta} = \cup_{\gamma < \beta} D_{\gamma}$. Note that $D_{\alpha} = C$. 

By assumption, every $C_{\beta + 1}$ with $\beta + 1 < \alpha$ has an $\hhh$-filtration $(M^{\beta + 1}_{\gamma})_{\gamma \leq \alpha_{\beta + 1}}$ for some ordinal $\alpha_{\beta + 1}$. By transfinite induction on $\alpha$ we construct for every $D_{\beta}$ with $\beta \leq \alpha$ an $\hhh$-filtration refining the chosen $\hhh$-filtrations of the $D_{\gamma}$ with $\gamma < \beta$. We have the filtration $(D_0)_{0}$ for $D_0 = 0$. Suppose a $\hhh$-filtration $(P^{\beta}_{\gamma})_{\gamma \leq \theta_{\beta}}$ is chosen for $D_{\beta}$ with $\theta_{\beta}$ some ordinal. We have $D_{\beta + 1} = D_{\beta} \oplus C_{\beta + 1}$. We consider the ordinal sum $\theta_{\beta + 1} = \theta_{\beta} + \alpha_{\beta}$. We join the two $\hhh$-filtrations together into an $\hhh$-filtration $(P^{\beta + 1}_{\gamma})_{\gamma \leq \theta_{\beta + 1}}$ with $P^{\beta + 1}_{\gamma} = P^{\beta}_{\gamma}$ for $\gamma \leq \theta_{\beta}$ and $P^{\beta + 1}_{\theta_{\beta} + \gamma} = D_{\beta} \oplus M^{\beta + 1}_{\gamma}$ for $\gamma \leq \alpha_{\beta}$. For a limit ordinal $\beta \leq \alpha$, we put $\theta_{\beta} = \cup_{\gamma < \beta} \theta_{\gamma}$. We construct an $\hhh$-filtration $(P^{\beta}_{\zeta})_{\zeta \leq \theta_{\beta}}$ of $D_{\beta}$. For $\zeta < \theta_{\beta}$, there exists $\gamma < \beta$ with $\zeta < \theta_{\gamma}$, and we put $P^{\beta}_{\zeta} = P^{\gamma}_{\zeta}$. This is well defined by construction. We further put $P^{\beta}_{\theta_{\beta}} = D_{\beta}$. 

Next, consider an exact sequence 
$$\xymatrix{ 0 \ar[r] & {C'} \ar[r]_f & C \ar[r]_g & {C''} \ar[r] & 0}$$
in $\ccc$ and $\hhh$-filtrations $(M_{\beta})_{\beta \leq \alpha'}$ of $C'$ and $(N_{\beta})_{\beta \leq \alpha''}$ of $C''$. For the ordinal sum $\alpha = \alpha' + \alpha''$, we obtain an ascending filtration $(P_{\gamma})_{\gamma \leq \alpha}$ of $C$ with $P_{\gamma} = f(M_{\gamma})$ for $\gamma \leq \alpha'$ and $P_{\alpha' + \gamma} = g^{-1}(N_{\gamma})$ for $\gamma \leq \alpha''$. Note that we have $P_{\alpha'} = f(M_{\alpha'}) = g^{-1}(N_0)$ as desired since the sequence is exact. Further, we have $g^{-1}(N_{\gamma +1})/g^{-1}N_{\gamma} \cong N_{\gamma + 1}/N_{\gamma} \in \hhh$ which finishes the proof.
\end{proof}

\begin{proposition}\label{propsubquot}
If $\hhh \subseteq \ccc$ is closed under subobjects (resp. quotient objects), then the same holds for $\langle \hhh \rangle_{\mathrm{sloc}}$. In particular, if $\hhh$ is closed under subquotients, then $\langle \hhh \rangle_{\mathrm{sloc}}$ is localizing and hence $\langle \hhh \rangle_{\mathrm{sloc}} = \langle \hhh \rangle_{\mathrm{loc}}$.
\end{proposition}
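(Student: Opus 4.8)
The plan is to invoke the explicit description from Proposition~\ref{propfilt}: since $\langle \hhh \rangle_{\mathrm{sloc}}$ is the full subcategory of $\hhh$-filtered objects, it suffices to show that a subobject (resp.\ quotient object) of an $\hhh$-filtered object is again $\hhh$-filtered, whenever $\hhh$ is closed under subobjects (resp.\ quotient objects). For this one transports a given $\hhh$-filtration along the two basic operations on subobjects: intersection with a fixed subobject, and direct image along a fixed epimorphism.

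For subobjects, let $C$ carry an $\hhh$-filtration $(M_{\beta})_{\beta \le \alpha}$ and let $C' \subseteq C$. Set $M'_{\beta} = M_{\beta} \cap C'$. Then $M'_0 = 0$, the family is ascending, and $M'_{\alpha} = C'$. The limit-ordinal axiom $M'_{\beta} = \bigcup_{\gamma<\beta} M'_{\gamma}$ is precisely the assertion that $- \cap C'$ commutes with the directed union $\bigcup_{\gamma<\beta} M_{\gamma} = M_{\beta}$, which holds because $\ccc$ is a Grothendieck category (axiom AB5). By the second isomorphism theorem $M'_{\beta+1}/M'_{\beta}$ is isomorphic to a subobject of $M_{\beta+1}/M_{\beta} \in \hhh$, hence lies in $\hhh$ since $\hhh$ is closed under subobjects. (Should some of these quotients vanish this is harmless: $0 \in \hhh$ as soon as $\hhh \neq \emptyset$, the empty case being trivial; alternatively one contracts the filtration.) Thus $(M'_{\beta})_{\beta}$ is an $\hhh$-filtration of $C'$.

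For quotients, let $g : C \to C''$ be an epimorphism and set $N_{\beta} = g(M_{\beta})$. As before $N_0 = 0$, the family ascends, $N_{\alpha} = C''$, and the limit-ordinal axiom $N_{\beta} = \bigcup_{\gamma<\beta} N_{\gamma}$ follows from the fact that taking direct images commutes with directed unions. The canonical epimorphism $M_{\beta+1}/M_{\beta} \to N_{\beta+1}/N_{\beta}$ realizes each successive subquotient of $(N_{\beta})_{\beta}$ as a quotient of $M_{\beta+1}/M_{\beta} \in \hhh$, hence it lies in $\hhh$ since $\hhh$ is closed under quotient objects. So $C''$ is $\hhh$-filtered.

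Finally, for the ``in particular'' part: if $\hhh$ is closed under subquotients, then by the above $\langle\hhh\rangle_{\mathrm{sloc}}$ is closed under subobjects and quotient objects, hence under subquotients, and it is by construction closed under extensions and coproducts; therefore it is a localizing subcategory. Since every localizing subcategory is semilocalizing, $\langle\hhh\rangle_{\mathrm{loc}}$ is semilocalizing and contains $\hhh$, so $\langle\hhh\rangle_{\mathrm{sloc}} \subseteq \langle\hhh\rangle_{\mathrm{loc}}$; conversely $\langle\hhh\rangle_{\mathrm{sloc}}$ is now localizing and contains $\hhh$, so $\langle\hhh\rangle_{\mathrm{loc}} \subseteq \langle\hhh\rangle_{\mathrm{sloc}}$, and the two coincide. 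The only point requiring care is the verification of the limit-ordinal condition for the transported filtrations --- that is, the compatibility of $- \cap C'$ and of $g(-)$ with directed unions --- which is exactly where the Grothendieck hypothesis on $\ccc$ is used; the possibly-zero successive subquotients are a mere bookkeeping nuisance.
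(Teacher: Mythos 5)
Your argument is correct and matches the paper's proof in essence: the paper likewise transports an $\hhh$-filtration $(M_\beta)_\beta$ of $C$ to $(f^{-1}(M_\beta))_\beta$ on a subobject and $(g(M_\beta))_\beta$ on a quotient, using the canonical monomorphisms $f^{-1}(M_{\beta+1})/f^{-1}(M_\beta) \lra M_{\beta+1}/M_\beta$ and epimorphisms $M_{\beta+1}/M_\beta \lra g(M_{\beta+1})/g(M_\beta)$. Your explicit verification of the limit-ordinal condition via AB5, and the spelled-out deduction of the ``in particular'' statement, are details the paper leaves implicit but are exactly the intended justification.
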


\begin{proof}
Consider an exact sequence 
$$\xymatrix{ 0 \ar[r] & {C'} \ar[r]_f & C \ar[r]_g & {C''} \ar[r] & 0}$$
in $\ccc$ and an $\hhh$-filtration $(M_{\beta})_{\beta \leq \alpha}$ of $C$.
We obtain an ascending filtration $(f^{-1}(M_{\beta}))_{\beta \leq \alpha}$ of $C'$ with canonical monomorphisms $f^{-1}(M_{\beta +1})/f^{-1}(M_{\beta}) \lra M_{\beta+1}/M_{\beta}$. Hence, if $\hhh$ is closed under subobjects, this is an $\hhh$-filtration of $C'$. We obtain an ascending filtration $(g(M_{\beta}))_{\beta \leq \alpha}$ of $C''$ with canonical epimorphisms $M_{\beta +1}/M_{\beta} \lra g(M_{\beta + 1})/g(M_{\beta})$. Hence, if $\hhh$ is closed under quotient objects, this is an $\hhh$-filtration of $C''$. 
\end{proof}

\begin{corollary}\label{corWsup}
In the lattice $W$ of localizing subcategories of $\ccc$, we have $\sup_i \www_i = \langle \cup_i \www_i \rangle_{\mathrm{sloc}}$.
\end{corollary}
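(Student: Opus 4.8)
The plan is to combine Proposition~\ref{propfilt} with Proposition~\ref{propsubquot}, reducing the computation of the supremum in $W$ to a computation of a semilocalizing hull. First I would observe the two inclusions separately. For ``$\supseteq$'': the union $\cup_i \www_i$ is contained in every localizing subcategory containing all the $\www_i$, and such a localizing subcategory is in particular semilocalizing, hence contains $\langle \cup_i \www_i \rangle_{\mathrm{sloc}}$; since $\sup_i \www_i$ is by definition the smallest such localizing subcategory, we get $\langle \cup_i \www_i \rangle_{\mathrm{sloc}} \subseteq \sup_i \www_i$ --- wait, this gives the wrong direction, so let me instead argue ``$\subseteq$'' this way: $\sup_i \www_i = \langle \cup_i \www_i \rangle_{\mathrm{loc}}$ by the description of the lattice $W$ recalled in \S\ref{parpreloc}, and $\langle \cup_i \www_i \rangle_{\mathrm{sloc}} \subseteq \langle \cup_i \www_i \rangle_{\mathrm{loc}}$ trivially since every localizing subcategory is semilocalizing. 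So it remains to prove $\langle \cup_i \www_i \rangle_{\mathrm{sloc}} \supseteq \langle \cup_i \www_i \rangle_{\mathrm{loc}} = \sup_i \www_i$, which by minimality of the localizing hull follows once we show that $\langle \cup_i \www_i \rangle_{\mathrm{sloc}}$ is itself localizing.

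The key step, then, is to verify that $\langle \hhh \rangle_{\mathrm{sloc}}$ is localizing when $\hhh = \cup_i \www_i$ is a union of localizing subcategories. By Proposition~\ref{propsubquot}, it suffices to check that $\hhh$ is closed under subquotients: then $\langle \hhh \rangle_{\mathrm{sloc}}$ is localizing and equals $\langle \hhh \rangle_{\mathrm{loc}}$. But this is immediate: if $C \in \hhh$, then $C \in \www_i$ for some $i$, and since $\www_i$ is localizing it is closed under subquotients, so any subquotient of $C$ lies in $\www_i \subseteq \hhh$. This gives $\langle \cup_i \www_i \rangle_{\mathrm{sloc}} = \langle \cup_i \www_i \rangle_{\mathrm{loc}} = \sup_i \www_i$, which is exactly the claim.

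I do not expect any real obstacle here; the corollary is essentially a formal consequence of the preceding two propositions, the only point requiring (trivial) care being the direction of the inclusions and the invocation of Proposition~\ref{propsubquot} to upgrade the semilocalizing hull of a subquotient-closed class to a localizing subcategory. One could alternatively spell out the argument without Proposition~\ref{propsubquot} by directly checking, via Proposition~\ref{propfilt}, that an $\hhh$-filtered object has all its subquotients $\hhh$-filtered when $\hhh$ is subquotient-closed --- but since that is precisely the content of Proposition~\ref{propsubquot}, it is cleanest to just cite it.
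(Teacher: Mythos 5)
Your proposal is correct and is exactly the argument the paper intends: the corollary is stated without proof precisely because it follows from Proposition~\ref{propsubquot} applied to $\hhh = \cup_i \www_i$, which is closed under subquotients since each $\www_i$ is, together with the identification $\sup_i \www_i = \langle \cup_i \www_i \rangle_{\mathrm{loc}}$ recalled in \S\ref{parpreloc}. The only blemish is the false start on the direction of the inclusions, which you correct yourself; the final argument is complete.
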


\begin{remark}
Recall that two (localizing) Serre subcategories $\www_1$, $\www_2$ are \emph{compatible} if $\www_1 \ast \www_2 = \www_2 \ast \www_1$ for the Gabri\"el product 
$$\www_1 \ast \www_2 = \{ C \in \ccc \,\, |\,\,  \exists W_1 \in \www_1, W_2 \in \www_2, \,\, 0 \lra W_1 \lra C \lra W_2 \lra 0 \}.$$
and in this case $\www_1 \ast \www_2$ is the smallest (localizing) Serre subcategory containing $\www_1$ and $\www_2$. 
Note that in general, by Proposition \ref{propfilt} we have $\www_1 \ast \www_2 \subseteq \langle \www_1 \cup \www_2 \rangle_{\mathrm{sloc}}$, and we have equality if and only if $\www_1$ and $\www_2$ are compatible.
\end{remark}

To end this section we describe the relation with orthogonal complements. Recall that an object $C$ is \emph{left orthogonal} to an object $D$ and $D$ is \emph{right orthogonal} to $C$ (notation $C \perp D$) provided that $\Ext^0_{\ccc}(C,D) = 0 = \Ext^1_{\ccc}(C,D)$.
For a full subcategory $\hhh \subseteq \ccc$, we obtain the following full subcategories of $\ccc$:
\begin{itemize}
\item $\hhh^{\perp} =  \{ C \in \ccc \,\, |\,\, H \perp C \,\,\, \forall H \in \hhh \};$
\item $^{\perp}\hhh =  \{ C \in \ccc \,\, |\,\, C \perp H \,\,\, \forall H \in \hhh \}$
\end{itemize}
which are called the \emph{right orthogonal complement} and the \emph{left orthogonal complement} of $\hhh$ respectively.

\begin{proposition}
For a full subcategory $\hhh \subseteq \ccc$, the left orthogonal $^{\perp} \hhh$ is semilocalizing. 
\end{proposition}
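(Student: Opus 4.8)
The plan is to verify that $^{\perp}\hhh$ is closed under extensions and under coproducts, since these are exactly the two defining conditions for a semilocalizing subcategory. The key observation is that both $\Ext^0_{\ccc}(-,D) = \Hom_{\ccc}(-,D)$ and $\Ext^1_{\ccc}(-,D)$ behave well with respect to short exact sequences and coproducts in the first variable, and $^{\perp}\hhh$ is defined by the simultaneous vanishing of these two functors against all $H \in \hhh$.

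First I would treat extensions. Given a short exact sequence $0 \lra C' \lra C \lra C'' \lra 0$ with $C', C'' \in {}^{\perp}\hhh$ and an arbitrary $H \in \hhh$, I apply the long exact sequence for $\Ext^*_{\ccc}(-,H)$ to obtain
$$\xymatrix@C=1.6em{ 0 \ar[r] & \Ext^0(C'',H) \ar[r] & \Ext^0(C,H) \ar[r] & \Ext^0(C',H) \ar[r] & \Ext^1(C'',H) \ar[r] & \Ext^1(C,H) \ar[r] & \Ext^1(C',H).}$$
Since $\Ext^0(C',H) = \Ext^0(C'',H) = 0$ and $\Ext^1(C',H) = \Ext^1(C'',H) = 0$, exactness forces $\Ext^0(C,H) = 0$ and $\Ext^1(C,H) = 0$; as $H$ was arbitrary, $C \in {}^{\perp}\hhh$. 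Here one only needs the existence of a five-term (or longer) exact sequence involving $\Ext^0$ and $\Ext^1$ in a Grothendieck category, which is standard (Yoneda $\Ext$, or derived functors of $\Hom$, which exist since $\ccc$ has enough injectives).

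Next I would treat coproducts. For a family $(C_i)_{i \in I}$ in $^{\perp}\hhh$ and any $H \in \hhh$, the natural maps give
$$\Ext^0_{\ccc}\Bigl(\bigoplus_{i} C_i, H\Bigr) \cong \prod_{i} \Ext^0_{\ccc}(C_i, H) = 0,$$
the isomorphism being the universal property of the coproduct. For $\Ext^1$ one wants $\Ext^1_{\ccc}(\bigoplus_i C_i, H) \cong \prod_i \Ext^1_{\ccc}(C_i, H)$; this holds because $\bigoplus_i(-)$ is exact in a Grothendieck category, so it sends an injective resolution $H \to I^{\bullet}$ componentwise and one computes $\Ext^*(\bigoplus_i C_i, H)$ via $\prod_i \Hom_{\ccc}(C_i, I^{\bullet})$, whose cohomology is $\prod_i \Ext^*(C_i, H)$ since products are exact in $\Ab$. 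Hence $\Ext^1(\bigoplus_i C_i, H) = 0$, and $\bigoplus_i C_i \in {}^{\perp}\hhh$.

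The only mild subtlety — and the point I would state carefully rather than the crux of any difficulty — is the commutation of $\Ext^1$ with coproducts in the first variable. This is not a general abelian-category phenomenon but does hold in a Grothendieck category precisely because filtered colimits (hence arbitrary coproducts) are exact, which is built into the definition of $\ccc$ recalled at the start of the paper. With that in hand the proof is complete; no appeal to Proposition \ref{propfilt} is actually needed, though one could alternatively phrase the coproduct step via the fact that $^{\perp}\hhh$ is visibly closed under the relevant colimits.
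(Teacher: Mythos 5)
Your proof is correct, and since the paper states this proposition without any proof (it is left as an easy verification), your argument is exactly the standard one that fills the gap: closure under extensions via the long exact $\Ext$-sequence, and closure under coproducts via $\Ext^n_{\ccc}(\oplus_i C_i, H) \cong \prod_i \Ext^n_{\ccc}(C_i, H)$ for $n = 0,1$. One small correction to your final remark: the commutation of $\Ext^1$ with coproducts in the first variable is not really a consequence of exactness of coproducts (AB5); the computation you yourself give only uses that $\ccc$ has enough injectives (so that $\Ext^n_{\ccc}(-,H)$ is computed from an injective resolution $H \lra I^{\bullet}$), the isomorphism $\Hom_{\ccc}(\oplus_i C_i, I^{\bullet}) \cong \prod_i \Hom_{\ccc}(C_i, I^{\bullet})$, and exactness of products in $\Mod(k)$ (or $\Ab$), all of which are available in any Grothendieck category — so the step you single out as the subtlety is in fact unproblematic, and no appeal to Proposition \ref{propfilt} is needed, as you say.
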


\begin{proposition}
Let $\hhh \subseteq \ccc$ be a full subcategory. We have $(\langle \hhh \rangle_{\mathrm{sloc}})^{\perp} = \hhh^{\perp}$.
\end{proposition}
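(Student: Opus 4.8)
**The plan is to prove the two inclusions $\hhh^{\perp} \subseteq (\langle \hhh \rangle_{\mathrm{sloc}})^{\perp}$ and $(\langle \hhh \rangle_{\mathrm{sloc}})^{\perp} \subseteq \hhh^{\perp}$ separately.** The second inclusion is immediate: since $\hhh \subseteq \langle \hhh \rangle_{\mathrm{sloc}}$, anything right orthogonal to all of $\langle \hhh \rangle_{\mathrm{sloc}}$ is in particular right orthogonal to all of $\hhh$. So the content is the first inclusion: I must show that if $C \in \ccc$ satisfies $H \perp C$ for all $H \in \hhh$, then $W \perp C$ for all $\hhh$-filtered objects $W$, using the explicit description of $\langle \hhh \rangle_{\mathrm{sloc}}$ from Proposition \ref{propfilt}.

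So fix $C \in \hhh^{\perp}$ and an $\hhh$-filtered object $W$ with $\hhh$-filtration $(M_{\beta})_{\beta \leq \alpha}$. The plan is a transfinite induction showing $M_{\beta} \perp C$ for every $\beta \leq \alpha$; taking $\beta = \alpha$ gives $W \perp C$. The base case $M_0 = 0$ is trivial. For the successor step, from the exact sequence $0 \to M_{\beta} \to M_{\beta+1} \to M_{\beta+1}/M_{\beta} \to 0$ with $M_{\beta+1}/M_{\beta} \in \hhh$, apply $\Hom_{\ccc}(-,C)$ to get a long exact sequence; the terms $\Ext^0(M_{\beta+1}/M_{\beta},C)$ and $\Ext^1(M_{\beta+1}/M_{\beta},C)$ vanish because $M_{\beta+1}/M_{\beta}\in\hhh$ and $C\in\hhh^{\perp}$, and $\Ext^0(M_{\beta},C)$, $\Ext^1(M_{\beta},C)$ vanish by the induction hypothesis, so $\Ext^0(M_{\beta+1},C) = 0 = \Ext^1(M_{\beta+1},C)$. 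The limit step is where the only real care is needed: for a limit ordinal $\beta$, $M_{\beta} = \colim_{\gamma < \beta} M_{\gamma}$ is a filtered (indeed directed) colimit of subobjects, and I need to commute $\Ext^i(-,C)$ past this colimit. For $i=0$ this is just that $\Hom_{\ccc}(\colim_\gamma M_\gamma, C) = \lim_\gamma \Hom_\ccc(M_\gamma, C)$ (a colimit in the first variable of Hom becomes a limit), and each term vanishes by induction, so $\Hom(M_\beta,C)=0$. For $i=1$ I would argue that since all transition maps $M_\gamma \hookrightarrow M_{\gamma'}$ are monomorphisms and each quotient $M_{\gamma+1}/M_\gamma \in \hhh$ is right orthogonal to $C$, a Mittag-Leffler / $\varprojlim^1$-type argument — or equivalently the observation that $\Ext^1(\colim M_\gamma, C)$ injects into a suitable inverse limit of $\Ext^1(M_\gamma,C)=0$ when the relevant $\varprojlim^1$ term vanishes — forces $\Ext^1(M_\beta, C) = 0$; the $\varprojlim^1$ obstruction vanishes because the surjections $\Hom(M_{\gamma'},C) \to \Hom(M_\gamma,C)$ coming from the vanishing of $\Hom$ and $\Ext^1$ on the quotients make the inverse system of Hom's into one with surjective transition maps (in fact all zero), which is Mittag-Leffler.

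**The main obstacle I anticipate is the $\Ext^1$-part of the limit step**, i.e. justifying that $\Ext^1_{\ccc}(-,C)$ takes the transfinite union $M_{\beta} = \bigcup_{\gamma<\beta} M_{\gamma}$ to something that still vanishes. One clean way to finesse this: rather than a general colimit argument, I would reduce to the successor case by a different route — observe that $M_\beta$ itself is $\hhh$-filtered by the restricted filtration $(M_\gamma)_{\gamma\leq\beta}$, so actually the cleanest formulation is a single transfinite induction proving simultaneously $\Ext^0(M_\beta,C)=0$ and $\Ext^1(M_\beta,C)=0$, handling the limit ordinal via the six-term exact sequence relating $\Ext^*(\colim,-)$, $\varprojlim$ and $\varprojlim^1$ of $\Ext^*(M_\gamma,-)$, and using that the Hom inverse system is Mittag-Leffler (all maps zero past the first nonzero stage, here all zero) to kill $\varprojlim^1$. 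Grothendieck's axiom AB5 (exact filtered colimits), which holds in $\ccc$, is what guarantees this spectral-sequence/six-term-sequence machinery is available. Once the induction is set up correctly, each individual step is a routine diagram chase, so I would present the limit step carefully and treat the successor step briefly.
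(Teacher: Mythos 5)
Your proposal is correct in substance but follows a genuinely different route from the paper. The paper never unwinds the semilocalizing hull: it uses the preceding proposition that a left orthogonal ${}^{\perp}\hhh'$ is always semilocalizing, applied to $\hhh' = \hhh^{\perp}$. Since $\hhh \subseteq {}^{\perp}(\hhh^{\perp})$ and ${}^{\perp}(\hhh^{\perp})$ is semilocalizing, minimality of the hull gives $\langle \hhh \rangle_{\mathrm{sloc}} \subseteq {}^{\perp}(\hhh^{\perp})$, and applying $(-)^{\perp}$ yields $\hhh^{\perp} \subseteq (\langle \hhh \rangle_{\mathrm{sloc}})^{\perp}$; the reverse inclusion is trivial, as you say. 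You instead invoke the explicit description of the hull as the $\hhh$-filtered objects (Proposition \ref{propfilt}) and run a transfinite induction, in effect reproving an Eklof-type lemma. Both routes work; the paper's is a two-line Galois-connection argument needing neither Proposition \ref{propfilt} nor any transfinite induction at this point, while yours makes the closure property concrete at the cost of redoing the ordinal bookkeeping.

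One caveat about your limit step: the six-term (Milnor) sequence relating $\Ext^1$ of a union to $\varprojlim \Ext^1$ and $\varprojlim^1\Hom$, and the Mittag-Leffler criterion for killing $\varprojlim^1$, are standard only for chains of countable cofinality; for a general limit ordinal $\beta$ higher derived limits and convergence questions intervene, so the machinery you cite does not apply as stated. But you do not need it, precisely because the Hom groups vanish: given an extension $0 \lra C \lra X \lra M_{\beta} \lra 0$, its restriction to each $M_{\gamma}$ ($\gamma < \beta$) splits since $\Ext^1(M_{\gamma},C)=0$, and the splitting is \emph{unique} since $\Hom(M_{\gamma},C)=0$; uniqueness forces the splittings to be compatible, and since $M_{\beta} = \colim_{\gamma < \beta} M_{\gamma}$ by AB5 they glue to a splitting over $M_{\beta}$, giving $\Ext^1(M_{\beta},C)=0$, while $\Hom(M_{\beta},C)= \lim_{\gamma<\beta}\Hom(M_{\gamma},C)=0$ as you note. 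With that replacement (which is the standard proof of the Eklof lemma in the presence of Hom-vanishing) your induction is watertight.
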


\begin{proof}
Obviously $\hhh \subseteq \langle \hhh \rangle_{\mathrm{sloc}}$ implies $(\langle \hhh \rangle_{\mathrm{sloc}})^{\perp} \subseteq \hhh^{\perp}$. Since $\hhh \subseteq ^{\perp}(\hhh^{\perp})$ and $^{\perp}(\hhh^{\perp})$ is semilocalizing, we further have $\langle \hhh \rangle_{\mathrm{sloc}} \subseteq ^{\perp}(\hhh^{\perp})$ and hence $\hhh^{\perp} = (^{\perp}(\hhh^{\perp}))^{\perp} \subseteq (\langle \hhh \rangle_{\mathrm{sloc}})^{\perp}$. 
\end{proof}

\begin{corollary}\label{corperp}
For localizing subcategories $(\www_i)_i$, we have $(\sup_i \www_i)^{\perp} = \cap_i \www_i^{\perp}$. 
\end{corollary}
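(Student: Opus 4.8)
The plan is to derive this formally from Corollary \ref{corWsup} together with the immediately preceding proposition, the only genuine input being the elementary fact that the right orthogonal complement operation converts unions of families into intersections.

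First I would set $\hhh = \cup_i \www_i$ and invoke Corollary \ref{corWsup}, which identifies $\sup_i \www_i$ in the lattice $W$ with the semilocalizing hull $\langle \cup_i \www_i \rangle_{\mathrm{sloc}}$. Then I would apply the preceding proposition, which states $(\langle \hhh \rangle_{\mathrm{sloc}})^{\perp} = \hhh^{\perp}$ for any full subcategory $\hhh \subseteq \ccc$, to conclude that
$$
(\sup_i \www_i)^{\perp} = (\langle \cup_i \www_i \rangle_{\mathrm{sloc}})^{\perp} = (\cup_i \www_i)^{\perp}.
$$

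It then remains to unwind the definition of the right orthogonal complement: an object $C$ lies in $(\cup_i \www_i)^{\perp}$ exactly when $H \perp C$ for every $H \in \cup_i \www_i$, i.e. when for every index $i$ and every $H \in \www_i$ one has $H \perp C$; this is precisely the condition that $C \in \www_i^{\perp}$ for all $i$, so $(\cup_i \www_i)^{\perp} = \cap_i \www_i^{\perp}$. Combining the two displayed equalities gives $(\sup_i \www_i)^{\perp} = \cap_i \www_i^{\perp}$, as claimed.

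There is no real obstacle here: once Corollary \ref{corWsup} and the previous proposition are in place, the statement is a routine formal consequence, with the final step being a direct manipulation of quantifiers in the definition of $\perp$. The only point worth a moment's care is making sure the previous proposition is applied to the (possibly large) full subcategory $\cup_i \www_i$ and not to some smaller generating set, but since that proposition is stated for arbitrary full subcategories $\hhh \subseteq \ccc$, no extra hypothesis is needed.
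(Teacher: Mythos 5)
Your argument is correct and is exactly the derivation the paper intends (the corollary is stated without proof precisely because it follows from Corollary \ref{corWsup}, the identity $(\langle \hhh \rangle_{\mathrm{sloc}})^{\perp} = \hhh^{\perp}$, and the elementary fact that $(\cup_i \www_i)^{\perp} = \cap_i \www_i^{\perp}$). Nothing to add.
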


\subsection{Equivalent approaches to localization}\label{parthree}
Let $\AAA$ be a linear category and let $\ccc$ be a Grothendieck category. Recall that a strict localization $\LLL \subseteq \ccc$ is a full subcategory which is closed under adding isomorphic objects, for which the inclusion functor $i: \LLL \lra \ccc$ has an exact left adjoint $a: \ccc \lra \LLL$. Consider the following posets, ordered by inclusion:
\begin{enumerate}
\item The poset $T$ of linear topologies on $\AAA$;
\item The poset $W$ of localizing Serre subcategories of $\ccc$;
\item The poset $L$ of strict localizations of $\ccc$.
\end{enumerate}
It is well known that the data in (2) and (3) are equivalent, and for $\ccc = \Mod(\AAA)$ all three types of data are equivalent. Let us briefly recall the isomorphisms involved. We have an order isomorphism between $T$ and $W$, and dualities between $T$ and $L$ and between $W$ and $L$ respectively (the duality between $W$ and $L$ holds for arbitrary $\ccc$). We use the following notations. For $\ttt \in T$, $\www_{\ttt}$ and $\LLL_{\ttt}$ are the associated localizing subcategory and the associated localization. For $\www \in W$, $\ttt_{\www}$ and $\LLL_{\ttt}$ are the associated topology and the associated localization. For $\LLL \in L$, $\ttt_{\LLL}$ and $\www_{\LLL}$ are the associated topology and localizing subcategory.

We describe the involved constructions.
Consider $\ttt \in T$. A module $F \in \Mod(\AAA)$ is called a \emph{sheaf} on $\AAA$ provided that $F(A) \cong \ccc(\AAA(-,A), F) \lra \ccc(R,F)$ is an isomorphism for all $R \in \ttt(A)$. We thus obtain the full subcategory $\Sh(\AAA, \ttt)$ of sheaves on $\AAA$ and we have $\LLL_{\ttt} = \Sh(\AAA, \ttt)$. A module $F \in \Mod(\AAA)$ is called a \emph{null presheaf} if for all $x \in F(A)$ there exists $R \in \ttt(A)$ such that for all $f: A' \lra A$ in $R(A)$ we have $F(f)(x) = 0$. Then $\www_{\ttt}$ is the full subcategory of null presheaves.

Consider $\www \in W$. A subobject $R \subseteq \AAA(-,A)$ is in $\ttt_{\www}(A)$ if and only if we have $\AAA(-,A)/R \in \www$. We have $\LLL_{\www} = \www^{\perp}$.

Consider $\LLL \in L$ and let $a: \ccc \lra \LLL$ be an exact left adjoint of the inclusion $\LLL \subseteq \ccc$. We have $\www = \Kern(a) = \{ F \in \Mod(\AAA) \,\, |\,\, a(F) = 0 \}$ and a sieve $r: R \lra \AAA(-,A)$ is in $\ttt_{\LLL}$ if and only if $a(r)$ is an isomorphism.

Let us first consider the duality between $W$ and $L$ in an arbitrary Grothendieck category $\ccc$. We obtain:
\begin{proposition}\label{propinfloc}
For a collection of strict localizations $(\LLL_i)_i$ in $L$, we have $\inf_i \LLL_i = \cap_i \LLL_i$.
\end{proposition}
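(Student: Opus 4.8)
The plan is to derive the statement formally from Corollary~\ref{corperp} together with the duality between $W$ and $L$, which is available for an arbitrary Grothendieck category $\ccc$. Recall that this duality sends a strict localization $\LLL \in L$ to the localizing subcategory $\www_{\LLL} = \Kern(a)$, where $a \colon \ccc \lra \LLL$ is the exact left adjoint of the inclusion, and sends a localizing subcategory $\www \in W$ to the strict localization $\LLL_{\www} = \www^{\perp}$, these assignments being mutually inverse and order-reversing. In particular $W$ and $L$ are both complete lattices, and the order-reversing bijection carries the suprema in $W$ (computed by Corollary~\ref{corWsup} as semilocalizing, equivalently localizing, hulls) to infima in $L$. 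So the natural candidate for $\inf_i \LLL_i$ is $\LLL_{\sup_i \www_{\LLL_i}}$.

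First I would observe, using Corollary~\ref{corperp}, that
\[
\bigcap_i \LLL_i = \bigcap_i \www_{\LLL_i}^{\perp} = \Bigl( \sup_i \www_{\LLL_i} \Bigr)^{\perp} = \LLL_{\sup_i \www_{\LLL_i}},
\]
so that $\bigcap_i \LLL_i$ is indeed a strict localization, i.e.\ an object of $L$. It then remains to check the universal property in $L$. For each index $j$ we have $\www_{\LLL_j} \subseteq \sup_i \www_{\LLL_i}$, and applying the order-reversing correspondence gives $\bigcap_i \LLL_i = \LLL_{\sup_i \www_{\LLL_i}} \subseteq \LLL_{\www_{\LLL_j}} = \LLL_j$; hence $\bigcap_i \LLL_i$ is a lower bound for $(\LLL_i)_i$ in $L$. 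Conversely, if $\LLL \in L$ satisfies $\LLL \subseteq \LLL_j$ for all $j$, then $\www_{\LLL} \supseteq \www_{\LLL_j}$ for all $j$, whence $\www_{\LLL} \supseteq \sup_i \www_{\LLL_i}$, and applying the correspondence once more yields $\LLL = \LLL_{\www_{\LLL}} \subseteq \LLL_{\sup_i \www_{\LLL_i}} = \bigcap_i \LLL_i$. Thus $\bigcap_i \LLL_i = \inf_i \LLL_i$.

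The only substantive input beyond this bookkeeping is the fact that the $W$--$L$ correspondence is a genuine anti-isomorphism of posets for arbitrary $\ccc$ — this is where one uses that $\www \mapsto \www^{\perp}$ lands in strict localizations and recovers $\www$ as the kernel of the reflector — together with Corollary~\ref{corperp}, whose proof rests on Proposition~\ref{propfilt}. Granting these, the argument requires no further manipulation of filtrations or adjoints, so I anticipate no real obstacle; the one point to handle with a little care is simply to phrase everything through the order-reversing bijection, so that the suprema in $W$ translate correctly into infima in $L$.
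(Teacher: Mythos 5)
Your proof is correct and is essentially the paper's argument: both write $\LLL_i = \www_{\LLL_i}^{\perp}$ and invoke Corollary~\ref{corperp} to conclude that $\cap_i \LLL_i = (\sup_i \www_{\LLL_i})^{\perp}$ is itself a strict localization, which is the only substantive point. The explicit verification of the universal property via the $W$--$L$ duality is harmless but superfluous, since in the inclusion-ordered poset $L$ an intersection that lies in $L$ is automatically the infimum.
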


\begin{proof}
It suffices that $\cap_i \LLL_i$ is a strict localization, which follows from Corollary \ref{corperp} after writing $\LLL_i = \www_i^{\perp}$ for the corresponding localizing subcategories $\www_i$.
\end{proof}

Next consider the order isomorphism between $T$ and $W$ for $\ccc = \Mod(\AAA)$. Since it respects suprema, we have:

\begin{proposition}
$\www_{\sup_i \ttt_i} = \sup_i \www_{\ttt_i}$ and $\ttt_{\sup_i \www_i} = \sup_i \ttt_{\www_i}$.
\end{proposition}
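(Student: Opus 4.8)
The plan is to deduce both equalities from the single formal fact that an order isomorphism between complete lattices preserves arbitrary suprema (and infima), applied to the order isomorphism $T \cong W$ recalled at the beginning of this subsection. No localization theory beyond what is already in place is needed; the statement is essentially bookkeeping.

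First I would record the elementary lattice-theoretic observation: if $\phi\colon P \to Q$ is an order isomorphism of posets and $(p_i)_i$ is a family in $P$ that admits a supremum, then $\phi(\sup_i p_i) = \sup_i \phi(p_i)$. Indeed, monotonicity of $\phi$ makes $\phi(\sup_i p_i)$ an upper bound of $(\phi(p_i))_i$; and if $q$ is any upper bound of $(\phi(p_i))_i$, then monotonicity of $\phi^{-1}$ makes $\phi^{-1}(q)$ an upper bound of $(p_i)_i$, whence $\phi^{-1}(q) \geq \sup_i p_i$ and therefore $q \geq \phi(\sup_i p_i)$. So $\phi(\sup_i p_i)$ is the least upper bound of $(\phi(p_i))_i$. (The dual statement for infima is of course also true, and would give the analogous statement for $\inf$.)

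Next I would invoke the structure already established: $T$ is a complete lattice by \S\ref{parlin}, $W$ is a complete lattice by \S\ref{parpreloc} (with $\sup_i \www_i = \langle \cup_i \www_i \rangle_{\mathrm{sloc}}$ by Corollary \ref{corWsup}), and the assignments $\ttt \mapsto \www_{\ttt}$ (the subcategory of null presheaves) and $\www \mapsto \ttt_{\www}$ (the sieves $R \subseteq \AAA(-,A)$ with $\AAA(-,A)/R \in \www$) are mutually inverse, order-preserving bijections between $T$ and $W$, i.e.\ constitute an order isomorphism --- this is the well-known equivalence of the data (1) and (2) for $\ccc = \Mod(\AAA)$ recalled above. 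Applying the observation to $\phi\colon T \to W$, $\ttt \mapsto \www_{\ttt}$, and to the family $(\ttt_i)_i$ gives $\www_{\sup_i \ttt_i} = \sup_i \www_{\ttt_i}$; applying it to the inverse isomorphism $W \to T$, $\www \mapsto \ttt_{\www}$, and to the family $(\www_i)_i$ gives $\ttt_{\sup_i \www_i} = \sup_i \ttt_{\www_i}$. (Either equality can also be read off from the other by applying the appropriate inverse map and using that the two assignments are mutually inverse.)

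I do not expect a genuine obstacle here: the proposition is a formal consequence of "$T \cong W$ as posets" together with completeness of these lattices. The only point deserving care is the assertion that $\ttt \leftrightarrow \www$ is truly an order isomorphism and not merely a bijection. If one preferred to be self-contained rather than citing the well-known correspondence, one would check monotonicity in both directions directly from the descriptions above --- a larger topology has more covering sieves, hence kills more quotients $\AAA(-,A)/R$, hence produces a larger $\www_{\ttt}$, and symmetrically in the other direction --- and that the two assignments are mutually inverse, which is part of the localization theory of $\Mod(\AAA)$ from \cite{lowenlin}.
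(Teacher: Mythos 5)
Your proposal is correct and follows essentially the same route as the paper: the paper offers no separate argument beyond the remark that the order isomorphism between $T$ and $W$ respects suprema, which is precisely the lattice-theoretic fact you spell out and apply. Your additional verification that order isomorphisms of complete lattices preserve arbitrary suprema is exactly the implicit content of that remark.
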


\subsection{The tensor product topology}\label{partensortop}

Consider linear sites $(\AAA, \ttt_{\AAA})$ and $(\BBB, \ttt_{\BBB})$. In this section we define a topology $\ttt = \ttt_{\AAA} \boxtimes \ttt_{\BBB}$ on $\CCC = \AAA \otimes \BBB$, called the tensor product topology. For $M \in \Mod(\AAA)$ and $N \in \Mod(\BBB)$, we obtain $M \otimes N \in \Mod(\AAA \otimes \BBB)$ with $(M \otimes N)(A,B) = M(A) \otimes N(B)$.
Consider objects $A \in \AAA$, $B \in \BBB$ and covering sieves $R \in \ttt_{\AAA}(A)$ and $S \in \ttt_{\BBB}(B)$. We have $\AAA(-,A) \otimes \BBB(-,B) = \CCC(-, (A,B))$
and we  thus obtain a canonical morphism
$$\phi_{R,S}: R \otimes S \lra  \CCC(-, (A,B)).$$
We define the \emph{tensor product sieve} of $R$ and $S$ to be $$R \boxtimes S = \Beeld(\phi_{R, S}).$$
Concretely, any element in $(R \boxtimes S)(A', B')$ can be written as $\sum_{i = 1}^n \alpha_i \otimes \beta_i$ with $\alpha_i \in R(A')$ and $\beta_i \in S(B')$.
Consider the following cover systems on $\AAA \otimes \BBB$:
\begin{itemize}
\item $\rrr_{\AAA} = \{ R \boxtimes \BBB(-,B) \,\, | \,\, R \in \ttt_{\AAA}, B \in \BBB \}$;
\item $\rrr_{\BBB} = \{ \AAA(-,A) \boxtimes S \,\, |\,\, S \in \ttt_{\BBB}, A \in \AAA \}$;
\item $\rrr = \{ R \boxtimes S \,\, |\,\, R \in \ttt_{\AAA}, S \in \ttt_{\BBB} \}$.
\end{itemize}

\begin{lemma}\label{lem1}
Consider objects $A, A' \in \AAA$ and $B, B' \in \BBB$, covering sieves $R \in \ttt_{\AAA}(A)$ and $S \in \ttt_{\BBB}(B)$, and a morphism $h = \sum_{i = 1}^n f_i \otimes g_i \in \AAA(A',A) \otimes \BBB(B', B) = \CCC((A',B'), (A,B))$. We have
$$(\cap_{i = 1}^n f_i^{-1} R) \boxtimes (\cap_{i = 1}^n g_i^{-1} S) \subseteq h^{-1} (R \boxtimes S).$$
\end{lemma}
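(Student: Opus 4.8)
The statement is purely about sieves in $\CCC = \AAA \otimes \BBB$ and the operation $h^{-1}(-)$ of pulling back along a fixed morphism $h$, so I expect the proof to be a direct element-level verification. First I would unravel what both sides mean on an arbitrary object $(A'', B'') \in \CCC$. An element of $(R \boxtimes S)(A', B')$ is, by the concrete description given just before the lemma, a finite sum $\sum_j \alpha_j \otimes \beta_j$ with $\alpha_j \in R(A')$, $\beta_j \in S(B')$; and a morphism $(A'',B'') \to (A',B')$ in $\CCC$ is a finite sum $\sum_k p_k \otimes q_k$ with $p_k \in \AAA(A'',A')$, $q_k \in \BBB(B'',B')$. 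So $h^{-1}(R\boxtimes S)(A'',B'')$ consists of all $u \in \CCC((A'',B''),(A,B))$ such that $h \circ u \in (R \boxtimes S)(A'',B'')$, where composition is bilinear in the obvious way: $(\sum_i f_i\otimes g_i)\circ(\sum_k p_k \otimes q_k) = \sum_{i,k} (f_i p_k)\otimes(g_i q_k)$.

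**Main step.** Take a typical generator of the left-hand side at $(A'',B'')$, namely $\alpha \otimes \beta$ with $\alpha \in (\cap_i f_i^{-1}R)(A'')$ and $\beta \in (\cap_i g_i^{-1}S)(A'')$; since $\Beeld(\phi_{-,-})$ of a pair of sieves is additive in each argument, it suffices to show each such pure tensor lands in $h^{-1}(R \boxtimes S)$, and then extend by linearity. By definition $f_i \alpha \in R(A)$ for every $i$ and $g_i \beta \in S(B)$ for every $i$. Now compute
$$
h \circ (\alpha \otimes \beta) \;=\; \Bigl(\sum_{i=1}^n f_i \otimes g_i\Bigr)\circ(\alpha \otimes \beta) \;=\; \sum_{i=1}^n (f_i \alpha)\otimes(g_i \beta),
$$
and each summand $(f_i\alpha)\otimes(g_i\beta)$ has first factor in $R(A)$ and second factor in $S(B)$, hence lies in $(R \boxtimes S)(A'',B'')$; since $R \boxtimes S$ is a submodule, the sum does too. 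Therefore $\alpha \otimes \beta \in h^{-1}(R \boxtimes S)(A'',B'')$, which is what we wanted. Running $(A'',B'')$ over all objects gives the claimed inclusion of sieves.

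**Where the care is needed.** There is no real obstacle here — the only thing to be vigilant about is that the two operations involved ($\boxtimes$ applied to sieves, and $h^{-1}$) both commute appropriately with finite sums, so that it genuinely suffices to check the inclusion on pure tensors $\alpha \otimes \beta$ with $\alpha$, $\beta$ in the relevant intersected pullback sieves; this is immediate since $R \boxtimes S = \Beeld(\phi_{R,S})$ is the image submodule and $h^{-1}$ of a submodule is a submodule. One should also note that $\cap_i f_i^{-1}R$ is again a sieve on $A'$ (a finite intersection of submodules of $\AAA(-,A')$), and likewise for $\cap_i g_i^{-1}S$, so that the expression $(\cap_i f_i^{-1}R)\boxtimes(\cap_i g_i^{-1}S)$ is well defined; this uses only that the $f_i$ are morphisms into $A$, not any covering hypothesis, so in fact the lemma holds for arbitrary sieves $R$, $S$ and the covering hypothesis is only recorded for later use. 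That completes the argument.
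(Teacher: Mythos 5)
Your argument is correct and is exactly the routine element-level verification the paper intends: Lemma \ref{lem1} is stated there without proof, and your computation $h\circ(\alpha\otimes\beta)=\sum_{i=1}^{n}(f_i\alpha)\otimes(g_i\beta)$, combined with the observation that $h^{-1}(R\boxtimes S)$ is a submodule so that it suffices to check pure tensors, is all that is needed; your remark that no covering hypothesis on $R$, $S$ is used is also accurate. The only slip is typographical: the second factor of your generator should satisfy $\beta\in(\cap_{i=1}^{n}g_i^{-1}S)(B'')$ rather than being evaluated at $A''$.
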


\begin{lemma}\label{lem2}
Consider objects $A \in \AAA$ and $B \in \BBB$, a covering sieve $R \in \ttt_{\AAA}(A)$, and for every morphism $h = \sum_{i = 1}^n f_i \otimes g_i \in (R \boxtimes \BBB(-,B))(A', B')$ with $A' \in \AAA$ and $B' \in \BBB$  a covering sieve $R_h \in \ttt_{\AAA}(A')$. We have
$$(R \circ (R_{f \otimes 1})_f) \boxtimes \BBB(-,B) \subseteq (R \boxtimes \BBB(-,B))\circ (R_h \boxtimes \BBB(-,B'))_h.$$
\end{lemma}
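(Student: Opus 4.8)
The plan is to unwind both sides of the claimed inclusion as explicitly generated submodules of $\CCC(-,(A,B))$, where $\CCC = \AAA \otimes \BBB$, reduce the left-hand side to its generators, and then realise each such generator as one of the generators of the right-hand composite by means of a single factorisation in $\CCC$.

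First I would recall the relevant explicit descriptions. The composite sieve $R \circ (R_{f \otimes 1})_f$ on $A$ is the submodule of $\AAA(-,A)$ generated by the morphisms $f \circ e$ with $f \in R(A'')$ and $e \in R_{f \otimes 1}(A''')$; tensoring with the maximal sieve $\BBB(-,B)$, an arbitrary element of $(R \circ (R_{f \otimes 1})_f) \boxtimes \BBB(-,B)$ is a finite sum $\sum_j \alpha_j \otimes \beta_j$ with $\alpha_j$ in $(R \circ (R_{f \otimes 1})_f)(P_j)$ and $\beta_j \colon Q_j \to B$. Since the right-hand side is a submodule, it suffices to treat a single summand $\alpha \otimes \beta$; and since $R \circ (R_{f \otimes 1})_f$ is generated by the $f \circ e$ and each $R_{f \otimes 1}$ is a sieve, hence closed under precomposition, I may assume $\alpha = f \circ e$ with $f \in R(A'')$ and $e \in R_{f \otimes 1}(A''')$, with $\beta \colon Q \to B$ arbitrary.

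The key step is then the identity $(f \circ e) \otimes \beta = (f \otimes 1_B) \circ (e \otimes \beta)$ in $\CCC((A''',Q),(A,B))$. Here $f \otimes 1_B \in (R \boxtimes \BBB(-,B))(A'',B)$, so it occurs as one of the morphisms $h$ indexing the right-hand composite, and for this $h$ one has $R_h = R_{f \otimes 1}$ and source second component $B' = B$; moreover $e \otimes \beta$ is a generator of $R_{f \otimes 1} \boxtimes \BBB(-,B) = R_h \boxtimes \BBB(-,B')$. Hence $(f \circ e) \otimes \beta = h \circ (e \otimes \beta)$ is, by definition, a generator of $(R \boxtimes \BBB(-,B)) \circ (R_h \boxtimes \BBB(-,B'))_h$, which yields the inclusion.

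The argument is essentially mechanical, and I expect the only delicate point to be the bookkeeping: one must match the generic indexing morphism $h$ of the right-hand composite with the specific element $f \otimes 1_B$ so that the prescribed sieve $R_h$ becomes literally $R_{f \otimes 1}$, and one must carry out the reduction to generators of the left-hand side carefully, invoking the explicit description of the tensor sieve recalled above together with the submodule and sieve closure properties. No genuine obstacle beyond this is anticipated.
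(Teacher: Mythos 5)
Your argument is correct: reducing to generators $(f\circ e)\otimes\beta$ of the left-hand sieve and rewriting each as $(f\otimes 1_B)\circ(e\otimes\beta)$, with $f\otimes 1_B$ an admissible indexing morphism $h$ and $e\otimes\beta\in R_{f\otimes 1}\boxtimes\BBB(-,B')$, is exactly the intended verification. The paper states Lemma \ref{lem2} without proof precisely because it is this mechanical computation, so your proposal matches the (implicit) argument and has no gaps.
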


\begin{definition}\label{deftensortop}
The \emph{tensor product topology} $\ttt = \ttt_{\AAA} \boxtimes \ttt_{\BBB}$ on $\AAA \otimes \BBB$ is the smallest topology containing $\rrr_{\AAA}$ and $\rrr_{\BBB}$, that is
$$\ttt_{\AAA} \boxtimes \ttt_{\BBB} = \langle \rrr_{\AAA} \cup \rrr_{\BBB}\rangle_{\mathrm{top}}.$$
The \emph{tensor product site} of $(\AAA, \ttt_{\AAA})$ and $(\BBB, \ttt_{\BBB})$ is 
$$(\AAA, \ttt_{\AAA}) \boxtimes (\BBB, \ttt_{\BBB}) = (\AAA \otimes \BBB, \ttt_{\AAA} \boxtimes \ttt_{\BBB}) = (\CCC, \ttt).$$
\end{definition}

\begin{proposition}
\begin{enumerate}
\item[]
\item The cover systems $\ttt_1 = \rrr_{\AAA}^{\mathrm{up}}$ and  $\ttt_2 = \rrr_{\BBB}^{\mathrm{up}}$ are topologies.
\item The cover systems $\rrr^{\mathrm{up}}$ and $\ttt_1 \cup \ttt_2$ are upclosed and localizing.
\item The topoplogy $\ttt$ is the smallest topology containing $\rrr$.
\item We have $\ttt = \rrr^{\mathrm{upglue}}$ and $\ttt = (\rrr_{\AAA} \cup \rrr_{\BBB})^{\mathrm{upglue}}$.
\end{enumerate}
\end{proposition}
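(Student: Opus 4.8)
The plan is to prove the four assertions in order, each feeding the next, with Lemmas~\ref{lem1} and~\ref{lem2} serving as the main engine. Throughout I will use two standard facts about linear topologies (\cite[\S 2.2]{lowenlin}): a topology is closed under passing to supersieves, and a finite intersection of covering sieves of a topology is again covering. Recall also that a localizing system is, by definition, a cover system satisfying the identity and pullback axioms, and that $\ttt = \langle \rrr_{\AAA} \cup \rrr_{\BBB} \rangle_{\mathrm{top}}$ by Definition~\ref{deftensortop}.

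For (1) I would verify the identity, pullback and glueing axioms for $\ttt_1 = \rrr_{\AAA}^{\mathrm{up}}$, the case of $\ttt_2$ being symmetric. The identity axiom is immediate, since $\CCC(-,(A,B)) = \AAA(-,A) \boxtimes \BBB(-,B) \in \rrr_{\AAA}$ by the identity axiom for $\ttt_{\AAA}$. For the pullback axiom, if $Z \in \ttt_1(A,B)$ with $R \boxtimes \BBB(-,B) \subseteq Z$ for some $R \in \ttt_{\AAA}(A)$ and $h = \sum_i f_i \otimes g_i \colon (A',B') \to (A,B)$, then Lemma~\ref{lem1} gives $(\cap_i f_i^{-1} R) \boxtimes \BBB(-,B') \subseteq h^{-1}(R \boxtimes \BBB(-,B)) \subseteq h^{-1} Z$, and $\cap_i f_i^{-1} R \in \ttt_{\AAA}(A')$, so $h^{-1} Z \in \ttt_1$. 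The glueing axiom is the crux: given $Z \in \ttt_1(A,B)$ with $R \boxtimes \BBB(-,B) \subseteq Z$ and covering sieves $Z_h \in \ttt_1$ for $h \in Z$, I choose for each $h \in (R \boxtimes \BBB(-,B))(A',B')$ a sieve $R_h \in \ttt_{\AAA}(A')$ with $R_h \boxtimes \BBB(-,B') \subseteq Z_h$; the glueing axiom for $\ttt_{\AAA}$ gives $R \circ (R_{f \otimes 1})_f \in \ttt_{\AAA}(A)$, and Lemma~\ref{lem2} together with monotonicity of the glueing composite then yields $(R \circ (R_{f \otimes 1})_f) \boxtimes \BBB(-,B) \subseteq Z \circ (Z_h)_h$, so $Z \circ (Z_h)_h$ contains a member of $\rrr_{\AAA}$ and hence lies in $\ttt_1$.

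Part (2) is then mostly bookkeeping: $\rrr_{\AAA}^{\mathrm{up}}$, $\rrr_{\BBB}^{\mathrm{up}}$ and $\rrr^{\mathrm{up}}$ are upclosed by construction, and a union of upclosed cover systems is upclosed, so $\ttt_1 \cup \ttt_2$ is upclosed; its identity axiom follows from (1) and its pullback axiom from the fact that $\ttt_1, \ttt_2$ are topologies, while for $\rrr^{\mathrm{up}}$ both axioms go as in (1), the pullback axiom now also using $\cap_i g_i^{-1} S \in \ttt_{\BBB}$. For (3), one inclusion is formal: since $\AAA(-,A) \in \ttt_{\AAA}$ and $\BBB(-,B) \in \ttt_{\BBB}$ we have $\rrr_{\AAA} \cup \rrr_{\BBB} \subseteq \rrr$, hence every topology containing $\rrr$ contains $\ttt$. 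For the reverse inclusion $\rrr \subseteq \ttt$ I must show $R \boxtimes S \in \ttt$ for $R \in \ttt_{\AAA}(A)$, $S \in \ttt_{\BBB}(B)$; this is where the genuine work lies, as it cannot be reduced to $R \boxtimes S = (R \boxtimes \BBB(-,B)) \cap (\AAA(-,A) \boxtimes S)$, which fails without flatness hypotheses on $k$. I would instead apply the glueing axiom of $\ttt$ to the covering sieve $R \boxtimes \BBB(-,B) \in \rrr_{\AAA} \subseteq \ttt$: for $h = \sum_i f_i \otimes g_i \in (R \boxtimes \BBB(-,B))(A',B')$ with $f_i \in R(A')$, take the auxiliary cover $R_h = \AAA(-,A') \boxtimes (\cap_i g_i^{-1} S) \in \rrr_{\BBB} \subseteq \ttt$; every generator $h \circ g$ of the glueing composite, with $g = \sum_j k_j \otimes l_j \in R_h$, equals $\sum_{i,j} (f_i k_j) \otimes (g_i l_j)$ with $f_i k_j \in R$ and $g_i l_j \in S$, so the composite lies inside $R \boxtimes S$; since it is in $\ttt$ and $\ttt$ is closed under supersieves, $R \boxtimes S \in \ttt$.

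Finally, (4) is a formal consequence of (2), (3) and the explicit description of the topology generated by a localizing cover system (\cite[\S 2.2]{lowenlin}), which for the systems $\rrr^{\mathrm{up}}$ and $(\rrr_{\AAA} \cup \rrr_{\BBB})^{\mathrm{up}}$ occurring here produces $\rrr^{\mathrm{upglue}}$ and $(\rrr_{\AAA} \cup \rrr_{\BBB})^{\mathrm{upglue}}$ respectively. As adjoining supersieves to a cover system does not change the topology it generates, $\ttt = \langle \rrr \rangle_{\mathrm{top}} = \langle \rrr^{\mathrm{up}} \rangle_{\mathrm{top}} = \rrr^{\mathrm{upglue}}$ by (3), using that $\rrr^{\mathrm{up}}$ is localizing by (2); similarly $\ttt = \langle \rrr_{\AAA} \cup \rrr_{\BBB} \rangle_{\mathrm{top}} = \langle \ttt_1 \cup \ttt_2 \rangle_{\mathrm{top}} = (\rrr_{\AAA} \cup \rrr_{\BBB})^{\mathrm{upglue}}$, since $\ttt_1 \cup \ttt_2 = (\rrr_{\AAA} \cup \rrr_{\BBB})^{\mathrm{up}}$ is localizing by (2). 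I expect the two glueing arguments --- the verification in (1), where Lemma~\ref{lem2} is indispensable, and in particular the tailored choice of the covers $R_h$ in (3) that are simultaneously small enough for composition to land inside $R \boxtimes S$ and still covering --- to be the only non-routine points.
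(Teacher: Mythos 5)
Your proposal is correct and follows essentially the same route as the paper: Lemma \ref{lem1} gives the localizing properties in (1)--(2), Lemma \ref{lem2} upgrades $\ttt_1$, $\ttt_2$ to topologies, and (3) is proved by glueing along $R \boxtimes \BBB(-,B)$ using one-sided covers from $\rrr_{\BBB}$, with (4) following formally from the description of $\langle - \rangle_{\mathrm{top}}$ for localizing systems. Your explicit choice of the auxiliary covers $\AAA(-,A') \boxtimes (\cap_i g_i^{-1} S)$ in step (3) is in fact slightly more careful than the paper's phrasing (which writes $\AAA(-,A') \boxtimes S$ but implicitly means exactly this pullback), so no changes are needed.
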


\begin{proof}
The cover systems $\ttt_1$, $\ttt_2$, $\ttt_1 \cup \ttt_2$ and $\rrr^{\mathrm{up}}$ are localizing by Lemma \ref{lem1} and $\ttt_1$ and $\ttt_2$ are topologies by further invoking Lemma \ref{lem2}.  It remains to prove (3). Obviously $\rrr_{\AAA} \cup \rrr_{\BBB} \subseteq \rrr$ so it remains to show that $\rrr \subseteq \ttt$. For $R \boxtimes S \in \rrr(A,B)$, we consider $R \boxtimes \BBB(-,B) \in \rrr_{\AAA}(A,B)$. For every $h = \sum_{i = 1}^n f_i \otimes g_i \in (R \boxtimes \BBB(-,B))(A', B')$, we have $\AAA(-,A') \boxtimes S \subseteq h^{-1}(R \boxtimes S)$ so $R \boxtimes S \in \ttt(A,B)$ by the glueing property.
\end{proof}

In the lattice $T$ of topologies on $\AAA \otimes \BBB$, we have 
\begin{equation}\label{eqTsup}
\ttt_{\AAA} \boxtimes \ttt_{\BBB} = \ttt_1 \vee \ttt_2.
\end{equation}

\subsection{Tensor product of (semi)localizing subcategories} \label{partensorlocsub} 
Consider linear categories $\AAA$ and $\BBB$ with $\CCC = \AAA \otimes \BBB$ and full subcategories $\www_{\AAA} \subseteq\Mod(\AAA)$ and $\www_{\BBB} \subseteq \Mod(\BBB)$.

Consider the following full subcategories of $\Mod(\CCC)$:
\begin{itemize}
\item $\www_1 = \{ F \in \Mod(\CCC) \,\, |\,\, F(-,B) \in \www_{\AAA}\,\,\, \forall B \in \BBB \}$;
\item $\www_2 = \{ F \in \Mod(\CCC) \,\, |\,\, F(A,-) \in \www_{\BBB}\,\,\, \forall A \in \AAA \}$.
\end{itemize}

\begin{proposition}
If $\www_{\AAA}$ (resp. $\www_{\BBB}$) is closed under extensions, coproducts, subobjects or quotient objects, then so is $\www_1$ (resp. $\www_2$). 
\end{proposition}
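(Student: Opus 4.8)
The plan is to reduce all four assertions to a single elementary fact: the ``evaluation in the second variable'' functors are exact and cocontinuous, because limits and colimits in the $k$-linear functor categories $\Mod(-) = \Fun_k(-,\Mod(k))$ (on the opposite) are computed objectwise in $\Mod(k)$. Concretely, for each $B \in \BBB$ I would consider
$$e_B \colon \Mod(\CCC) \lra \Mod(\AAA), \qquad F \longmapsto F(-,B),$$
which is exactly restriction along the $k$-linear functor $\AAA \lra \CCC = \AAA \otimes \BBB$, $A \mapsto A \otimes B$; being such a restriction functor between functor categories in which (co)limits are objectwise, it is $k$-linear, exact, and preserves arbitrary coproducts. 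Symmetrically one has $e_A \colon \Mod(\CCC) \lra \Mod(\BBB)$, $F \mapsto F(A,-)$, for $A \in \AAA$. The definitions of $\www_1$ and $\www_2$ then read
$$\www_1 = \bigcap_{B \in \BBB} e_B^{-1}(\www_{\AAA}), \qquad \www_2 = \bigcap_{A \in \AAA} e_A^{-1}(\www_{\BBB}),$$
and since an intersection of full subcategories each closed under a fixed (co)limit-type operation is again closed under that operation, it suffices to show that $e_B^{-1}(\www_{\AAA})$ inherits each of the four closure properties from $\www_{\AAA}$; the argument for $e_A^{-1}(\www_{\BBB})$ is verbatim the same.

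The remaining step is a uniform one-line diagram chase in each of the four cases, using exactness and coproduct-preservation of $e_B$. For extensions: given an exact sequence $0 \to F' \to F \to F'' \to 0$ in $\Mod(\CCC)$ with $e_B(F'), e_B(F'') \in \www_{\AAA}$, apply $e_B$ to get an exact sequence in $\Mod(\AAA)$ with outer terms in $\www_{\AAA}$, so $e_B(F) \in \www_{\AAA}$ by closure under extensions. For coproducts: if $F = \bigoplus_i F_i$ with all $e_B(F_i) \in \www_{\AAA}$, then $e_B(F) = \bigoplus_i e_B(F_i) \in \www_{\AAA}$. For subobjects (resp. quotient objects): if $F' \hookrightarrow F$ is a monomorphism (resp. $F \twoheadrightarrow F''$ an epimorphism) with $e_B(F) \in \www_{\AAA}$, then $e_B$ (being exact) sends it to a monomorphism $e_B(F') \hookrightarrow e_B(F)$ (resp. an epimorphism $e_B(F) \twoheadrightarrow e_B(F'')$), whence $e_B(F') \in \www_{\AAA}$ (resp. $e_B(F'') \in \www_{\AAA}$).

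I do not expect a genuine obstacle here. The only point requiring care is the standard observation that kernels, cokernels, images and coproducts in $\Mod(\CCC)$ are formed objectwise, which is precisely what makes each $e_B$ exact and coproduct-preserving; granting that, all four claims are immediate. One could dispense with the functors $e_B$, $e_A$ and argue directly with objectwise (co)limits, but isolating these restriction functors keeps the bookkeeping transparent and is convenient for the subsequent passage from closure properties to the tensor products $\www_{\AAA} \boxtimes \www_{\BBB}$.
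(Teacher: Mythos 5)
Your proof is correct, and the argument is exactly the routine verification the paper has in mind: the proposition is stated there without proof, being immediate from the fact that (co)limits in $\Mod(\CCC)$ are computed objectwise, which is precisely what makes your restriction functors $e_B$, $e_A$ exact and coproduct-preserving. Packaging $\www_1$ as $\bigcap_{B}e_B^{-1}(\www_{\AAA})$ and pulling the four closure properties back along these functors is a clean way to record it; nothing is missing.
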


We define the \emph{tensor product} of semilocalizing subcategories $\www_{\AAA}$ and $\www_{\BBB}$ to be
$$\www_{\AAA} \boxtimes \www_{\BBB} = \langle \www_1 \cup \www_2 \rangle_{\mathrm{sloc}}.$$
The tensor product is a semilocalizing subcategory, which is localizing if $\www_{\AAA}$ and $\www_{\BBB}$ are localizing by Corollary \ref{corWsup}. More precisely, in the lattice $W$ of localizing subcategories in $\Mod(\CCC)$, we thus have 
\begin{equation}\label{eqWsup}
\www_{\AAA} \boxtimes \www_{\BBB} = \www_1 \vee \www_2.
\end{equation}

\subsection{Tensor product of strict localizations}\label{partensorstrict}
Let $\AAA$, $\BBB$ and $\CCC = \AAA \otimes \BBB$ be as before. Consider strict localizations $i_{\AAA}: \LLL_{\AAA} \lra \Mod(\AAA)$ and $i_{\BBB}: \LLL_{\BBB} \lra \Mod(\BBB)$ with exact left adjoints $a_{\AAA}: \Mod(\AAA) \lra \LLL_{\AAA}$ and $a_{\BBB}: \Mod(\BBB) \lra \LLL_{\BBB}$.
Consider the following full subcategories of $\Mod(\CCC)$:
\begin{itemize}
\item $\LLL_1 = \{ F \in \Mod(\CCC) \,\, |\,\, F(-,B) \in \LLL_{\AAA}\,\,\, \forall B \in \BBB \}$;
\item $\LLL_2 = \{ F \in \Mod(\CCC) \,\, |\,\, F(A,-) \in \LLL_{\BBB}\,\,\, \forall A \in \AAA \}$.
\end{itemize}
The natural functors
\begin{itemize}
\item $a_1: \Mod(\CCC) \lra \LLL_1: F \longmapsto (a_1(F): (A,B) \longmapsto (a_{\AAA}(F(-,B)))(A))$;
\item $a_2: \Mod(\CCC) \lra \LLL_2: F \longmapsto (a_2(F): (A,B) \longmapsto (a_{\BBB}(F(A,-)))(B))$
\end{itemize}
are readily seen to be exact left adjoints of the inclusions $i_1: \LLL_1 \lra \Mod(\CCC)$ and $i_2: \LLL_2 \lra \Mod(\CCC)$ respectively.

We define the \emph{tensor product localization} of $\LLL_{\AAA}$ and $\LLL_{\BBB}$ to be
$$\LLL_{\AAA} \boxtimes \LLL_{\BBB} = \LLL_1 \cap \LLL_2,$$
which is a strict localization by Proposition \ref{propinfloc}.
In the lattice $L$ of strict localizations of $\Mod(\CCC)$, we thus have
\begin{equation}\label{eqLinf}
\LLL_{\AAA} \boxtimes \LLL_{\BBB} = \LLL_1 \wedge \LLL_2.
\end{equation}

\subsection{Relation between the three tensor products}\label{partensorrel}
Let $\AAA$, $\BBB$ and $\CCC = \AAA \otimes \BBB$ be as before. Suppose $\AAA$ is endowed with a topology $\ttt_{\AAA}$, a localizing subcategory $\www_{\AAA}$ and a strict localization $\LLL_{\AAA}$ (with left adjoint $a_{\AAA}: \Mod(\AAA) \lra \LLL$) which correspond as in \S \ref{parthree}, an similarly $\BBB$ is endowed with a topology $\ttt_{\BBB}$, a localizing subcategory $\www_{\BBB}$ and a strict localization $\LLL_{\BBB}$ (with left adjoint $a_{\BBB}: \Mod(\BBB) \lra \LLL$) . Our aim in this section is to show that $\ttt = \ttt_{\AAA} \boxtimes \ttt_{\BBB}$, $\www = \www_{\AAA} \boxtimes \www_{\BBB}$ and $\LLL = \LLL_{\AAA} \boxtimes \LLL_{\BBB}$ (with left adjoint $a: \Mod(\CCC) \lra \LLL$) correspond as well. 

Let us first look at the relation between $\www$ and $\LLL$. We first note the following:

\begin{proposition}\label{propWL}
The localizing subcategory $\www_1$ (resp. $\www_2$) and the strict localization $\LLL_1$ (resp. $\LLL_2$) correspond under the isomorphism between $W$ and $L$.
\end{proposition}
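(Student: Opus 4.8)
The plan is to verify directly that the pair $(\www_1, \LLL_1)$ satisfies the characterisation of corresponding data recalled in \S\ref{parthree}: namely, that for a strict localization $\LLL \subseteq \ccc$ with exact left adjoint $a$, the associated localizing subcategory is $\www_{\LLL} = \Kern(a) = \{F \mid a(F) = 0\}$, and conversely $\LLL = \www_{\LLL}^{\perp}$. Since we already have the explicit exact left adjoint $a_1: \Mod(\CCC) \lra \LLL_1$ with $a_1(F)(A,B) = (a_{\AAA}(F(-,B)))(A)$, it suffices to identify $\Kern(a_1)$ with $\www_1$; the case of $(\www_2, \LLL_2)$ is symmetric, so I would only treat the first.

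First I would unwind $\Kern(a_1)$. For $F \in \Mod(\CCC)$ we have $a_1(F) = 0$ if and only if $(a_{\AAA}(F(-,B)))(A) = 0$ for all $A \in \AAA$ and $B \in \BBB$, i.e.\ $a_{\AAA}(F(-,B)) = 0$ in $\LLL_{\AAA}$ for every $B \in \BBB$. Now $\www_{\AAA}$ and $\LLL_{\AAA}$ correspond, so $\Kern(a_{\AAA}) = \www_{\AAA}$; hence $a_{\AAA}(F(-,B)) = 0$ precisely when $F(-,B) \in \www_{\AAA}$. Therefore $a_1(F) = 0$ $\iff$ $F(-,B) \in \www_{\AAA}$ for all $B$, which is exactly the defining condition of $\www_1$. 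Thus $\Kern(a_1) = \www_1$, and since $\LLL_1$ is a strict localization with exact left adjoint $a_1$ whose kernel is $\www_1$, the pair $(\www_1, \LLL_1)$ corresponds under the duality between $W$ and $L$. (One checks painlessly that $\www_1$ is closed under subquotients, extensions and coproducts — this is the Proposition stated just above in the paper — so that it genuinely lies in $W$; this is needed only to invoke the duality as stated.)

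I do not expect a serious obstacle here: everything reduces to the fact that a kernel, a left adjoint, and a limit/colimit in the functor category $\Mod(\CCC) = \Fun_k(\CCC\op, \Mod(k))$ are all computed ``objectwise'' in the $\AAA$-variable, combined with the already-established correspondence $\Kern(a_{\AAA}) = \www_{\AAA}$ on the $\AAA$-side. The only mild point of care is the bookkeeping identifying the functor $a_1$ applied to $F \in \Mod(\CCC) = \Fun_k(\AAA\op \otimes \BBB\op, \Mod(k))$ with the pointwise application of $a_{\AAA}$ to the $\BBB$-family of $\AAA$-modules $(F(-,B))_{B \in \BBB}$, so that its kernel is the $\BBB$-family-wise kernel; but this is immediate from the formula defining $a_1$ and the fact that a natural transformation of $\CCC$-modules vanishes iff all its components vanish. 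Hence the proposition follows.
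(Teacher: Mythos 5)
Your proposal is correct and follows exactly the paper's route: the paper's proof also reduces the statement to the identity $\Kern(a_1)=\www_1$ (resp.\ $\Kern(a_2)=\www_2$), which it asserts ``by direct inspection'' using the explicit formula for $a_1$, and you have simply written out that inspection, using $\Kern(a_{\AAA})=\www_{\AAA}$ pointwise in the $\BBB$-variable. No gap; your extra remark that $\www_1$ is indeed localizing is covered by the proposition preceding it in the paper, as you note.
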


\begin{proof}
Consider $\www_{\AAA} = \Kern(a_{\AAA})$ and $\www_{\BBB} = \Kern(a_{\BBB})$ and let $\www_1$, $\www_2$ be as defined in \S \ref{partensorlocsub}. By direct inspection, we have $\Kern(a_1) = \www_1$ and $\Kern(a_2) = \www_2$.
\end{proof}

\begin{corollary}\label{corcor}
The localizing subcategory $\www_{\AAA} \boxtimes \www_{\BBB}$ and the strict localization $\LLL_{\AAA} \boxtimes \LLL_{\BBB}$ correspond under the isomorphism between $W$ and $L$.
\end{corollary}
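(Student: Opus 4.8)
The plan is to read off the statement from Proposition \ref{propWL} together with the purely order-theoretic descriptions of the two tensor products already recorded in \eqref{eqWsup} and \eqref{eqLinf}. First I would recall that, in the lattice $W$ of localizing subcategories of $\Mod(\CCC)$, the definition of $\boxtimes$ on localizing subcategories gives $\www_{\AAA} \boxtimes \www_{\BBB} = \www_1 \vee \www_2$, while in the lattice $L$ of strict localizations of $\Mod(\CCC)$ the definition of $\boxtimes$ on strict localizations gives $\LLL_{\AAA} \boxtimes \LLL_{\BBB} = \LLL_1 \wedge \LLL_2 = \LLL_1 \cap \LLL_2$. Hence it suffices to show that $\www_1 \vee \www_2$ and $\LLL_1 \wedge \LLL_2$ correspond under the order-reversing isomorphism between $W$ and $L$ from \S \ref{parthree}, namely $\www \longmapsto \www^{\perp}$.

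By Proposition \ref{propWL} we already know that $\www_1 \leftrightarrow \LLL_1$ and $\www_2 \leftrightarrow \LLL_2$ under this correspondence. An order anti-isomorphism between complete lattices interchanges suprema and infima, so it carries $\www_1 \vee \www_2$ to the infimum $\LLL_1 \wedge \LLL_2$ of the images $\LLL_1$, $\LLL_2$. Concretely, this step is nothing but Corollary \ref{corperp} applied to the two-element family $\{\www_1, \www_2\}$: it yields $(\www_1 \vee \www_2)^{\perp} = \www_1^{\perp} \cap \www_2^{\perp} = \LLL_1 \cap \LLL_2$. Combining this with the identifications of the previous paragraph gives $(\www_{\AAA} \boxtimes \www_{\BBB})^{\perp} = \LLL_{\AAA} \boxtimes \LLL_{\BBB}$, which is exactly the assertion.

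There is no real obstacle here; the corollary is a formal consequence of results already in hand. The only point worth stating explicitly in the write-up is that the isomorphism between $W$ and $L$ described in \S \ref{parthree} is genuinely order-reversing --- equivalently, that $\www \mapsto \www^{\perp}$ reverses inclusions and sends suprema in $W$ to intersections in $L$ --- which is furnished by Corollary \ref{corperp}; everything else is the bookkeeping of matching \eqref{eqWsup} and \eqref{eqLinf} through Proposition \ref{propWL}.
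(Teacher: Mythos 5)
Your proposal is correct and follows essentially the same route as the paper: the paper's proof likewise reduces the statement to Proposition \ref{propWL} combined with the descriptions \eqref{eqWsup} and \eqref{eqLinf}, the order-reversing correspondence $\www \mapsto \www^{\perp}$ between $W$ and $L$ doing the rest. Your explicit appeal to Corollary \ref{corperp} to see that suprema in $W$ go to intersections in $L$ is exactly the implicit justification behind the paper's one-line argument.
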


\begin{proof}
This follows from Proposition \ref{propWL} and equations \eqref{eqWsup} and \eqref{eqLinf}.
\end{proof}

Next we look at the relation between $\ttt$ and $\www$. Again, we must first establish the relation between $\ttt_1$ and $\www_1$ and between $\ttt_2$ and $\www_2$.

\begin{proposition}\label{propTW}
The topology $\ttt_1$ and the localizing subcategory $\www_1$ (resp. the topology $\ttt_2$ and the localizing subcategory $\www_2$) correspond under the isomorphism between $T$ and $L$.
\end{proposition}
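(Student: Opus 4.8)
The plan is to work through the explicit description of the isomorphism $T \cong W$ for the category $\Mod(\CCC)$ recalled in \S\ref{parthree}, and to unwind what the "one-sided" constructions $\ttt_1 = \rrr_{\AAA}^{\mathrm{up}}$ and $\www_1$ become under it. Concretely, the topology corresponding to a localizing subcategory $\www \subseteq \Mod(\CCC)$ is $\ttt_{\www}$, where a sieve $R \subseteq \CCC(-,(A,B))$ lies in $\ttt_{\www}(A,B)$ iff $\CCC(-,(A,B))/R \in \www$. So the statement to prove is: for a sieve $R \subseteq \CCC(-,(A,B))$, one has $R \in \ttt_1(A,B)$ if and only if $\CCC(-,(A,B))/R$, evaluated in the first variable at each fixed $B' \in \BBB$, lands in $\www_{\AAA}$. (The case of $\ttt_2$, $\www_2$ is symmetric and needs no separate argument.) By Corollary \ref{corcor} and the duality between $W$ and $L$, this also pins down the correspondence with $\LLL_1$, so it suffices to match $\ttt_1$ with $\www_1$.

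First I would treat the generators. For $R \in \ttt_{\AAA}(A)$ and $B \in \BBB$, consider the generating cover $R \boxtimes \BBB(-,B) \in \rrr_{\AAA}(A,B)$. Here the quotient $\CCC(-,(A,B))/(R \boxtimes \BBB(-,B))$ is, by the concrete description of the tensor product sieve (any section is $\sum \alpha_i \otimes \beta_i$ with $\alpha_i \in R$), naturally identified with $(\AAA(-,A)/R) \otimes \BBB(-,B)$, because $\BBB(-,B)$ is a flat (indeed projective) $\BBB$-module so $-\otimes\BBB(-,B)$ is exact. Evaluating in the first variable at a fixed $B' \in \BBB$ gives $(\AAA(-,A)/R) \otimes \BBB(B',B)$, a coproduct of copies of $\AAA(-,A)/R \in \www_{\AAA}$ (using $R \in \ttt_{\AAA}(A) \Leftrightarrow \AAA(-,A)/R \in \www_{\AAA}$), hence in $\www_{\AAA}$ since $\www_{\AAA}$ is closed under coproducts. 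Thus each generator of $\ttt_1$ lies in $\ttt_{\www_1}$.

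Then I would argue in both directions using that $\ttt_1$ is already a topology (part (1) of the Proposition following Definition \ref{deftensortop}) and that $\ttt_{\www_1}$ is a topology: the inclusion $\ttt_1 \subseteq \ttt_{\www_1}$ follows since $\ttt_1 = \rrr_{\AAA}^{\mathrm{up}}$ is the up-closure of the generators and $\ttt_{\www_1}$ is up-closed (a larger sieve has a smaller, hence still "in $\www_1$", quotient because $\www_1$ is closed under quotients by the Proposition preceding the tensor product definition in \S\ref{partensorlocsub}). For the reverse inclusion $\ttt_{\www_1} \subseteq \ttt_1$, suppose $\CCC(-,(A,B))/R \in \www_1$, i.e. $(\CCC(-,(A,B))/R)(-,B') \in \www_{\AAA}$ for all $B'$. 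I would exhibit, for each section of $R$ of the form needed, a subsieve of $R$ of generator type; more precisely, restricting $\CCC(-,(A,B))/R$ to the first variable at $B' = B$ along the "diagonal" unit $1_B \in \BBB(B,B)$ recovers $\AAA(-,A)/R'$ for the sieve $R' = R \cap (\AAA(-,A)\otimes k\cdot 1_B)$, which therefore lies in $\www_{\AAA}$, so $R' \in \ttt_{\AAA}(A)$ and $R' \boxtimes \BBB(-,B) \in \rrr_{\AAA}$; checking $R' \boxtimes \BBB(-,B) \subseteq R$ (immediate, since $R$ is a sieve containing $R'$) then gives $R \in \ttt_1(A,B)$ by up-closedness.

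The main obstacle, and the step requiring the most care, is this last reverse inclusion: one must verify that the condition "$\CCC(-,(A,B))/R$ is in $\www_{\AAA}$ in the first variable for every fixed $B'$" genuinely forces $R$ to contain a generating sieve from $\rrr_{\AAA}$ (or a glued/up-closed combination thereof), rather than merely being approximated by such in some weaker sense. The key technical point is the exactness of $-\otimes N$ for $N$ a projective $\BBB$-module together with the fact that restriction along $\BBB(-,B') \hookrightarrow \BBB$-module structure commutes with the relevant colimits; I would lean on the concrete "$\sum \alpha_i \otimes \beta_i$" presentation of tensor product sieves throughout, and on Lemma \ref{lem1} to control pullbacks when verifying the sieve/topology axioms. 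Once the generator correspondence and up-closedness are in hand, the two topologies agree because each is the smallest (localizing, hence up-closed) cover system containing the same generators.
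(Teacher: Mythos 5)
Your proposal is essentially correct, and its first half coincides with the paper's argument: the paper likewise computes $\CCC(-,(A,B))/(R\boxtimes\BBB(-,B))\cong(\AAA(-,A)/R)\otimes\BBB(-,B)$ (only right exactness of the tensor product is used here, so your appeal to flatness/projectivity of $\BBB(-,B)$ is unnecessary -- and as stated it is also unjustified, since $\BBB(B',B)$ need not be $k$-flat; similarly $(\AAA(-,A)/R)\otimes\BBB(B',B)$ is in general only a \emph{colimit} (a quotient of a coproduct) of copies of $\AAA(-,A)/R$, not a coproduct, which still suffices because $\www_{\AAA}$ is closed under quotients and coproducts). Where you genuinely diverge is in the second half: the paper never proves $\ttt_{\www_1}\subseteq\ttt_1$ directly, but instead shows $\www_1\subseteq\www_{\ttt_1}$ by checking that every $F\in\www_1$ is a null presheaf for $\ttt_1$ (given $x\in F(A,B)$, choose $R\in\ttt_{\AAA}$ killing $x$ in the first variable and observe that $R\boxtimes\BBB(-,B)$ kills $x$), and then concludes via the order isomorphism between $T$ and $W$. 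Your route, extracting from a sieve $R$ with $\CCC(-,(A,B))/R\in\www_1$ the one-sided sieve $R'(A')=\{a\in\AAA(A',A)\mid a\otimes 1_B\in R(A',B)\}$ and checking $R'\boxtimes\BBB(-,B)\subseteq R$, is the ``$x=[1_{(A,B)}]$'' incarnation of the same computation and works, but one step needs repair: evaluation at $B'=B$ along $1_B$ does \emph{not} recover $\AAA(-,A)/R'$; the map $a\mapsto[a\otimes 1_B]$ only gives a monomorphism $\AAA(-,A)/R'\hookrightarrow(\CCC(-,(A,B))/R)(-,B)$, since $(\CCC(-,(A,B))/R)(A',B)=(\AAA(A',A)\otimes\BBB(B,B))/R(A',B)$ is bigger whenever $\BBB(B,B)\neq k\cdot 1_B$. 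The conclusion $\AAA(-,A)/R'\in\www_{\AAA}$, hence $R'\in\ttt_{\AAA}(A)$, still holds because $\www_{\AAA}$ is localizing and in particular closed under subobjects, so the gap is local and easily closed; with that fix, your verification that $R'\boxtimes\BBB(-,B)\subseteq R$ (precompose $\alpha\otimes 1_B$ with $1_{A'}\otimes\beta$ and use that $R$ is a submodule) completes a valid alternative proof. The trade-off is that the paper's version avoids any appeal to closure under subobjects and handles arbitrary elements $x$ at once, while yours stays entirely on the side of topologies and makes the containment of a generating sieve inside $R$ explicit.
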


\begin{proof}
It suffices to show the following inclusions: (1) $\ttt_1 \subseteq \ttt_{\www_1}$, (2) $\www_1 \subseteq \www_{\ttt_1}$.

For (1) consider, for $R \in \ttt_{\AAA}(A)$ and $B \in \BBB$, the sieve $R \boxtimes \BBB(-,B)$. The exact sequence $R \lra \AAA(-,A) \lra \AAA(-,A)/R \lra 0$ gives rise to the exact sequence
$R \otimes \BBB(-,B) \lra \CCC(-,(A,B)) \lra (\AAA(-,A)/R) \otimes \BBB(-,B) \lra 0$ and hence $F = \CCC(-, (A,B))/(R \boxtimes \BBB(-,B)) \cong (\AAA(-,A)/R) \otimes \BBB(-,B)$. It suffices to show that $F \in \www_1$. Now we have $Z = \AAA(-,A)/R \in \www_{\AAA}$ and for $B' \in \BBB$,  $F(-,B') = Z \otimes \BBB(B',B)$ is a colimit of objects in $\www_{\AAA}$, hence it is itself in $\www_{\AAA}$ as desired.

For (2), consider $F \in \www_1$ and $x \in F(A,B) = F(-,B)(A)$. Since $F(-,B) \in \www_{\AAA}$, there exists $R \in \ttt_{\AAA}$ such that for all $f \in R(A')$ we have $F(f \otimes 1): F(A, B) \lra F(A', B): x \longmapsto 0$. It now suffices to consider $R \boxtimes \BBB(-,B) \in \ttt_1(A,B)$. For every element $h = \sum_i f_i \otimes g_i = \sum_i(1 \otimes g_i)(f_i \otimes 1) \in (R \boxtimes \BBB(-,B))(A', B')$, we have $F(h)(x) = 0$ as desired.
\end{proof}

\begin{corollary}
The topology $\ttt_{\AAA} \boxtimes \ttt_{\BBB}$ and the localizing subcategory $\www_{\AAA} \boxtimes \www_{\BBB}$ correspond under the isomorphism between $T$ and $W$.
\end{corollary}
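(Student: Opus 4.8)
The plan is to deduce the statement formally from the order-theoretic descriptions of the two tensor products already established, combined with the componentwise correspondence of Proposition \ref{propTW}.

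First I would recall that for $\ccc = \Mod(\CCC)$ the assignment $\ttt \longmapsto \www_{\ttt}$ is an order isomorphism between the lattice $T$ of topologies on $\CCC$ and the lattice $W$ of localizing Serre subcategories of $\Mod(\CCC)$; in particular it preserves arbitrary suprema, which is precisely the content of the proposition asserting $\www_{\sup_i \ttt_i} = \sup_i \www_{\ttt_i}$ recorded in \S \ref{parthree}. Next, by \eqref{eqTsup} we have $\ttt_{\AAA} \boxtimes \ttt_{\BBB} = \ttt_1 \vee \ttt_2$ in $T$, so applying the isomorphism yields $\www_{\ttt_{\AAA} \boxtimes \ttt_{\BBB}} = \www_{\ttt_1} \vee \www_{\ttt_2}$. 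By Proposition \ref{propTW} we have $\www_{\ttt_1} = \www_1$ and $\www_{\ttt_2} = \www_2$, and finally by \eqref{eqWsup} we have $\www_1 \vee \www_2 = \www_{\AAA} \boxtimes \www_{\BBB}$. Chaining these equalities gives $\www_{\ttt_{\AAA} \boxtimes \ttt_{\BBB}} = \www_{\AAA} \boxtimes \www_{\BBB}$, which is the assertion.

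Since the argument is a purely formal manipulation in the two lattices, there is no genuine obstacle at this stage: the substantive work has already been carried out, namely the componentwise correspondences $\ttt_1 \leftrightarrow \www_1$ and $\ttt_2 \leftrightarrow \www_2$ (Proposition \ref{propTW}) and the identification of both tensor products with the respective suprema. The only point that deserves a word is that the isomorphism $T \cong W$ is an isomorphism of \emph{lattices}, so that it transports $\vee$ to $\vee$; if one preferred not to cite the abstract preservation-of-suprema statement, one could instead argue directly that $\www_{\ttt_1 \vee \ttt_2}$ is the smallest localizing subcategory containing $\www_{\ttt_1} \cup \www_{\ttt_2} = \www_1 \cup \www_2$ (using monotonicity of the isomorphism in both directions), which by Corollary \ref{corWsup} equals $\langle \www_1 \cup \www_2 \rangle_{\mathrm{sloc}} = \www_1 \vee \www_2$.
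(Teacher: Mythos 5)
Your argument is correct and is exactly the paper's proof: the corollary is deduced from Proposition \ref{propTW} together with the identifications $\ttt_{\AAA} \boxtimes \ttt_{\BBB} = \ttt_1 \vee \ttt_2$ \eqref{eqTsup} and $\www_{\AAA} \boxtimes \www_{\BBB} = \www_1 \vee \www_2$ \eqref{eqWsup}, using that the lattice isomorphism $T \cong W$ preserves suprema as recorded in \S \ref{parthree}. You have merely spelled out the formal chain (and an optional variant via Corollary \ref{corWsup}) that the paper leaves implicit.
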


\begin{proof}
This follows from Proposition \ref{propTW} and equations \eqref{eqTsup} and \eqref{eqWsup}.
\end{proof}

\subsection{Exact categories}\label{parparex}

The following setup from \cite{kaledinlowen} inspired our definitions. Let $\AAA$ be an exact category in the sense of Quillen, and let $\Lex(\AAA) \subseteq \Mod(\AAA)$ be the full subcategory of left exact functors, that is additive functors $F: \AAA^{\op} \lra \Mod(k)$ which send conflations $0 \lra K \lra D \lra C \lra 0$ to short exact sequences $0 \lra F(C) \lra F(D) \lra F(K)$. The category $\AAA$ can be endowed with the single deflation topology, for which a sieve is covering if and only if it contains a deflation for the exact structure. The category of sheaves for this topology is precisely $\Lex(\AAA)$, and the corresponding localizing Serre subcategory $\www_{\AAA}$ is the category of weakly effaceable modules, that is, modules $M \in \Mod(\AAA)$ such that for every $x \in M(A)$ there exists a deflation $A' \lra A$ for which $M(A) \lra M(A')$ maps $x$ to zero. 

Now let $\AAA$ and $\BBB$ be exact categories. In \cite[\S 2.6]{kaledinlowen}, the full category $\mathsf{Lex}(\BBB, \AAA^{\op}) \subseteq \Mod(\BBB \otimes \AAA^{\op})$ of bimodules $M$ for which every $M(-,A) \in \Lex(\BBB)$ and every $M(B,-) \in \Lex(\AAA^{\op})$ for the natural ``dual'' exact structure on $\AAA^{\op}$ is introduced. With the definition from \S \ref{partensorstrict}, we thus have 
$$\mathsf{Lex}(\BBB, \AAA^{\op}) = \Lex(\BBB) \boxtimes \Lex(\AAA^{\op})$$
and in \cite[Prop. 2.22]{kaledinlowen}, the relation with the localizing Serre subcategory $\www_{\BBB} \boxtimes \www_{\AAA^{\op}}$ from Corollary \ref{corcor} was demonstrated using the description of the Gabri\"el product.

In \cite{kaledinlowen}, it is argued that $\mathsf{Lex}(\BBB, \AAA^{\op})$ is the correct bimodule category to consider between exact categories, where we look at bimodules contravariant in $B \in \BBB$ and covariant in $A \in \AAA$. In particular, it is shown that over a field $k$, Hochschild cohomology of $\AAA$ in the sense of \cite{keller6}, and of $\Lex(\AAA)$ in the sense of \cite{lowenvandenberghhoch} is equal to $$HH^n(\AAA) = \Ext^n_{\Lex(\AAA, \AAA^{\op})}(1_{\AAA}, 1_{\AAA}).$$
It is not clear how this approach could be extended to more general sites, as it makes essential use of the existence of a natural ``dual site'' for the site associated to an exact category.

\section{Functoriality of the tensor product of linear sites}\label{parparfun}
Let $\AAA$ and $\BBB$ be $k$-linear categories as before. Let us return to the starting point for our quest for a tensor product $\boxtimes$ between Grothendieck abelian categories, namely the requirement that
\begin{equation}\label{eqmodlin2}
\Mod(\AAA) \boxtimes \Mod(\BBB) = \Mod(\AAA \otimes \BBB).
\end{equation}
Using the $2$-categorical structure of the category $\Cat(k)$ of $k$-linear categories, functors and natural transformations, it is not hard to see that $\boxtimes$ can be defined based upon \eqref{eqmodlin2} for module categories $\ccc$. A module category $\ccc$ is intrinsically characterized by the existence of a set of  finitely generated projective generators, and different choices of generators give rise to Morita equivalent linear categories. For Morita bimodules $M$ between $\AAA$ and $\AAA'$ and $N$ between $\BBB$ and $\BBB'$, it is readily seen that $M \otimes N$ with $M \otimes N((A,B), (A', B')) = M(A,A') \otimes N(B,B')$ defines a Morita bimodule between $\AAA \otimes \BBB$ and $\AAA' \otimes \BBB'$. Our aim in this section is to develop the necessary tools in order to extend this situation from module categories to arbitrary Grothendieck categories. Rather than focussing on bimodules, we first focus on functors between sites. 
The underlying idea is that any equivalence between sheaf categories can be represented by a roof of LC functors between sites, where an LC functor is a particular kind of functor which induces an equivalence between sheaf categories. Roughly speaking, an LC functor $\phi: (\AAA, \ttt_{\AAA}) \lra (\CCC, \ttt_{\CCC})$ is generating with respect to $\ttt_{\CCC}$, fully faithfull up to $\ttt_{\AAA}$, and has $\phi^{-1} \ttt_{\CCC} = \ttt_{\AAA}$ (Definition \ref{defLC}). The main result of this section is that LC functors are preserved under tensor product of sites (Proposition \ref{propLC}).

\subsection{Functors}\label{parfun}

Consider linear categories $\AAA$ and $\CCC$ and a linear functor $\phi: \AAA \lra \CCC$. Suppose $\ttt_{\AAA}$ and $\ttt_{\CCC}$ are cover systems on the respective categories.

\begin{definition}\label{defct}
Suppose $\ttt_{\AAA}$ and $\ttt_{\CCC}$ are localizing. The functor $\phi: (\AAA, \ttt_{\AAA}) \lra (\CCC, \ttt_{\CCC})$ is called \emph{continuous} provided that $\phi^{\ast}: \Mod(\CCC) \lra \Mod(\AAA)$ preserves sheaves, and hence restricts to a functor $\phi_s: \Sh(\CCC, \ttt_{\CCC}) \lra \Sh(\AAA, \ttt_{\AAA})$. 
\end{definition}

\begin{definition}\label{defcoverct}\cite[Def. 2.11]{lowenlin}
The functor $\phi: (\AAA, \ttt_{\AAA}) \lra (\CCC, \ttt_{\CCC})$ is called \emph{cocontinuous} provided that for every $A \in \AAA$ and $R \in \ttt_{\CCC}(\phi(A))$ there exists $S \in \ttt_{\AAA}(A)$ with $\phi S \subseteq R$.
\end{definition}

\begin{remark}
Suppose $\ttt_{\AAA}$ and $\ttt_{\CCC}$ are topologies. Continuous morphisms are the linear counterpart of the continuous morphisms from \cite{artingrothendieckverdier1}, and cocontinuous morphisms are the linear counterpart of the cocontinuous morphisms from \cite{artingrothendieckverdier1}. In \cite{lowenlin}, the term ``cover continuous'' is used for what we call cocontinuous here.
\end{remark}

Next we recall some special conditions (see \cite[\S 2.5]{lowenlin}).

\begin{definition}\label{defLC}
Consider a linear functor $\phi: \AAA \lra \CCC$. 
\begin{enumerate}
\item Suppose $\CCC$ is endowed with a cover system $\ttt_{\CCC}$. We say that $\phi: \AAA \lra (\CCC, \ttt_{\CCC})$ satisfies 
\begin{itemize}
\item[(G)] if for every $C \in \CCC$ there is a covering family $(\phi(A_i) \lra C)_i$ for $\ttt_{\CCC}$.
\end{itemize}
\item Suppose $\AAA$ is endowed with a cover system $\ttt_{\AAA}$. We say that $\phi: (\AAA, \ttt_{\AAA}) \lra \CCC$ satisfies
\begin{itemize}
\item[(F)] if for every $c: \phi(A) \lra \phi(A')$ in $\CCC$ there exists a covering family $a_i: A_i \lra A$ for $\ttt_{\AAA}$ and $f_i: A_i \lra A'$ with $c\phi(a_i) = \phi({f}_i)$;
\item[(FF)] if for every $a: A \lra A'$ in $\AAA$ with $\phi(a) = 0$ there exists a covering family $a_i: A_i \lra A$ for $\ttt_{\AAA}$ with $aa_i = 0$. 
\end{itemize}
\item Suppose $\AAA$ and $\CCC$ are endowed with cover systems $\ttt_{\AAA}$ and $\ttt_{\CCC}$ respectively. We say that $\phi: (\AAA, \ttt_{\AAA}) \lra (\CCC, \ttt_{\CCC})$ satisfies
\begin{itemize}
\item[(LC)] if $\phi$ satisfies (G) with respect to $\ttt_{\CCC}$, (F) and (FF) with respect to $\ttt_{\AAA}$, and we further have $\ttt_{\AAA} = \phi^{-1} \ttt_{\CCC}$.
\end{itemize}
\end{enumerate}
\end{definition}

We have the following ``Lemme de comparaison'' (see \cite{artingrothendieckverdier1}, \cite{lowenGP}, \cite{lowenlin}):
\begin{theorem}
If $\phi: (\AAA, \ttt_{\AAA}) \lra (\CCC, \ttt_{\CCC})$ satisfies (LC) and $\ttt_{\CCC}$ is a topology, then $\ttt_{\AAA}$ is a topology, $\phi$ is continuous and $\phi_s: \Sh(\CCC, \ttt_{\CCC}) \lra \Sh(\AAA, \ttt_{\AAA})$ is an equivalence of categories. 
\end{theorem}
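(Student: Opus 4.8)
The plan is to prove the "Lemme de comparaison" by verifying each of its assertions in turn, using the hypothesis (LC) and the structural results on linear sites recalled in \S\ref{parlin}. Throughout, write $\phi^{\ast}: \Mod(\CCC) \lra \Mod(\AAA)$ for restriction along $\phi$ and $\phi_{!}: \Mod(\AAA) \lra \Mod(\CCC)$ for its left adjoint (left Kan extension). The overall strategy is: (1) show $\ttt_{\AAA}$ is a topology; (2) show $\phi$ is continuous, so $\phi^{\ast}$ restricts to $\phi_s: \Sh(\CCC, \ttt_{\CCC}) \lra \Sh(\AAA, \ttt_{\AAA})$; (3) produce a quasi-inverse to $\phi_s$, namely the sheafification of $\phi_{!}$, and check the unit and counit are isomorphisms using (G), (F) and (FF).

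First I would establish that $\ttt_{\AAA}$ is a topology. Since $\ttt_{\AAA} = \phi^{-1}\ttt_{\CCC}$ and $\ttt_{\CCC}$ is a topology, the identity and pullback axioms for $\ttt_{\AAA}$ follow formally from those for $\ttt_{\CCC}$ together with the fact that pulling back a sieve along $\phi$ commutes with preimage; this is essentially the statement that $\phi^{-1}$ of a localizing system is localizing, which is routine. For the glueing axiom one uses condition (F): given $S \in \ttt_{\AAA}(A)$ and, for each $a: A' \lra A$ in $S$, a sieve $S_a \in \ttt_{\AAA}(A')$, one must see that the composite sieve is again in $\ttt_{\AAA}(A)$. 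Translating through $\phi$ and using (F) to lift morphisms $\phi(A') \lra \phi(A)$ to morphisms in $\AAA$ up to a cover, the glueing property of $\ttt_{\CCC}$ yields the claim. This is the step where (F) first enters in an essential way.

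Next, continuity of $\phi$: I would show directly that if $F \in \Mod(\CCC)$ is a $\ttt_{\CCC}$-sheaf then $\phi^{\ast}F$ is a $\ttt_{\AAA}$-sheaf. Using $\ttt_{\AAA} = \phi^{-1}\ttt_{\CCC}$, a covering sieve of $A$ is of the form $\phi^{-1}R$ for $R \in \ttt_{\CCC}(\phi(A))$, and one checks that $(\phi^{\ast}F)(A) \lra \Hom(\phi^{-1}R, \phi^{\ast}F)$ is an isomorphism by comparing with $F(\phi(A)) \lra \Hom(R, F)$; surjectivity of $\phi$ onto its image and conditions (F), (FF) guarantee that a compatible family on $\phi^{-1}R$ over $\AAA$ matches the data of a compatible family on $R$ over $\CCC$ (here (FF) handles the ambiguity coming from morphisms killed by $\phi$, and (F) handles morphisms not obviously in the image of $\phi$). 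Hence $\phi_s$ is well-defined.

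For the equivalence, I would define $\psi := a_{\CCC} \circ \phi_{!}: \Mod(\AAA) \lra \Sh(\CCC, \ttt_{\CCC})$, where $a_{\CCC}$ is sheafification, and show $\psi$ restricted to $\Sh(\AAA, \ttt_{\AAA})$ is quasi-inverse to $\phi_s$. The counit $\phi_{!}\phi^{\ast} \lra \mathrm{id}$ becomes an isomorphism after sheafification precisely because (G) ensures the objects $\phi(A)$ generate $\Sh(\CCC, \ttt_{\CCC})$, so it suffices to evaluate at $\phi(A)$, where one computes using the adjunction and the sheaf condition. For the unit $\mathrm{id} \lra \phi^{\ast}\psi$ on a sheaf $G \in \Sh(\AAA, \ttt_{\AAA})$, one evaluates at $A \in \AAA$ and uses (F) and (FF) to show the map $G(A) \lra (\phi^{\ast}a_{\CCC}\phi_{!}G)(A)$ is an isomorphism: (F) gives surjectivity (every section upstairs comes, locally on a $\ttt_{\AAA}$-cover, from one of $G$) and (FF), combined with $G$ being a sheaf, gives injectivity. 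I expect the main obstacle to be precisely this unit computation — keeping careful track of the interplay between the left Kan extension $\phi_{!}$, sheafification, and the two covering conditions (F) and (FF) simultaneously, since neither alone suffices. (In fact this is exactly the known "Lemme de comparaison" argument of \cite{artingrothendieckverdier1}, \cite{lowenGP}, \cite{lowenlin}, so I would organize the proof to reduce to, or directly cite, the verifications already carried out there, emphasizing only the points where the $k$-linear setting requires the separate hypotheses (F) and (FF) in place of the single full-faithfulness-up-to-cover condition available in the $\Set$-based case.)
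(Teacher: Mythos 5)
The first thing to note is that the paper does not prove this statement at all: it is recalled as a known ``Lemme de comparaison'' with references to \cite{artingrothendieckverdier1}, \cite{lowenGP} and \cite{lowenlin}, so there is no in-paper argument to compare yours against. Your outline is precisely the standard argument carried out in those sources: show $\ttt_{\AAA} = \phi^{-1}\ttt_{\CCC}$ is a topology, check that $\phi^{\ast}$ preserves sheaves so that $\phi_s$ is defined, and exhibit the sheafification of the left Kan extension $\phi_{!}$ as a quasi-inverse, with (G) driving the counit computation and (F), (FF) driving the unit computation. So you are on the same route as the cited literature, and at the level of a plan the structure is sound.

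One caveat on the details, where your sketch slightly misplaces the work. You assert that the identity and pullback (localizing) axioms for $\phi^{-1}\ttt_{\CCC}$ are ``formal'' and that (F) first enters in the glueing axiom. In the $k$-linear setting this is essentially backwards: the identity and glueing axioms for the inverse image system can be verified from the axioms of $\ttt_{\CCC}$ alone (for glueing one decomposes a morphism $c = \sum_i \phi(s_i)c_i$ in the sieve generated by $\phi(S)$ and uses pullbacks and finite intersections of covers in $\ttt_{\CCC}$), whereas the pullback axiom is \emph{not} formal --- one must compare $\phi(a)^{-1}\langle \phi(R)\rangle$ with $\langle \phi(a^{-1}R)\rangle$, and this is exactly where (F) and (FF) are needed (this is why the result cited as \cite[Thm.~2.13]{lowenlin} in the proof of Lemma \ref{lemdelta2} assumes (G), (F) and (FF) to conclude that the inverse image cover system is a topology). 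Similarly, in your continuity step you should not assume every $\ttt_{\AAA}$-cover is literally a preimage sieve $\phi^{-1}R$; rather, every cover is contained in one, and a cofinality argument is required. Since you explicitly propose to reduce to, or cite, the verifications in \cite{artingrothendieckverdier1}, \cite{lowenGP}, \cite{lowenlin}, these slips do not derail the plan, but a self-contained write-up would have to redistribute the use of (F)/(FF) accordingly.
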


The following lemma, which is easily proven by induction, will be used later on:

\begin{lemma}\label{lemcoverind}
Suppose the functor $\phi: (\AAA, \ttt_{\AAA}) \lra (\CCC, \ttt_{\CCC})$ satisfies (F) and $\ttt_{\AAA}$ is a topology and consider morphisms $c_i: \phi(A) \lra \phi(A')$ for $i = 1, \dots, n$. There exists a collection of morphisms $h_j: A_i \lra A$ for $j \in J$ with $\langle h_j \rangle \in \ttt_{\AAA}$ and $g_{ij}: A_i \lra A'$ such that $c_i \phi(h_j) = \phi(g_{ij})$ for all $i = 1, \dots, n$ and $j \in J$.
\end{lemma}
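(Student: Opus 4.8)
The plan is to prove this by induction on $n$, the number of morphisms $c_i$. The case $n = 1$ is precisely condition (F): given $c_1 : \phi(A) \lra \phi(A')$, condition (F) yields a covering family $(a_i : A_i \lra A)_i$ for $\ttt_{\AAA}$ together with $f_i : A_i \lra A'$ satisfying $c_1 \phi(a_i) = \phi(f_i)$; taking $J$ to be the index set of this family, $h_j = a_j$ and $g_{1j} = f_j$ gives the claim (here we use that $\ttt_{\AAA}$ is a topology, so the sieve generated by a covering family is covering).

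For the inductive step, suppose the statement holds for $n - 1$ morphisms. Given $c_1, \dots, c_n : \phi(A) \lra \phi(A')$, apply the induction hypothesis to $c_1, \dots, c_{n-1}$ to obtain a covering sieve $\langle h_j \mid j \in J \rangle \in \ttt_{\AAA}$ with morphisms $g_{ij} : A_j \lra A'$ (renaming the sources $A_j$) such that $c_i \phi(h_j) = \phi(g_{ij})$ for $i \leq n-1$ and all $j \in J$. Now I would handle $c_n$ one piece at a time: for each fixed $j \in J$, the composite $c_n \phi(h_j) : \phi(A_j) \lra \phi(A')$ is a morphism between objects in the image of $\phi$, so condition (F) applied to it yields a covering family $(b_{jk} : A_{jk} \lra A_j)_k$ for $\ttt_{\AAA}$ with morphisms $g_{njk} : A_{jk} \lra A'$ satisfying $c_n \phi(h_j b_{jk}) = c_n \phi(h_j)\phi(b_{jk}) = \phi(g_{njk})$. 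I would then take the new index set to be $\{(j,k)\}$, the new covering morphisms to be $h_j b_{jk} : A_{jk} \lra A$, the new final-target morphisms $g_{n,(j,k)} = g_{njk}$, and for $i \leq n-1$ set $g_{i,(j,k)} = g_{ij} b_{jk} : A_{jk} \lra A'$, so that $c_i \phi(h_j b_{jk}) = c_i \phi(h_j) \phi(b_{jk}) = \phi(g_{ij}) \phi(b_{jk}) = \phi(g_{ij} b_{jk})$.

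The one point requiring care — and the main (though minor) obstacle — is checking that the refined family $\langle h_j b_{jk} \rangle$ is still covering for $\ttt_{\AAA}$. This is where the hypothesis that $\ttt_{\AAA}$ is a topology is essential: since $\langle h_j \rangle$ is a covering sieve and, for each $j$, the family $(b_{jk})_k$ generates a covering sieve on $A_j$ with $h_j$ in the domain, the glueing (local character) axiom for topologies guarantees that the sieve generated by the composites $h_j b_{jk}$ on $A$ is covering. More precisely, one observes that this sieve contains the ``composite'' of the covering sieve $\langle h_j \rangle$ with the covering sieves $\langle b_{jk} \rangle_k$ pulled back appropriately, and invokes the glueing axiom exactly as in the definition of a linear topology from \cite[\S 2.2]{lowenlin}. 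Everything else is a bookkeeping exercise in composing morphisms, which is why the lemma is stated as ``easily proven by induction''.
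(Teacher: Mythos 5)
Your proof is correct and is exactly the induction the paper has in mind: the paper omits the argument (stating only that the lemma ``is easily proven by induction''), and your inductive step --- refining the covering for $c_1,\dots,c_{n-1}$ by applying (F) to each $c_n\phi(h_j)$ and invoking the glueing axiom to see that the sieve generated by the composites $h_jb_{jk}$ is still covering --- is the same mechanism the paper itself uses in the proof of Lemma \ref{lemF}. No gaps; the point you flag about coverings of composites is handled correctly via the glueing (local character) axiom for the topology $\ttt_{\AAA}$.
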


\subsection{Tensor product of functors}\label{partensorfun}

Consider linear categories $\AAA$, $\BBB$, $\CCC$ and $\DDD$ and
linear functors $\phi: \AAA \lra \CCC$ and $\psi: \BBB \lra \DDD$. 
Consider the tensor product functor
$$\phi \otimes \psi: \AAA \otimes \BBB \lra \CCC \otimes \DDD.$$
We have $\phi \otimes \psi = (1 \otimes \psi)(\phi \otimes 1)$ for $$\phi \otimes 1: \AAA \otimes \BBB \lra \CCC \otimes \BBB$$ and $$1 \otimes \psi: \CCC \otimes \BBB \lra \CCC \otimes \DDD.$$ 
Suppose $\ttt_{\AAA}$, $\ttt_{\BBB}$, $\ttt_{\CCC}$ and $\ttt_{\DDD}$ are cover systems on the respective categories.

\begin{proposition}
Suppose all cover systems are localizing. If $\phi$ and $\psi$ are continuous, then so is $\phi \otimes \psi$.
\end{proposition}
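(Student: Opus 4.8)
The plan is to reduce to the two ``one-variable'' situations by means of the factorization $\phi \otimes \psi = (1_{\CCC} \otimes \psi) \circ (\phi \otimes 1_{\BBB})$ recorded in \S\ref{partensorfun}. First I would observe that continuity is automatically stable under composition of site functors: for composable continuous $u$ and $v$ one has $(vu)^{\ast} = u^{\ast} v^{\ast}$ on module categories, so a composite of sheaf-preserving restrictions is sheaf-preserving. Since $(\phi \otimes \psi)^{\ast} = (\phi \otimes 1_{\BBB})^{\ast} \circ (1_{\CCC} \otimes \psi)^{\ast}$, it therefore suffices to prove that $\phi \otimes 1_{\BBB} : (\AAA \otimes \BBB, \ttt_{\AAA} \boxtimes \ttt_{\BBB}) \lra (\CCC \otimes \BBB, \ttt_{\CCC} \boxtimes \ttt_{\BBB})$ is continuous whenever $\phi$ is, together with the mirror statement for $1_{\CCC} \otimes \psi$ obtained by interchanging the two tensor factors; by that symmetry I need only handle $\phi \otimes 1_{\BBB}$.

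The key tool I would extract first is a ``variable-wise'' description of the sheaves on a tensor product site. Chasing through the identity $\ttt_{\AAA} \boxtimes \ttt_{\BBB} = \ttt_1 \vee \ttt_2$ from \S\ref{partensortop} and the correspondences of \S\ref{parthree} and \S\ref{partensorrel} --- using that the duality between localizing subcategories and strict localizations sends $\vee$ to $\cap$ (cf.\ equation \eqref{eqLinf} and Proposition \ref{propinfloc}), together with the explicit form of $\LLL_1$ and $\LLL_2$ from \S\ref{partensorstrict} --- one obtains, writing an object $F$ of $\Mod(\AAA \otimes \BBB)$ as the family $B \longmapsto F(-,B)$ of $\AAA$-modules (equivalently as the family $A \longmapsto F(A,-)$ of $\BBB$-modules), that
\[
\Sh(\AAA \otimes \BBB, \ttt_{\AAA} \boxtimes \ttt_{\BBB}) = \{\, F \mid F(-,B) \in \Sh(\AAA, \ttt_{\AAA})\ \forall B \in \BBB,\ \ F(A,-) \in \Sh(\BBB, \ttt_{\BBB})\ \forall A \in \AAA \,\},
\]
and likewise with $\CCC \otimes \BBB$ in place of $\AAA \otimes \BBB$.

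Granting this, continuity of $\phi \otimes 1_{\BBB}$ is a direct verification. Given $F \in \Sh(\CCC \otimes \BBB, \ttt_{\CCC} \boxtimes \ttt_{\BBB})$, put $G = (\phi \otimes 1_{\BBB})^{\ast} F$, so that $G(A,B) = F(\phi(A),B)$. For each fixed $B$ we have $G(-,B) = \phi^{\ast}(F(-,B))$, and $F(-,B)$ is a $\ttt_{\CCC}$-sheaf by the description above, whence $G(-,B) \in \Sh(\AAA, \ttt_{\AAA})$ since $\phi$ is continuous. For each fixed $A$ we have $G(A,-) = F(\phi(A),-)$, which is already a $\ttt_{\BBB}$-sheaf because $F$ is. Hence $G \in \Sh(\AAA \otimes \BBB, \ttt_{\AAA} \boxtimes \ttt_{\BBB})$, so $\phi \otimes 1_{\BBB}$ is continuous; the mirror statement and the composition argument of the first paragraph then give the proposition.

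I do not expect a genuine obstacle here; the argument is essentially bookkeeping. The one step that needs care is the derivation of the variable-wise description of $\Sh(\AAA \otimes \BBB, \ttt_{\AAA} \boxtimes \ttt_{\BBB})$ from the lattice identities and correspondences of \S\ref{partensortop}--\S\ref{partensorrel}, together with the routine check on representables that under the identification $\Mod(\AAA \otimes \BBB) \simeq \Fun_k(\BBB^{\op}, \Mod(\AAA))$ the restriction functor $(\phi \otimes 1_{\BBB})^{\ast}$ corresponds to $F \longmapsto \bigl( B \mapsto \phi^{\ast}(F(-,B)) \bigr)$.
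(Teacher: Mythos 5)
Your proposal is correct and follows essentially the same route as the paper: the whole proof rests on the variable-wise description of $\Sh(\CCC \otimes \DDD, \ttt_{\CCC} \boxtimes \ttt_{\DDD})$ coming from \S\ref{partensorstrict} and \S\ref{partensorrel}, after which the sheaf condition for $(\phi\otimes\psi)^{\ast}F$ is checked in each variable separately. The preliminary reduction via the factorization $\phi\otimes\psi = (1\otimes\psi)(\phi\otimes 1)$ is harmless but unnecessary, since the paper verifies both variables of $F(\phi(-),\psi(-))$ at once.
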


\begin{proof}
We have to look at the functor $(\phi \otimes \psi)^{\ast}: \Mod(\CCC \otimes \DDD) \lra \Mod(\AAA \otimes \BBB)$. According to \S \ref{partensorstrict}, \S \ref{partensorrel} we have $F \in \Sh(\CCC \otimes \DDD, \ttt_{\CCC} \boxtimes \ttt_{\DDD})$ if and only if $F(-,-)$ is a sheaf in both variables for $\ttt_{\CCC}$ and $\ttt_{\DDD}$ respectively. It readily follows that $(\phi \otimes \psi)^{\ast}F = F(\phi(-), \psi(-))$ is a sheaf in the first variable for $\ttt_{\AAA}$ as soon as $\phi$ is continuous, and a sheaf in the second variable for $\ttt_{\BBB}$ as soon as $\psi$ is continuous.
\end{proof}

The following is easy to check:

\begin{lemma}\label{lemcalc}
Consider $A \in \AAA$, $B \in \BBB$, and sieves $R \subseteq \AAA(-,A)$ and $S \subseteq \BBB(-,B)$.  As sieves on $(\phi(A), B)$, we have
$$
\phi(R) \boxtimes S = (\phi \otimes 1)(R \boxtimes S).
$$
\end{lemma}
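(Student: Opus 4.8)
The plan is to prove the identity by unwinding the two sieve operations involved — the image of a sieve under a linear functor and the tensor product of two sieves — and checking that both sides are generated by the same elementary morphisms.

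First I would recall the relevant concrete descriptions. For a linear functor $F : \mathcal{D} \lra \mathcal{E}$ and a sieve $T \subseteq \mathcal{D}(-,D)$, the image sieve $F(T) \subseteq \mathcal{E}(-, F(D))$ is the smallest sieve on $F(D)$ containing all $F(t)$ with $t \in T(D')$; concretely, $F(T)(E)$ is the $k$-submodule of $\mathcal{E}(E, F(D))$ spanned by the morphisms $F(t)\circ e$ with $t \in T(D')$ and $e \in \mathcal{E}(E, F(D'))$, for $D' \in \mathcal{D}$. On the other hand, by the concrete description of the tensor product sieve recorded in \S\ref{partensortop}, for $A'' \in \AAA$ and $B' \in \BBB$ the module $(R \boxtimes S)(A'',B')$ is spanned by the tensors $\alpha \otimes \beta$ with $\alpha \in R(A'')$ and $\beta \in S(B')$, and likewise $(\phi(R) \boxtimes S)(C,B')$ is spanned by $\gamma \otimes \beta$ with $\gamma \in \phi(R)(C)$ and $\beta \in S(B')$.

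Next I would compute $((\phi \otimes 1)(R \boxtimes S))(C, B')$ at an arbitrary object $(C,B') \in \CCC \otimes \BBB$. A generating morphism is of the form $(\phi \otimes 1)(\xi)\circ \eta$ with $\xi \in (R \boxtimes S)(A', B'')$ and $\eta \in (\CCC \otimes \BBB)\bigl((C,B'),(\phi(A'),B'')\bigr) = \CCC(C,\phi(A')) \otimes \BBB(B',B'')$; by $k$-bilinearity of composition I may assume $\xi = \alpha \otimes \beta$ with $\alpha \in R(A')$, $\beta \in S(B'')$ and $\eta = c \otimes b$ with $c : C \lra \phi(A')$, $b : B' \lra B''$. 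Then $(\phi \otimes 1)(\alpha \otimes \beta)\circ(c \otimes b) = (\phi(\alpha)\circ c)\otimes(\beta \circ b)$, which lies in $(\phi(R) \boxtimes S)(C,B')$ because $\phi(\alpha)\circ c \in \phi(R)(C)$ and $\beta \circ b \in S(B')$. This gives $(\phi \otimes 1)(R \boxtimes S) \subseteq \phi(R) \boxtimes S$. For the reverse inclusion I would write a generator of $(\phi(R) \boxtimes S)(C,B')$ as $\gamma \otimes \beta'$ with $\gamma = \phi(\alpha)\circ c$, $\alpha \in R(A')$, $c : C \lra \phi(A')$, $\beta' \in S(B')$, and observe that $\gamma \otimes \beta' = (\phi \otimes 1)(\alpha \otimes \beta')\circ(c \otimes 1_{B'})$ with $\alpha \otimes \beta' \in (R \boxtimes S)(A', B')$, so it lies in $((\phi \otimes 1)(R \boxtimes S))(C,B')$. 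Together the two inclusions give the claimed equality of sieves on $(\phi(A),B)$.

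I do not expect a genuine obstacle: the statement is essentially a bookkeeping exercise. The one point that needs care is keeping straight in which of the categories $\AAA$, $\BBB$, $\CCC$, $\AAA \otimes \BBB$, $\CCC \otimes \BBB$ each object and morphism lives, together with the identification of hom-modules in a tensor product of linear categories; and the reduction to elementary tensors must be justified by the bilinearity of composition, which is also exactly what makes the image of the defining comparison map spanned by products of generators.
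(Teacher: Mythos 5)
Your proof is correct: the paper states Lemma \ref{lemcalc} without proof (it is introduced as ``easy to check''), and your direct verification --- reducing to elementary tensors by bilinearity of composition, checking $(\phi\otimes 1)(R\boxtimes S)\subseteq \phi(R)\boxtimes S$ using that $S$ is a sieve, and the reverse inclusion by precomposing with $c\otimes 1_{B'}$ --- is exactly the intended routine computation. The only point you gloss slightly is that a general element of the image sieve $\phi(R)(C)$ is a $k$-linear combination of composites $\phi(\alpha)\circ c$ rather than a single one, but this is absorbed by the same linearity argument you already invoke, so there is no gap.
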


\begin{proposition}\label{propcovertimes}
If $\phi$ and $\psi$ are cocontinuous, then so is $\phi \otimes \psi$.
\end{proposition}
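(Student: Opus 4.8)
The plan is to exploit the factorization $\phi \otimes \psi = (1_{\CCC} \otimes \psi)\circ(\phi \otimes 1_{\BBB})$ together with the fact that cocontinuous functors compose — immediate from Definition \ref{defcoverct}, by pulling back the given covering sieve through the two stages — so that it suffices to prove that $\phi \otimes 1_{\BBB}\colon (\AAA \otimes \BBB, \ttt_{\AAA} \boxtimes \ttt_{\BBB}) \lra (\CCC \otimes \BBB, \ttt_{\CCC} \boxtimes \ttt_{\BBB})$ is cocontinuous; the cocontinuity of $1_{\CCC} \otimes \psi$ then follows by the same argument with the two tensor factors interchanged.

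To verify the defining condition of Definition \ref{defcoverct} for $\phi \otimes 1_{\BBB}$, I would first reduce to checking it on a cover system generating the target topology, which is legitimate here because the source $\ttt_{\AAA} \boxtimes \ttt_{\BBB}$ is a genuine topology. Since $\ttt_{\CCC} \boxtimes \ttt_{\BBB}$ is generated by the cover systems $\rrr_{\CCC} \cup \rrr_{\BBB}$ of \S\ref{partensortop} (applied to the pair $(\CCC,\BBB)$) and in fact $\ttt_{\CCC} \boxtimes \ttt_{\BBB} = (\rrr_{\CCC} \cup \rrr_{\BBB})^{\mathrm{upglue}}$, every covering sieve $R$ on an object $(\phi(A), B)$ either contains a member of $\rrr_{\CCC} \cup \rrr_{\BBB}$ or arises by a single glueing step from sieves that do. In the glueing step one picks $S_0 \in (\ttt_{\AAA} \boxtimes \ttt_{\BBB})(A,B)$ with $(\phi \otimes 1)(S_0)$ contained in the covering sieve $R_0$ being glued along, observes that each $(\phi \otimes 1)(s)$ with $s \in S_0$ lies over an object of the form $(\phi(A'), B')$ so that the inductive hypothesis applies and supplies a cover at $(A',B')$ mapping into $(\phi \otimes 1)(s)^{-1}R$, and glues the resulting covers inside $\ttt_{\AAA} \boxtimes \ttt_{\BBB}$ using its glueing axiom. (This is the linear counterpart of the classical criterion that cocontinuity is detected on a generating cover system, and could alternatively be quoted from \cite{lowenlin}.)

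It then remains to treat the two kinds of generators at an object $(\phi(A), B)$, where Lemma \ref{lemcalc} does the work. For a generator $R' \boxtimes \BBB(-,B)$ with $R' \in \ttt_{\CCC}(\phi(A))$, cocontinuity of $\phi$ yields $S' \in \ttt_{\AAA}(A)$ with $\phi(S') \subseteq R'$; the sieve $S' \boxtimes \BBB(-,B)$ is one of the generators of $\ttt_{\AAA} \boxtimes \ttt_{\BBB}$ from Definition \ref{deftensortop}, and by Lemma \ref{lemcalc} we get $(\phi \otimes 1)(S' \boxtimes \BBB(-,B)) = \phi(S') \boxtimes \BBB(-,B) \subseteq R' \boxtimes \BBB(-,B)$. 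For a generator $\CCC(-,\phi(A)) \boxtimes S$ with $S \in \ttt_{\BBB}(B)$, the sieve $\AAA(-,A) \boxtimes S$ is likewise such a generator, and Lemma \ref{lemcalc} gives $(\phi \otimes 1)(\AAA(-,A) \boxtimes S) = \phi(\AAA(-,A)) \boxtimes S \subseteq \CCC(-,\phi(A)) \boxtimes S$, the inclusion holding since $\phi(\AAA(-,A)) \subseteq \CCC(-,\phi(A))$. I expect the reduction to generators of the target topology to be the only non-formal point; once it is in place, everything else is bookkeeping with Lemma \ref{lemcalc} and the explicit description of the tensor topology, the crux of the reduction being that the glueing step only ever requires pulling back along morphisms of the form $(\phi \otimes 1)(s)$, which land over objects of the shape $(\phi(A'), B')$ so that the induction can be fed.
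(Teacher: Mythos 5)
Your proposal is correct and follows essentially the same route as the paper: reduce to $\phi\otimes 1$ via composition of cocontinuous functors, reduce the cocontinuity check to a generating cover system for the target (the paper simply cites \cite[Lem.~2.12]{lowenlin} for this, with the two-sided system $\{T\boxtimes S\,|\,T\in\ttt_{\CCC},\,S\in\ttt_{\BBB}\}^{\mathrm{up}}$ in place of your one-sided generators $\rrr_{\CCC}\cup\rrr_{\BBB}$), and then conclude on generators using cocontinuity of $\phi$ together with Lemma \ref{lemcalc}. The only real difference is that you sketch the reduction-to-generators argument by induction over the glueing closure instead of quoting it, and that the paper's choice of two-sided generators $T\boxtimes S$ handles your two cases in a single step via $(\phi\otimes 1)(R\boxtimes S)\subseteq T\boxtimes S$.
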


\begin{proof}
Since cocontinuous functors are stable under composition, it suffices to consider $\psi = 1: (\BBB, \ttt_{\BBB}) \lra (\BBB, \ttt_{\BBB})$. 
We are to show that $\phi \otimes 1: (\AAA \otimes \BBB, \ttt_{\AAA} \boxtimes \ttt_{\BBB}) \lra (\CCC\otimes \BBB, \ttt_{\CCC} \boxtimes \ttt_{\BBB})$ is cocontinuous. By \cite[Lem. 2.12]{lowenlin}, it suffices to show that $\phi \otimes 1$ is cocontinuous with respect to the localizing cover systems $\LLL_{\AAA, \BBB} = \{ R \boxtimes S \,\, |\,\, R \in \ttt_{\AAA}, S \in \ttt_{\BBB} \}^{\mathrm{up}}$ and $\LLL_{\CCC, \BBB} = \{ T \boxtimes S \,\, |\,\, T \in \ttt_{\CCC}, S \in \ttt_{\BBB} \}^{\mathrm{up}}$. Thus, consider $T \boxtimes S$ with $T \in \ttt_{\CCC}(\phi(A)), S \in \ttt_{\BBB}(B)$. By the assumption there exists $R \in \ttt_{\AAA}$ with $\phi R \subseteq T$. Consequently, by Lemma \ref{lemcalc} we have $(\phi \otimes 1)(R \boxtimes S) \subseteq T \boxtimes S$ as desired.
\end{proof}

\begin{lemma}\label{lemG}
Suppose the functor $\phi$ satisfies (G) with respect to $\ttt_{\CCC}$. Then the functor $\phi \otimes 1$ satisfies (G) with respect to $\ttt_{\CCC} \boxtimes \ttt_{\BBB}$.
\end{lemma}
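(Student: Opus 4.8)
The plan is to unwind condition (G) directly and, for each object of $\CCC \otimes \BBB$, produce an explicit covering family for $\ttt_\CCC \boxtimes \ttt_\BBB$ whose members are morphisms out of objects lying in the image of $\phi \otimes 1$. Fix an object $(C,B) \in \CCC \otimes \BBB$. Since $\phi$ satisfies (G) with respect to $\ttt_\CCC$, there is a covering family $(c_i: \phi(A_i) \lra C)_i$ for $\ttt_\CCC$; write $T \subseteq \CCC(-,C)$ for the sieve it generates, so that $T \in \ttt_\CCC(C)$. I would then consider the morphisms
$$
c_i \otimes \mathrm{id}_B: (\phi(A_i), B) \lra (C,B) \quad \text{in } \CCC \otimes \BBB,
$$
and observe that each source $(\phi(A_i), B)$ is precisely $(\phi \otimes 1)(A_i, B)$, so that $(c_i \otimes \mathrm{id}_B)_i$ is a legitimate candidate for a family witnessing (G) for $\phi \otimes 1$.

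The computational heart of the argument is the identification of the sieve on $(C,B)$ generated by $(c_i \otimes \mathrm{id}_B)_i$ with the tensor product sieve $T \boxtimes \BBB(-,B)$. Since $(\CCC \otimes \BBB)(-,(\phi(A_i),B)) = \CCC(-,\phi(A_i)) \otimes \BBB(-,B)$ and direct sums commute with $- \otimes \BBB(-,B)$, the generated sieve is the image of the canonical map
$$
\Big( \bigoplus_i \CCC(-,\phi(A_i)) \Big) \otimes \BBB(-,B) \lra \CCC(-,C) \otimes \BBB(-,B).
$$
The map $\bigoplus_i \CCC(-,\phi(A_i)) \lra \CCC(-,C)$ has image $T$; since $- \otimes \BBB(-,B)$ is right exact it carries the epimorphism $\bigoplus_i \CCC(-,\phi(A_i)) \twoheadrightarrow T$ to an epimorphism, so the displayed image coincides with the image of $T \otimes \BBB(-,B) \lra \CCC(-,C) \otimes \BBB(-,B)$, which is exactly $T \boxtimes \BBB(-,B)$ by the definition $R \boxtimes S = \mathrm{Im}(\phi_{R,S})$.

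To conclude, note that $T \boxtimes \BBB(-,B)$ belongs to the cover system $\rrr_\CCC = \{\, T' \boxtimes \BBB(-,B') \mid T' \in \ttt_\CCC,\ B' \in \BBB \,\}$, which is contained in $\langle \rrr_\CCC \cup \rrr_\BBB \rangle_{\mathrm{top}} = \ttt_\CCC \boxtimes \ttt_\BBB$ by Definition \ref{deftensortop}. Hence $(c_i \otimes \mathrm{id}_B)_i$ is a covering family for $\ttt_\CCC \boxtimes \ttt_\BBB$ consisting of morphisms out of the objects $(\phi \otimes 1)(A_i, B)$, which is exactly what (G) for $\phi \otimes 1$ requires. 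I do not expect a genuine obstacle: the only step needing care is the identification of the generated sieve, and this follows formally from right exactness of $- \otimes \BBB(-,B)$ together with the definition of $R \boxtimes S$; alternatively it can be checked by hand using the concrete description of elements of $(T \boxtimes \BBB(-,B))(A',B')$ as finite sums $\sum_j \alpha_j \otimes \beta_j$ with $\alpha_j \in T(A')$ and $\beta_j \in \BBB(B',B)$, together with the fact that each $\alpha_j$ factors through one of the generators $c_i$.
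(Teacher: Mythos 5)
Your proof is correct and is essentially the paper's argument: the paper takes the covering family $(f_i:\phi(A_i)\lra C)_i$ provided by (G) and notes that $R \boxtimes \BBB(-,B)$ is the sieve generated by the morphisms $f_i \otimes 1_B$, which is exactly the identification you justify via right exactness of $-\otimes\BBB(-,B)$ (the paper leaves this as "easily seen"). The only micro-quibble is in your alternative hands-on check: an element $\alpha_j$ of the generated linear sieve is a finite linear combination of morphisms factoring through the $c_i$, not necessarily a single such composite, but linearity makes this harmless.
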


\begin{proof}
Consider $(C, B) \in \CCC \otimes \BBB$. There exists $R = \langle f_i: \phi(A_i) \lra C \rangle \in \ttt_{\CCC}(C)$. It is easily seen that $R \boxtimes \BBB(-,B) = \langle f_i \otimes 1_B: (\phi \otimes 1)(A_i, B) \lra (C,B) \rangle$.
\end{proof}

\begin{lemma}\label{lemF}
Suppose the functor $\phi$ satisfies (F) with respect to $\ttt_{\AAA}$. Then the functor $\phi \otimes 1$ satisfies (F) with respect to $\ttt_{\AAA} \boxtimes \ttt_{\BBB}$.
\end{lemma}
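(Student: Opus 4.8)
The plan is to mimic the structure of Lemma \ref{lemG}, reducing condition (F) for $\phi \otimes 1$ to condition (F) for $\phi$ by producing, for each morphism in $\CCC \otimes \BBB$ of the form $(\phi \otimes 1)(A,B) \lra (\phi \otimes 1)(A',B')$, a covering sieve on $(A,B)$ for $\ttt_{\AAA} \boxtimes \ttt_{\BBB}$ through which the morphism factors appropriately. Concretely, first I would take such a morphism and write it as $c = \sum_{i=1}^n c_i \otimes g_i \in \CCC(\phi(A), \phi(A')) \otimes \BBB(B, B')$ with $c_i \colon \phi(A) \lra \phi(A')$ and $g_i \colon B \lra B'$. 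The idea is to apply (F) for $\phi$ to the finitely many morphisms $c_1, \dots, c_n$ simultaneously; this is exactly what Lemma \ref{lemcoverind} provides, but it requires $\ttt_{\AAA}$ to be a topology, so I would have to be careful here about what hypothesis is available — alternatively, one can apply (F) to each $c_i$ separately, obtaining covering sieves $R_i \in \ttt_{\AAA}(A)$, and then intersect them. Since $\ttt_{\AAA}$ is only assumed to be a cover system in the ambient setup, but the (F)-condition's usefulness for comparison theorems lives in the topology setting, I expect the statement is used with $\ttt_{\AAA}$ localizing (so that finite intersections of covers are covers); under that mild hypothesis, $R = \cap_{i=1}^n R_i \in \ttt_{\AAA}(A)$ and for every $a \colon A'' \lra A$ in $R(A'')$ there are morphisms $f_i^a \colon A'' \lra A'$ with $c_i \phi(a) = \phi(f_i^a)$.

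Next, I would form the sieve $R \boxtimes \BBB(-,B)$ on $(A,B)$, which lies in $\ttt_{\AAA} \boxtimes \ttt_{\BBB}$ since it is in $\rrr_{\AAA}$. I would then check that this is a covering sieve witnessing (F) for $\phi \otimes 1$ applied to $c$. A general element of $(R \boxtimes \BBB(-,B))(A'', B'')$ has the form $h = \sum_j a_j \otimes b_j$ with $a_j \in R(A'')$ and $b_j \in \BBB(B'', B)$. Composing with $c$ gives $c \circ (\phi\otimes 1)(h) = \sum_{i,j} (c_i \phi(a_j)) \otimes (g_i b_j) = \sum_{i,j} \phi(f_i^{a_j}) \otimes (g_i b_j) = (\phi \otimes 1)\bigl(\sum_{i,j} f_i^{a_j} \otimes (g_i b_j)\bigr)$, and $\sum_{i,j} f_i^{a_j} \otimes (g_i b_j)$ is a morphism $(A'', B'') \lra (A', B')$ in $\AAA \otimes \BBB$. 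Thus $c \circ (\phi\otimes 1)(h)$ factors through $\phi \otimes 1$, which is precisely what (F) for $\phi \otimes 1$ demands for the covering family of generators of $R \boxtimes \BBB(-,B)$.

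The main obstacle, as signalled above, is the bookkeeping around simultaneity: condition (F) as stated produces, for each single morphism $c_i$, its own covering sieve, and to handle the linear combination $c = \sum_i c_i \otimes g_i$ one needs a \emph{common} refinement that works for all the $c_i$ at once. I expect this is handled either by Lemma \ref{lemcoverind} (if $\ttt_{\AAA}$ is a topology — which is the natural hypothesis when (F) is in play) or by the localizing property allowing finite intersections of covers. The second, more mechanical, subtlety is verifying that it suffices to check the factorization condition on a generating family of the covering sieve rather than on all its elements; since the set of morphisms $a \colon A'' \lra A$ for which $c \phi(a)$ factors through $\phi$ forms a sieve, and we have shown it contains a covering sieve, this is automatic. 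Everything else — expanding tensors, applying bilinearity of $\otimes$ over morphism spaces — is routine calculation of the kind the paper elsewhere dispatches with ``it is easily seen''.
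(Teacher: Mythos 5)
Your proposal is correct and follows essentially the paper's own route: the paper's proof is an induction over the terms $c_i \otimes b_i$ which successively refines the covering family of $A$ using (F) and the glueing property, and this is just an inlined form of the simultaneous statement you extract from Lemma \ref{lemcoverind}, after which the bilinearity computation over the one-sided cover $R \boxtimes \BBB(-,B)$ is the same as yours. One small correction to your hedge: closure of covers under finite intersection genuinely needs the glueing axiom and does not follow from the localizing axioms alone, so the operative hypothesis is that $\ttt_{\AAA}$ is a topology --- exactly what the paper implicitly assumes when it invokes the glueing property in this argument.
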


\begin{proof}
Consider a morphism $h = \sum_{i = 1}^n c_i \otimes b_i: (\phi(A), B) \lra (\phi(A'), B')$. We proceed by induction. Suppose we have a collection $(a_{\alpha}: A_{\alpha} \lra A)_{\alpha}$ of morphisms in $\AAA$  with $\langle a_{\alpha} \rangle \in \ttt_{\AAA}(A)$ (and hence $\langle a_{\alpha} \otimes 1_B \rangle \in \ttt_{\AAA} \boxtimes \ttt_{\BBB}(A, B)$) and for $i \in \{ 1, \dots, m-1 \}$ with $m \leq n$ and $\alpha$, there exists $g^i_{\alpha}: (A_{\alpha}, B) \lra (A',B)$ with $(\phi \otimes 1)(g^i_{\alpha}) = (c_i \otimes b_i)(\phi(a_{\alpha}) \otimes 1_B)$. We show that the same holds for $i = m$. To this end, we consider $(c_m \otimes b_m)(\phi(a_{\alpha}) \otimes 1_B) = c_m \phi(a_{\alpha}) \otimes b_m$. For $c_m \phi(a_{\alpha}): \phi(A_{\alpha}) \lra \phi(A')$, since $\phi$ satisfies (F) there is a collection $(a^{\alpha}_{\beta}: A^{\alpha}_{\beta} \lra A_{\alpha})_{\beta}$ with $\langle a^{\alpha}_{\beta} \rangle \in \ttt_{\AAA}(A_{\alpha})$ and there exist morphisms $f^{\alpha}_{\beta}: A^{\alpha}_{\beta} \lra A'$ with $\phi(f^{\alpha}_{\beta}) = c_m \phi(a_{\alpha})\phi(a^{\alpha}_{\beta})$. Consequently, the collection of compositions $a_{\alpha} a^{\alpha}_{\beta}: A^{\alpha}_{\beta} \lra A$ are such that $\langle a_{\alpha} a^{\alpha}_{\beta} \rangle \in \ttt_{\AAA}(A)$ by the glueing property (and hence 
$\langle a_{\alpha} a^{\alpha}_{\beta} \otimes 1_B \rangle \in \ttt_{\AAA} \boxtimes \ttt_{\BBB}(A, B)$). For $i \in \{ 1, \dots, m-1 \}$, we have $(\phi \otimes 1)(g^i_{\alpha}(a^{\alpha}_{\beta} \otimes 1_B)) = (c_i \otimes b_i)(\phi(a_{\alpha}a^{\alpha}_{\beta}) \otimes 1_B)$ by the induction hypothesis. For $m$, we have $(\phi \otimes 1)(f^{\alpha}_{\beta} \otimes b_m) = (c_m \otimes b_m)(\phi(a_{\alpha}a^{\alpha}_{\beta}) \otimes 1_B)$ as desired.
\end{proof}

\begin{lemma}\label{lemFFF}
Suppose the functor $\phi$ satisfies (F) and (FF) with respect to $\ttt_{\AAA}$. Then the functor $\phi \otimes 1$ satisfies (F) and (FF) with respect to $\ttt_{\AAA} \boxtimes \ttt_{\BBB}$.
\end{lemma}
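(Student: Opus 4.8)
The plan is to reduce Lemma \ref{lemFFF} to the already-proven Lemma \ref{lemF} together with a transfinite-refinement argument for (FF). Since Lemma \ref{lemF} already gives (F) for $\phi \otimes 1$, the only new content is (FF). So I would start from a morphism $h = \sum_{i=1}^n c_i \otimes b_i \colon (A,B) \lra (A',B)$ in $\AAA \otimes \BBB$ — note we may take the target second coordinate to be $B$ as well, since the general case follows by composing with identities — with $(\phi \otimes 1)(h) = 0$ in $\CCC \otimes \BBB$, i.e. $\sum_i \phi(c_i) \otimes b_i = 0$ in $\CCC(\phi(A),\phi(A')) \otimes \DDD(B,B)$, here $\DDD = \BBB$. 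The goal is to produce a covering family $(a_\lambda \otimes 1_{B} \colon (A_\lambda, B) \lra (A,B))_\lambda$ in $\ttt_{\AAA} \boxtimes \ttt_{\BBB}$ with $h (a_\lambda \otimes 1_B) = 0$.

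The key step is to use (F) to ``straighten out'' the $\phi(c_i)$ simultaneously: by Lemma \ref{lemcoverind}, applied to the finite family $c_1, \dots, c_n \colon \phi(A) \lra \phi(A')$, there is a covering family $\langle h_j \colon A_j \lra A \rangle_{j \in J} \in \ttt_{\AAA}(A)$ and morphisms $g_{ij} \colon A_j \lra A'$ in $\AAA$ with $\phi(g_{ij}) = c_i \phi(h_j)$ for all $i,j$. Precomposing $h$ with $h_j \otimes 1_B$ then gives $h(h_j \otimes 1_B) = \sum_i c_i\phi(h_j) \otimes b_i$ — wait, more precisely $h(h_j \otimes 1_B) = \sum_i (c_i \otimes b_i)(\phi(h_j) \otimes 1_B) = \sum_i c_i\phi(h_j) \otimes b_i$, which is the image under $\phi \otimes 1$ of $\sum_i g_{ij} \otimes b_i \in \AAA(A_j, A') \otimes \BBB(B,B)$. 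Moreover $(\phi\otimes 1)(\sum_i g_{ij}\otimes b_i) = \sum_i \phi(g_{ij})\otimes b_i = \sum_i c_i\phi(h_j)\otimes b_i = (\sum_i c_i\otimes b_i)(\phi(h_j)\otimes 1_B) = 0$ as well, since $\sum_i c_i \otimes b_i$ restricted along $\phi(h_j)\otimes 1_B$... actually the cleanest observation: $\sum_i \phi(g_{ij}) \otimes b_i = (\phi \otimes 1)(\sum_i g_{ij} \otimes b_i)$, and this equals $\sum_i c_i \phi(h_j) \otimes b_i$, which is the restriction of $\sum_i \phi(c_i) \otimes b_i = 0$; hence $(\phi \otimes 1)(\sum_i g_{ij} \otimes b_i) = 0$. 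So after this single refinement step, we have reduced to the situation where the element $h' = \sum_i g_{ij} \otimes b_i \in \AAA(A_j, A') \otimes \BBB(B, B)$ is an actual element of $\AAA \otimes \BBB$ — not just a morphism whose image happens to be zero — lying in the kernel of $\phi \otimes 1$; by the glueing property it suffices to cover each $(A_j, B)$ and show $h'$ dies there.

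The remaining step is purely algebraic: I need to show that if $\xi = \sum_i g_i \otimes b_i \in \AAA(A_j,A') \otimes_k \BBB(B,B)$ maps to $0$ under $\AAA(A_j,A') \otimes_k \BBB(B,B) \to \CCC(\phi(A_j),\phi(A')) \otimes_k \BBB(B,B)$, then after restricting along a suitable cover $a_\lambda \otimes 1_B$ of $(A_j,B)$ we get $\xi(a_\lambda \otimes 1_B) = \sum_i g_i a_\lambda \otimes b_i = 0$. The natural approach is to pick a $k$-linearly independent subset of the $b_i$'s spanning their span in $\BBB(B,B)$, rewrite $\xi = \sum_p g'_p \otimes b'_p$ with $\{b'_p\}$ $k$-linearly independent; then the image of $\xi$ being zero, together with the fact that tensoring over $k$ with a set containing a $k$-basis is still faithful on the linearly independent part — more carefully: the map $\AAA(A_j,A') \to \CCC(\phi(A_j),\phi(A'))$ applied to the $g'_p$ must itself vanish, i.e. $\phi(g'_p) = 0$ for each $p$, because $\{b'_p\}$ extends to a $k$-basis and $\otimes_k$ is then split. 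Hence each $g'_p$ is a morphism in $\AAA$ killed by $\phi$, so (FF) for $\phi$ gives a cover $\langle a_{p,\mu} \rangle \in \ttt_{\AAA}(A_j)$ with $g'_p a_{p,\mu} = 0$; intersecting these finitely many covers (using that $\ttt_{\AAA}$ is a topology, closed under finite intersection of covers) yields a single cover $\langle a_\lambda \rangle$ with $g'_p a_\lambda = 0$ for all $p$, whence $\xi(a_\lambda \otimes 1_B) = \sum_p g'_p a_\lambda \otimes b'_p = 0$. Pulling this back through the two refinement levels via the glueing axiom produces the required cover of $(A,B)$.

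The main obstacle I anticipate is the linear-algebra step: the claim ``$\sum_p \phi(g'_p) \otimes b'_p = 0$ with $\{b'_p\}$ $k$-linearly independent forces $\phi(g'_p) = 0$'' needs care when $k$ is merely a commutative ring rather than a field, since then $k$-linear independence does not guarantee that $\{b'_p\}$ extends to a basis, nor that $- \otimes_k \BBB(B,B)$ is faithful. One fix is to work with a generating set of $\BBB(B,B)$ instead, or to invoke that the relevant statement only needs the $b_i$ as given (not a basis) and to track the actual relations. Alternatively — and this is likely the paper's route — one avoids the issue entirely by noting that $(\phi \otimes 1)$ restricted to $\AAA(A_j,A') \otimes \BBB(B,B)$ is $\phi \otimes \mathrm{id}_{\BBB(B,B)}$, and one simply needs: for each fixed finite set of $b_i$, choose $a_\lambda$ covering $A_j$ such that $\sum_i g_i a_\lambda \otimes b_i = 0$ already in $\AAA(A_\lambda, A') \otimes \BBB(B,B)$; by a direct kernel computation over $k$ one reduces $\sum_i \phi(g_i)\otimes b_i = 0$ to a finite system of $k$-relations $\sum_i \lambda_{ri}\phi(g_i) = 0$ (with $\lambda_{ri}\in k$ witnessing that $\phi(g_i)$'s relations over $k$ span those of the $b_i$... ) — this bookkeeping is the genuinely technical part, but it is routine once set up, and the whole argument then closes by a standard nested application of the glueing axiom.
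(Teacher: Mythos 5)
There is a genuine gap, and it sits exactly where you flagged it. Your first step, applying Lemma \ref{lemcoverind} to the morphisms $\phi(c_i)$, buys nothing: these morphisms are already in the image of $\phi$ (one may simply take $g_{ij}=c_ih_j$), and the element $\sum_i g_{ij}\otimes b_i$ you arrive at is in no better position than the original $h$, which was already an honest element of $\AAA(A,A')\otimes_k\BBB(B,B')$ killed by $\phi\otimes 1$. So the entire weight falls on your ``purely algebraic'' step, and that step does not go through over a general commutative ground ring $k$. The rewriting of $\xi=\sum_i g_i\otimes b_i$ in terms of a $k$-linearly independent family $\{b'_p\}$ need not be possible over a ring, and even when it is, $\sum_p\phi(g'_p)\otimes b'_p=0$ does not force $\phi(g'_p)=0$ unless the submodule generated by the $b'_p$ is a split free summand. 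Your fallback (``reduce to a finite system of relations $\sum_i\lambda_{ri}\phi(g_i)=0$'') is also not correct as stated: the relations witnessing the vanishing of $\sum_i\phi(g_i)\otimes b_i$ do not in general involve only the $\phi(g_i)$. By the standard criterion \cite[Lem.~6.4]{eisenbudcommag}, which is what the paper invokes, one must first enlarge $\{\phi(g_i)\}$ to a generating family $(c_\lambda)_\lambda$ of the $k$-module $\CCC(\phi(A),\phi(A'))$, and the resulting relations $\sum_\lambda\kappa_{\lambda,j}c_\lambda=0$ involve finitely many generators $c_\lambda$ which need not lie in the image of $\phi$ at all.

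Dealing with those extra generators is precisely where (F) must enter, and it is the real content of the lemma; this is why (F) and (FF) are assumed jointly, and why (FF) alone is not preserved over a ring (over a field your linear-independence argument would work and only (FF) would be needed). The paper's proof runs: apply \cite[Lem.~6.4]{eisenbudcommag} to obtain $b_\lambda=\sum_j\kappa_{\lambda,j}\bar b_j$ and $\sum_\lambda\kappa_{\lambda,j}c_\lambda=0$; use (F) via Lemma \ref{lemcoverind} to find a cover $\langle h_\sigma\rangle\in\ttt_{\AAA}(A)$ and morphisms $g_{\lambda,\sigma}$ with $c_\lambda\phi(h_\sigma)=\phi(g_{\lambda,\sigma})$ for the finitely many relevant $\lambda$ (taking $g_{i,\sigma}=a_ih_\sigma$ for the original indices); conclude $\phi(\sum_\lambda\kappa_{\lambda,j}g_{\lambda,\sigma})=0$ and apply (FF) to get further covers $\langle h^\sigma_\omega\rangle$ killing these sums; finally verify by direct computation, using $0=b_\lambda=\sum_j\kappa_{\lambda,j}\bar b_j$ for the auxiliary indices, that $h(h_\sigma h^\sigma_\omega\otimes 1_B)=0$. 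Your proposal never brings (F) to bear at this crucial point, so the argument as written does not close; by comparison, the unjustified restriction of the target to $(A',B)$ instead of $(A',B')$ is only a cosmetic slip.
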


\begin{proof}
Consider a morphism $h = \sum_{i =1}^n a_i \otimes b_i: (A,B) \lra (A', B')$ such that $0 = (\phi \otimes 1)(h) = \sum_{i = 1}^n \phi(a_i) \otimes b_i$. Let $(c_\lambda)_{\lambda \in \Lambda}$ be a collection of generators of the $k$-module $\CCC(\phi(A), \phi(A'))$ such that $\{ 1, \dots, n \} \subseteq \Lambda$ and $c_i = \phi(a_i)$ for $i \in \{ 1, \dots, n \}$. Put $b_{\lambda} = 0$ for $\lambda \in \Lambda \setminus \{1, \dots, n\}$. We thus have $0 = \sum_{\lambda \in \Lambda} c_{\lambda} \otimes b_{\lambda} \in \CCC(\phi(A), \phi(A')) \otimes \BBB(B, B')$. According to \cite[Lem. 6.4]{eisenbudcommag}, there exist $\bar{b}_j \in \BBB(B, B')$ for $j \in J$ and $\kappa_{\lambda, j} \in k$ such that $(\kappa_{\lambda, j})_{\lambda \in \Lambda, j \in J}$ contains only finitely many non-zero elements, such that the following hold:
\begin{enumerate}
\item $b_{\lambda} = \sum_{j} \kappa_{\lambda, j} \bar{b}_j$ for all $\lambda \in \Lambda$;
\item $0 = \sum_{\lambda} \kappa_{\lambda, j} c_{\lambda} = \sum_{i = 1}^n \kappa_{i,j} \phi(a_i) + \sum_{\lambda \in \Lambda \setminus \{1, \dots, n\}} \kappa_{\lambda, j} c_{\lambda}$ for all $j \in J$.
\end{enumerate}
Using (F) for $\phi$, we will first realize the right hand side of (2) as being in the image of $\phi$ up to a covering. Let $\Lambda_0 \subseteq \Lambda$ contain those $\lambda$'s for which there exists $j \in J$ with $\kappa_{\lambda, j} \neq 0$. Hence $\Lambda_0$ is finite.  By Lemma \ref{lemcoverind}, there exists a collection $h_{\sigma}: A_{\sigma} \lra A$ for $\sigma \in \Sigma$ with $\langle h_{\sigma} \rangle \in \ttt_{\AAA}(A)$ and $g_{\lambda, \sigma}: A_{\sigma} \lra A'$ such that $c_{\lambda} \phi(h_{\sigma}) = \phi(g_{\lambda, \sigma})$ for $\lambda \in \Lambda_0$ and $\sigma \in \Sigma$. Further, we may clearly suppose that
\begin{equation}\label{eqg}
 g_{i, \sigma} = a_i h_{\sigma}
 \end{equation}
 for $i = 1, \dots, n$.
Hence, for $j \in J$ and $\sigma \in \Sigma$, from (2) we obtain:
$$0 = \sum_{\lambda} \kappa_{\lambda, j} c_{\lambda} \phi(h_{\sigma})  = \phi(\sum_{\lambda} \kappa_{\lambda, j} g_{\lambda, \sigma}).$$
Using (FF) for $\phi$, for every $\sigma \in \Sigma$ we obtain a collection $h^{\sigma}_{\omega}: A^{\sigma}_{\omega} \lra A_{\sigma}$ for $\omega \in \Omega_{\sigma}$ with $\langle h^{\sigma}_{\omega} \rangle \in \ttt_{\AAA}(A_{\sigma})$ such that for every $\omega \in \Omega_{\sigma}$
\begin{equation}\label{eqzero}
0 = \sum_{\lambda} \kappa_{\lambda, j} g_{\lambda, \sigma} h^{\sigma}_{\omega}.
\end{equation}
Now consider the collection $h_{\sigma} h^{\sigma}_{\omega}: A^{\sigma}_{\omega} \lra A$ for $\sigma \in \Sigma$ and $\omega \in \Omega_{\sigma}$. By the glueing property we have $\langle h_{\sigma} h^{\sigma}_{\omega} \rangle \in \ttt_{\AAA}(A)$. Further, we have $\langle h_{\sigma} h^{\sigma}_{\omega} \otimes 1_B \rangle \in \ttt_{\AAA} \boxtimes \ttt_{\BBB}(A,B)$.
We claim that $h$ becomes zero on this covering sieve of $(A,B)$. We have $h = \sum_{i=1}^n a_i \otimes b_i = \sum_j (\sum_{i =1}^n \kappa_{i, j} a_i) \otimes \bar{b}_j$. We compute
$$
h (h_{\sigma} h^{\sigma}_{\omega} \otimes 1) = \sum_{j \in J}(\sum_{i = 1}^n \kappa_{i, j} a_i h_{\sigma} h^{\sigma}_{\omega} \otimes \bar{b}_j).
$$
Consider the expressions
$$x = \sum_{j \in J}(\sum_{\lambda \in \Lambda} \kappa_{\lambda, j} g_{\lambda, \sigma} h^{\sigma}_{\omega} \otimes \bar{b}_j)$$
and
$$y = \sum_{j \in J}(\sum_{\lambda \in \Lambda \setminus \{1, \dots, n\}} \kappa_{\lambda, j} g_{\lambda, \sigma} h^{\sigma}_{\omega} \otimes \bar{b}_j)
= \sum_{\lambda \in \Lambda \setminus \{1, \dots, n\}} (g_{\lambda, \sigma} h^{\sigma}_{\omega} \otimes \sum_{j \in J} \kappa_{\lambda, j} \bar{b}_j).$$
Using equation \eqref{eqg}, we clearly have $$x = h (h_{\sigma} h^{\sigma}_{\omega} \otimes 1) + y.$$
By equation \eqref{eqzero}, we have $x = 0$.
By definition and by condition (2) above, for $\lambda \in \Lambda \setminus \{1, \dots, n\}$, we have $0 = b_{\lambda} = \sum_{j \in J} \kappa_{\lambda, j} \bar{b}_j$ so also $y = 0$. We conclude that $h (h_{\sigma} h^{\sigma}_{\omega} \otimes 1) = 0$ as desired.
\end{proof}

\begin{lemma}
If the functor $\phi: (\AAA, \ttt_{\AAA}) \lra (\CCC, \ttt_{\CCC})$ satisfies (LC), then so does the functor $\phi \otimes 1: (\AAA \otimes \BBB, \ttt_{\AAA} \boxtimes \ttt_{\BBB}) \lra (\CCC\otimes \BBB, \ttt_{\CCC} \boxtimes \ttt_{\BBB})$.
\end{lemma}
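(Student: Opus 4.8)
The plan is to check, for $\phi \otimes 1$, the four clauses that make up condition (LC) in Definition \ref{defLC}. Three of them are already available: by Lemma \ref{lemG} the functor $\phi \otimes 1$ satisfies (G) with respect to $\ttt_{\CCC} \boxtimes \ttt_{\BBB}$, and by Lemma \ref{lemFFF} it satisfies (F) and (FF) with respect to $\ttt_{\AAA} \boxtimes \ttt_{\BBB}$. Hence the real content of the lemma is the remaining identity of cover systems on $\AAA \otimes \BBB$,
$$\ttt_{\AAA} \boxtimes \ttt_{\BBB} = (\phi \otimes 1)^{-1}(\ttt_{\CCC} \boxtimes \ttt_{\BBB}),$$
and I would prove its two inclusions separately.

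For ``$\subseteq$'' I would argue via continuity. As $\phi$ satisfies (LC) and $\ttt_{\CCC}$ is a topology, the Lemme de comparaison shows $\phi$ is continuous, hence $\phi \otimes 1$ is continuous by the proposition on tensor products of continuous functors. Unwinding the definition of $\phi^{-1}$ on cover systems from \cite{lowenlin}, continuity of $\phi \otimes 1 \colon (\AAA \otimes \BBB, \ttt_{\AAA} \boxtimes \ttt_{\BBB}) \to (\CCC \otimes \BBB, \ttt_{\CCC} \boxtimes \ttt_{\BBB})$ is exactly the inclusion $\ttt_{\AAA} \boxtimes \ttt_{\BBB} \subseteq (\phi \otimes 1)^{-1}(\ttt_{\CCC} \boxtimes \ttt_{\BBB})$.

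The inclusion ``$\supseteq$'' is the heart of the matter. Here I would use that $\ttt_{\CCC} \boxtimes \ttt_{\BBB} = (\rrr_{\CCC} \cup \rrr_{\BBB})^{\mathrm{upglue}}$ for the one-sided cover systems $\rrr_{\CCC} = \{ T \boxtimes \BBB(-,B) \mid T \in \ttt_{\CCC}, B \in \BBB \}$ and $\rrr_{\BBB} = \{ \CCC(-,C) \boxtimes S \mid S \in \ttt_{\BBB}, C \in \CCC \}$ on $\CCC \otimes \BBB$ (defined as in \S\ref{partensortop}), so that a covering sieve of $\ttt_{\CCC} \boxtimes \ttt_{\BBB}$ over an object $(\phi(A),B)$ is built from these basic covers by transfinitely iterated pullback and glueing. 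Given a sieve $P$ on $(A,B)$ lying in $(\phi \otimes 1)^{-1}(\ttt_{\CCC} \boxtimes \ttt_{\BBB})$, I would relate it, via the defining description of the preimage, to such a cover on $(\phi(A),B)$ and run a transfinite induction along the generation, showing that at each step the corresponding pullback along $\phi \otimes 1$ stays inside $\ttt_{\AAA} \boxtimes \ttt_{\BBB}$. The point that forces us to use (G), (F), (FF) — rather than only the hypothesis $\ttt_{\AAA} = \phi^{-1} \ttt_{\CCC}$ — is that $\phi$ is in general neither essentially surjective nor full, so the objects $C$ and morphisms occurring in such a cover need not come from $\AAA$; before taking pullbacks one must first refine the cover so that its members factor through $\phi$ over a covering sieve of $\ttt_{\AAA}$, which is precisely what (G), (F) and Lemma \ref{lemcoverind} produce, with the ambiguity in choosing $\phi$-preimages of morphisms absorbed through (FF), exactly as in the proof of Lemma \ref{lemFFF}. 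On the refined cover the $\CCC$-direction is then controlled by $\phi^{-1} \ttt_{\CCC} = \ttt_{\AAA}$ together with Lemma \ref{lemcalc}, and the $\BBB$-direction is untouched, so the pullback lands in $\ttt_{\AAA} \boxtimes \ttt_{\BBB}$. I expect the genuine difficulty to be organisational: checking that the refinements chosen at successor stages remain compatible with the glueing axiom as one climbs the transfinite tower defining $(\rrr_{\CCC} \cup \rrr_{\BBB})^{\mathrm{upglue}}$.

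A more conceptual route, which I would keep in reserve, avoids the sieve combinatorics: prove directly that $(\phi \otimes 1)^{\ast}$ restricts to an equivalence between $\Sh(\CCC \otimes \BBB, \ttt_{\CCC} \boxtimes \ttt_{\BBB})$ and $\Sh(\AAA \otimes \BBB, \ttt_{\AAA} \boxtimes \ttt_{\BBB})$ — transporting the equivalence $\phi_s$ furnished by $\phi$ being LC ``one variable at a time'', via the description of $\ttt \boxtimes \ttt'$-sheaves as modules that are sheaves in each variable separately — and combine this with the fact that $\phi \otimes 1$ is automatically LC for the topology $(\phi \otimes 1)^{-1}(\ttt_{\CCC} \boxtimes \ttt_{\BBB})$, which is already established since (G), (F), (FF) have been checked; comparing the two resulting equivalences and using that a topology is determined by its category of sheaves then forces the equality of cover systems. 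I would nonetheless prefer the combinatorial argument, since it stays within the machinery already developed and does not rely on the later well-definedness results.
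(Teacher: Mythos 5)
Your reduction coincides with the paper's: by Lemmas \ref{lemG}, \ref{lemF} and \ref{lemFFF} the conditions (G), (F), (FF) pass to $\phi \otimes 1$, so everything hinges on the equality $\ttt_{\AAA} \boxtimes \ttt_{\BBB} = (\phi \otimes 1)^{-1}(\ttt_{\CCC} \boxtimes \ttt_{\BBB})$. For the inclusion ``$\subseteq$'' your route via continuity can be repaired, but it is not a definitional unwinding: in this paper continuity means that $(\phi\otimes 1)^{\ast}$ preserves sheaves (Definition \ref{defct}), which is \emph{not} literally the statement that covering sieves are sent to covers, so the phrase ``is exactly the inclusion'' hides a genuine (true, but non-tautological) implication that you would have to prove or cite. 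The paper's own argument is both shorter and self-contained: since $(\phi\otimes 1)^{-1}(\ttt_{\CCC}\boxtimes\ttt_{\BBB})$ is a topology (by \cite[Thm.~2.13]{lowenlin}, using the just-established (G), (F), (FF)), it suffices to check the generators $R \boxtimes S$ with $\phi(R)\in\ttt_{\CCC}$, $S\in\ttt_{\BBB}$, and Lemma \ref{lemcalc} gives $(\phi\otimes 1)(R\boxtimes S)=\phi(R)\boxtimes S \in \ttt_{\CCC}\boxtimes\ttt_{\BBB}$.

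The real problem is the inclusion ``$\supseteq$'', which you rightly identify as the heart but only sketch: a transfinite induction along the $\mathrm{upglue}$-construction of $\ttt_{\CCC}\boxtimes\ttt_{\BBB}$, with refinements produced by (G), (F), (FF) at each stage, whose compatibility with the glueing axiom you explicitly leave open (``I expect the genuine difficulty to be organisational''). As written this is a plan rather than a proof, and the deferred difficulty is precisely where the content lies. The paper sidesteps all of it with machinery you already have: by \cite[Lem.~2.15]{lowenlin} an LC functor is cocontinuous, by Proposition \ref{propcovertimes} cocontinuity is preserved by $-\otimes 1$, and by \cite[Prop.~2.16]{lowenlin} cocontinuity of $\phi\otimes 1$ yields exactly $(\phi\otimes 1)^{-1}(\ttt_{\CCC}\boxtimes\ttt_{\BBB}) \subseteq \ttt_{\AAA}\boxtimes\ttt_{\BBB}$. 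Your proposed induction would in effect be re-proving these statements simultaneously in the tensor setting; until it is carried out, the key step of the lemma is missing. Your ``conceptual'' fallback has the same status: the recovery of the topology from its sheaf category and the comparison of the two equivalences are asserted, not argued.
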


\begin{proof}
By Lemmas \ref{lemG}, \ref{lemF}, and \ref{lemFFF}, $\phi \otimes 1$ satisfies (G), (F) and (FF). We have $\ttt_{\AAA} = \phi^{-1} \ttt_{\CCC}$, and it remains to show that $$\ttt_{\AAA} \boxtimes \ttt_{\BBB} = (\phi \otimes 1)^{-1} (\ttt_{\CCC} \boxtimes \ttt_{\BBB}).$$
To prove the inclusion $\ttt_{\AAA} \boxtimes \ttt_{\BBB} \subseteq (\phi \otimes 1)^{-1} (\ttt_{\CCC} \boxtimes \ttt_{\BBB})$, it suffices to look at a sieve $R \boxtimes S$ with $\phi(R) \in \ttt_{\CCC}$ and $S \in \ttt_{\BBB}$. It immediately follows from Lemma \ref{lemcalc} that $(\phi \otimes 1)(R \boxtimes S) \in \ttt_{\CCC} \boxtimes \ttt_{\BBB}$. 

For the other inclusion, by \cite[Prop. 2.16]{lowenlin}, it suffices to show that $\phi \otimes 1: (\AAA \otimes \BBB, \ttt_{\AAA} \boxtimes \ttt_{\BBB}) \lra (\CCC\otimes \BBB, \ttt_{\CCC} \boxtimes \ttt_{\BBB})$ is cocontinuous. By \cite[Lem. 2.15]{lowenlin}, $\phi$ is cocontinuous, whence it follows by Proposition \ref{propcovertimes} that $\phi \otimes 1$ is cocontinuous as desired.
\end{proof}

\begin{proposition}\label{propLC}
If the functors $\phi: (\AAA, \ttt_{\AAA}) \lra (\CCC, \ttt_{\CCC})$ and $\psi: (\BBB, \ttt_{\BBB}) \lra (\DDD, \ttt_{\DDD})$ both satisfy (G) (resp. (F), resp. (F) and (FF), resp. (LC)) , then so does the functor $\phi \otimes \psi: (\AAA \otimes \BBB, \ttt_{\AAA} \boxtimes \ttt_{\BBB}) \lra (\CCC \otimes \DDD, \ttt_{\CCC} \boxtimes \ttt_{\DDD})$. 
\end{proposition}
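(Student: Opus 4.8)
The plan is to reduce everything to the ``one-sided'' statements already proved. Write $\phi\otimes\psi=(1\otimes\psi)\circ(\phi\otimes1)$ as in \S\ref{partensorfun}. Lemmas \ref{lemG}, \ref{lemF}, \ref{lemFFF} and the lemma immediately preceding this proposition show that $\phi\otimes1: (\AAA\otimes\BBB,\ttt_{\AAA}\boxtimes\ttt_{\BBB})\lra(\CCC\otimes\BBB,\ttt_{\CCC}\boxtimes\ttt_{\BBB})$ inherits from $\phi$ whichever of (G), (F), (F)$+$(FF), (LC) is assumed. To obtain the analogous statement for $1\otimes\psi$, I would first record that the tensor product of sites is symmetric: the flip equivalence $\AAA\otimes\BBB\cong\BBB\otimes\AAA$ interchanges the cover systems $\rrr_{\AAA}$ and $\rrr_{\BBB}$ of Definition \ref{deftensortop}, hence carries $\ttt_{\AAA}\boxtimes\ttt_{\BBB}$ to $\ttt_{\BBB}\boxtimes\ttt_{\AAA}$, and it conjugates $1\otimes\psi$ to a functor of the form $\psi\otimes1$. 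Thus the same four lemmas, applied to $\psi$ with the two tensor slots interchanged, show that $1\otimes\psi: (\CCC\otimes\BBB,\ttt_{\CCC}\boxtimes\ttt_{\BBB})\lra(\CCC\otimes\DDD,\ttt_{\CCC}\boxtimes\ttt_{\DDD})$ inherits from $\psi$ the property in force.

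It then remains to observe that each of (G), (F), (F)$+$(FF), (LC) is stable under composition of functors between sites whose cover systems are topologies --- which holds here, all the relevant cover systems $\ttt_{\CCC}\boxtimes\ttt_{\BBB}$, etc., being topologies by Definition \ref{deftensortop}. For (G) one refines a $\ttt_{\CCC}$-covering family by the outer images, covers each member by inner images, and reassembles via the glueing axiom; for (F) one iterates the lifting property, Lemma \ref{lemcoverind} organising the simultaneous choices, the successive refinements being merged again by glueing; (FF) composes once (F) is at hand, the intermediate covers produced by the one-sided lemmas having the special ``tensor'' shape that keeps them in the image of the functor one must descend along; and (LC) follows by combining (G), (F), (FF) with the identity $(\phi\otimes1)^{-1}(1\otimes\psi)^{-1}=(\phi\otimes\psi)^{-1}$ on cover systems, which turns the two established equalities $\ttt_{\AAA}\boxtimes\ttt_{\BBB}=(\phi\otimes1)^{-1}(\ttt_{\CCC}\boxtimes\ttt_{\BBB})$ and $\ttt_{\CCC}\boxtimes\ttt_{\BBB}=(1\otimes\psi)^{-1}(\ttt_{\CCC}\boxtimes\ttt_{\DDD})$ into $\ttt_{\AAA}\boxtimes\ttt_{\BBB}=(\phi\otimes\psi)^{-1}(\ttt_{\CCC}\boxtimes\ttt_{\DDD})$. (Alternatively, (G) and (F) each admit a short direct proof for $\phi\otimes\psi$: for (G), if $R\in\ttt_{\CCC}(C)$ and $S\in\ttt_{\DDD}(D)$ realise (G) for $\phi$ and $\psi$, then $R\boxtimes S\in\rrr\subseteq\ttt_{\CCC}\boxtimes\ttt_{\DDD}$ is generated by the morphisms $(\phi\otimes\psi)(A_i,B_j)\lra(C,D)$; for (F), given $h=\sum_i c_i\otimes d_i$, apply Lemma \ref{lemcoverind} to $\phi$ on the $c_i$ and to $\psi$ on the $d_i$, and take the tensor product of the two resulting covers.)

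The step I expect to be the main obstacle is (FF), the faithfulness clause. As flagged in the introduction, fullness and the generating property pass to the tensor product separately, but faithfulness up to $\ttt_{\AAA}$ does not, and is recovered only in the presence of fullness; this is exactly why Lemma \ref{lemFFF} treats (F) and (FF) jointly and leans on the presentation lemma for tensor products, \cite[Lem. 6.4]{eisenbudcommag}. If one prefers a direct argument for $\phi\otimes\psi$ rather than composing the one-sided cases, the delicate point is to imitate the proof of Lemma \ref{lemFFF}: apply \cite[Lem. 6.4]{eisenbudcommag} to the relation $\sum_i\phi(a_i)\otimes\psi(b_i)=0$, use (F) for $\psi$ to realise the resulting generators of $\DDD(\psi(B),\psi(B'))$ in the image of $\psi$ up to a cover, use (FF) for $\psi$ to annihilate the residual relations, perform the symmetric argument with (F) and (FF) for $\phi$, and verify that $h$ dies on the tensor product of the two covers so produced --- taking care to let the covers extracted from (FF) absorb the finitely many relevant relations simultaneously, so that the glueing axiom merges everything into a single covering sieve of $(A,B)$.
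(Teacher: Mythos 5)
Your overall strategy coincides with what the paper intends: Proposition \ref{propLC} is stated without proof immediately after the one-sided lemmas, so the implicit argument is exactly your factorization $\phi\otimes\psi=(1\otimes\psi)(\phi\otimes 1)$, with the $\psi$-side handled by the symmetry of the tensor site, and in this respect your proposal matches (and is more detailed than) the paper. One caveat, though: your blanket assertion that (G), (F) and (F)$+$(FF) are ``stable under composition of functors between sites whose cover systems are topologies'' is not true as a general statement. These conditions tie a functor to specific topologies on its source and target, and composing them requires compatibilities that the bare conditions do not supply: for (G) one needs the outer functor to carry the intermediate covers to covering families of the target topology, and for (F) and (FF) one needs to descend covers of $(\phi(A),B)$ in $\ttt_{\CCC}\boxtimes\ttt_{\BBB}$ to covers of $(A,B)$ in $\ttt_{\AAA}\boxtimes\ttt_{\BBB}$, a cocontinuity-type property not implied by (F)$+$(FF) alone. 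In the situation at hand the composition can nevertheless be made to work, but only by using the sharp form of the one-sided lemmas --- the covers they produce have the special shapes $R\boxtimes\BBB(-,B)$ and $\langle a_{\alpha}\otimes 1_B\rangle$ (so their images contain covers via $\psi(1_B)=1_{\psi(B)}$, and they pull back along $\phi\otimes 1$ to covers of the source) --- which is precisely the point you acknowledge for (FF) but gloss over for (G) and (F). This is repaired by your parenthetical direct arguments, which are correct and complete: for (G) the sieve $R\boxtimes S$ is generated by the $f_i\otimes g_j$ and lies in $\rrr\subseteq\ttt_{\CCC}\boxtimes\ttt_{\DDD}$, and for (F) Lemma \ref{lemcoverind} applied in each slot gives a cover $\langle h_j\otimes k_l\rangle=\langle h_j\rangle\boxtimes\langle k_l\rangle$ with the required liftings $\sum_i g_{ij}\otimes m_{il}$. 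Your sketch of the two-sided imitation of Lemma \ref{lemFFF} is also sound, and correctly identifies the extra step absent from the one-sided case: after Eisenbud's lemma the analogues of the equations $b_{\lambda}=\sum_j\kappa_{\lambda,j}\bar{b}_j$ only hold after applying $\psi$, so besides (F) for $\psi$ (to realise the new generators of $\DDD(\psi(B),\psi(B'))$ in the image up to a $\ttt_{\BBB}$-cover) one must invoke (FF) for $\psi$ to kill the finitely many residual relations on a refinement, after which $h$ indeed dies on the product cover; finally, your derivation of the (LC) clause from $(\phi\otimes\psi)^{-1}=(\phi\otimes 1)^{-1}(1\otimes\psi)^{-1}$ on cover systems is correct. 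So the proposal is essentially right; I would only ask you to replace the general composition claim by either the direct two-variable arguments or an explicit use of the special form of the intermediate covers.
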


\section{Tensor product of Grothendieck categories}\label{parpartensorgroth}

Based upon the previous sections, in \S \ref{partensorgroth} we are finally in a position to define the tensor product of Grothendieck categories $\ccc \cong \Sh(\AAA, \ttt_{\AAA})$ and $\ddd \cong \Sh(\BBB, \ttt_{\BBB})$ to be given by $$\ccc \boxtimes \ddd = \Sh(\AAA \otimes \BBB, \ttt_{\AAA} \boxtimes \ttt_{\BBB}).$$ Functoriality of the tensor product of sites ensures that $\ccc \boxtimes \ddd$ is welldefined up to equivalence of categories.  For an alternative approach ensuring welldefinedness by making use of the already established tensor product of locally presentable categories, we refer to \S \ref{parfuture}. 

The remainder of this section is devoted to an application of our tensor product to $\Z$-algebras and schemes. In \S \ref{parzalg} we provide a nice realisation of the tensor product of $\Z$-algebras, while in \S \ref{parqcohproj}, we show that for projective schemes $X$ and $Y$ we have
\begin{equation}\label{eqschin}
\Qch(X) \boxtimes \Qch(Y) = \Qch(X \times Y).
\end{equation}
This result generalizes to non-commutative projective schemes, and our proof is actually based upon the results in \S \ref{parzalg}. Here, we use $\Z$-algebras as models for non-commutative schemes following \cite{bondalpolishchuk}, \cite{vandenbergh2}, \cite{staffordvandenbergh}, \cite{dedekenlowen}, and to a $\Z$-algebra $\AAA$ we can associate a certain category $\Qch(\AAA)$ which replaces the quasicoherent sheaves, and which is obtained as a linear sheaf category with respect to a certain topology. For two $\Z$-algebras $\AAA$ and $\BBB$ generated in degree $1$, there is a naturally associated diagonal $\Z$-algebra $(\AAA \otimes \BBB)_{\Delta}$, for which we show that
\begin{equation}\label{eqzalgin}
\Qch(\AAA) \boxtimes \Qch(\BBB) \cong \Qch((\AAA \otimes \BBB)_{\Delta}).
\end{equation}
The relation between \eqref{eqschin} and \eqref{eqzalgin} is provided by graded algebras (generated in degree $1$), which on the one hand are used to represent schemes through the Proj construction, and which on the other hand give rise to associated $\Z$-algebras.

\subsection{Tensor product of Grothendieck categories}\label{partensorgroth}
Let $\ccc$ be a $k$-linear Grothen\-dieck category and let $(\AAA, \ttt_{\AAA})$ be a $k$-linear site. We say that a $k$-linear functor $u: (\AAA, \ttt_{\AAA}) \lra \ccc$ satisfies (LC), or is an \emph{LC morphism} provided that $u: (\AAA, \ttt_{\AAA}) \lra (\ccc, \ttt_{\ccc})$ satisfies (LC) where $\ttt_{\ccc}$ is the topology of jointly epimorphic sieves. Precisely, $R \in \ttt_{\ccc}(C)$ if and only if $\oplus_{(f: C_f \lra C) \in R} C_f \lra C$ is an epimorphism. The general Gabri\"el-Popescu theorem states that  for $\ttt_{\AAA} = u^{-1} \ttt_{\ccc}$, we have that $u$ is an LC morphism if and only if $\ttt_{\AAA}$ is a topology and $u$ gives rise to an equivalence $\ccc \lra \Sh(\AAA, \ttt_{\AAA})$ (see \cite{lowenGP}).

Consider $k$-linear Grothendieck categories $\ccc$ and $\ddd$.

\begin{proposition}\label{propwelldef}
Consider LC morphisms $u: (\AAA, \ttt_{\AAA}) \lra \ccc$, $u': (\AAA', \ttt_{\AAA'}) \lra \ccc$, $v: (\BBB, \ttt_{\BBB}) \lra \ddd$, $v': (\BBB', \ttt_{\BBB'}) \lra \ddd$. There exists an equivalence of categories
$$\Sh(\AAA \otimes \BBB, \ttt_{\AAA} \boxtimes \ttt_{\BBB}) \cong \Sh(\AAA' \otimes \BBB', \ttt_{\AAA'} \boxtimes \ttt_{\BBB'}).$$ 
\end{proposition}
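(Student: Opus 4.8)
The plan is to reduce the statement to the already-established fact that the tensor product of sites preserves LC functors (Proposition \ref{propLC}), together with the fact that any two site-representations of a fixed Grothendieck category are connected by a roof of LC morphisms. Concretely, fix $\ccc$ and the two LC morphisms $u : (\AAA,\ttt_\AAA) \lra \ccc$ and $u' : (\AAA',\ttt_{\AAA'}) \lra \ccc$. The first step is to produce a common refinement: I would form a site $(\AAA'',\ttt_{\AAA''})$ together with LC functors $\phi : (\AAA'',\ttt_{\AAA''}) \lra (\AAA,\ttt_\AAA)$ and $\phi' : (\AAA'',\ttt_{\AAA''}) \lra (\AAA',\ttt_{\AAA'})$ of sites such that $u\phi$ and $u'\phi'$ agree (up to natural isomorphism) as LC morphisms into $\ccc$. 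The natural candidate for $\AAA''$ is the full subcategory of $\ccc$ spanned by the objects in the images of $u$ and $u'$ (or a small skeleton thereof), equipped with the restricted topology $\ttt_{\ccc}$ of jointly epimorphic sieves; the inclusion into $\ccc$ is then an LC morphism by the general Gabri\"el--Popescu theorem, and the factorizations through $(\AAA,\ttt_\AAA)$ and $(\AAA',\ttt_{\AAA'})$ are LC functors of sites by the same circle of comparison results (\cite{lowenGP}, \cite{lowenlin}). Doing the same on the $\ddd$-side gives LC functors $\psi : (\BBB'',\ttt_{\BBB''}) \lra (\BBB,\ttt_\BBB)$ and $\psi' : (\BBB'',\ttt_{\BBB''}) \lra (\BBB',\ttt_{\BBB'})$.

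Next I would apply the functoriality of the tensor product of sites. By Proposition \ref{propLC}, since $\phi$ and $\psi$ satisfy (LC), so does
$$\phi \otimes \psi : (\AAA'' \otimes \BBB'', \ttt_{\AAA''} \boxtimes \ttt_{\BBB''}) \lra (\AAA \otimes \BBB, \ttt_\AAA \boxtimes \ttt_\BBB),$$
and likewise $\phi' \otimes \psi'$ is an LC functor into $(\AAA' \otimes \BBB', \ttt_{\AAA'} \boxtimes \ttt_{\BBB'})$. By the Lemme de comparaison (the theorem quoted just before Lemma \ref{lemcoverind}), an LC functor between sites whose target topology is a topology induces an equivalence on sheaf categories; here $\ttt_\AAA \boxtimes \ttt_\BBB$, $\ttt_{\AAA'} \boxtimes \ttt_{\BBB'}$ and $\ttt_{\AAA''} \boxtimes \ttt_{\BBB''}$ are all topologies by Definition \ref{deftensortop}. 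Hence both $\Sh(\AAA \otimes \BBB, \ttt_\AAA \boxtimes \ttt_\BBB)$ and $\Sh(\AAA' \otimes \BBB', \ttt_{\AAA'} \boxtimes \ttt_{\BBB'})$ are equivalent to $\Sh(\AAA'' \otimes \BBB'', \ttt_{\AAA''} \boxtimes \ttt_{\BBB''})$, and composing the two equivalences yields the desired equivalence.

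The main obstacle I anticipate is the very first step: constructing the common refining site $(\AAA'',\ttt_{\AAA''})$ together with LC functors \emph{of sites} down to $(\AAA,\ttt_\AAA)$ and $(\AAA',\ttt_{\AAA'})$, and checking that the relevant triangles of morphisms commute (so that one genuinely lands in the same Grothendieck category, not merely in equivalent ones). One must verify that $\phi$ satisfies (G), (F), (FF) and the condition $\ttt_{\AAA''} = \phi^{-1}\ttt_\AAA$ with respect to the restricted topology of jointly epimorphic sieves; this is where the detailed properties of LC morphisms into Grothendieck categories from \cite{lowenGP} are needed, and where some care is required regarding size issues (passing to a small skeleton, and ensuring the chosen generators' closure is small). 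A cleaner variant, which I would use if the direct roof construction gets technical, is to invoke that any LC morphism $u : (\AAA,\ttt_\AAA)\lra\ccc$ factors through an LC functor of sites $(\AAA,\ttt_\AAA)\lra(\ovl\AAA,\ttt_{\ovl\AAA})$ into a canonical ``saturated'' site attached to $\ccc$ (e.g.\ a small full subcategory of $\ccc$ containing a generator, with the jointly-epimorphic topology), apply Proposition \ref{propLC} to reduce both sides to the tensor of these canonical sites, and conclude. Once the roof is in place, everything else is a formal consequence of Proposition \ref{propLC} and the Lemme de comparaison.
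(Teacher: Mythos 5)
Your overall architecture --- relate the two representations through a roof of LC functors, tensor the roof using Proposition \ref{propLC}, and conclude with the Lemme de comparaison --- is exactly the strategy of the paper, and even your candidate auxiliary category (the full subcategory $\CCC\subseteq\ccc$ on the objects $u(A)$ and $u'(A')$, with the topology restricted from the canonical topology $\ttt_{\ccc}$) is the one the paper uses. The gap is in the direction of your roof. You posit LC functors $\phi:(\AAA'',\ttt_{\AAA''})\lra(\AAA,\ttt_{\AAA})$ and $\phi':(\AAA'',\ttt_{\AAA''})\lra(\AAA',\ttt_{\AAA'})$ obtained by ``factoring'' the inclusion $\AAA''\lra\ccc$ through $u$ and $u'$. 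Such factorizations do not exist in general: an object $u'(A')\in\AAA''$ need not be isomorphic to any $u(A)$, so there is no functor $\AAA''\lra\AAA$ whose composite with $u$ is isomorphic to the inclusion (take $\ccc=\Mod(k)$, $u$ the inclusion of the full subcategory on $k$ and $u'$ that on $k\oplus k$); moreover, even on objects in the image of $u$, a morphism of $\ccc$ need not be of the form $u(a)$, since an LC morphism is only full ``up to covers'' (condition (F)), so there is no functorial choice of lifts. The same objection applies to your ``cleaner variant'': for a canonical small generating subcategory $\ovl{\AAA}\subseteq\ccc$ chosen independently of $u$ there is in general no functor $(\AAA,\ttt_{\AAA})\lra(\ovl{\AAA},\ttt_{\ovl{\AAA}})$ at all, unless $\ovl{\AAA}$ is taken to contain the image of $u$ --- which brings you back to the construction above.

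The repair is simply to reverse the arrows, which is what the paper does: with $\CCC\subseteq\ccc$ and $\DDD\subseteq\ddd$ the full subcategories on the unions of the images, $\ttt_{\CCC}=i^{-1}\ttt_{\ccc}$ and $\ttt_{\DDD}=j^{-1}\ttt_{\ddd}$, the corestrictions $\bar{u}:(\AAA,\ttt_{\AAA})\lra(\CCC,\ttt_{\CCC})$, $\bar{u}':(\AAA',\ttt_{\AAA'})\lra(\CCC,\ttt_{\CCC})$, $\bar{v}$, $\bar{v}'$ do exist and are LC: (G) holds because covers of objects of $\CCC$ by objects $u(A_i)$ in $\ccc$ restrict to covers for $i^{-1}\ttt_{\ccc}$, (F) and (FF) hold because $\CCC$ is full in $\ccc$ so they are literally the corresponding conditions for $u$, and $\bar{u}^{-1}\ttt_{\CCC}=u^{-1}\ttt_{\ccc}=\ttt_{\AAA}$. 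Proposition \ref{propLC} then makes $\bar{u}\otimes\bar{v}$ and $\bar{u}'\otimes\bar{v}'$ LC into $(\CCC\otimes\DDD,\ttt_{\CCC}\boxtimes\ttt_{\DDD})$, and the Lemme de comparaison yields
$$\Sh(\AAA\otimes\BBB,\ttt_{\AAA}\boxtimes\ttt_{\BBB})\cong\Sh(\CCC\otimes\DDD,\ttt_{\CCC}\boxtimes\ttt_{\DDD})\cong\Sh(\AAA'\otimes\BBB',\ttt_{\AAA'}\boxtimes\ttt_{\BBB'}).$$
Note that the Lemme de comparaison produces an equivalence of sheaf categories whichever way the LC functor points, so once the roof is turned into a cospan the rest of your argument goes through verbatim, and the commutativity of triangles with $u$, $u'$ that you worried about is not needed for the statement being proved.
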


\begin{proof}
Let $\CCC \subseteq \ccc$ be the full subcategory with $\Ob(\CCC) = \{ u(A) \,\, |\,\, A \in \AAA \} \cup \{ u'(A) \,\, |\,\, A' \in \AAA' \}$ and let $\DDD \subseteq \ddd$ be the full subcategory with $\Ob(\DDD) = \{ v(B) \,\, |\,\, B \in \BBB \} \cup \{ v'(B) \,\, |\,\, B' \in \BBB' \}$. Put $\ttt_{\CCC} = i^{-1} \ttt_{\ccc}$ for the inclusion $i: \CCC \lra \ccc$ and the canonical topology $\ttt_{\ccc}$ on $\ccc$ and put $\ttt_{\DDD} = j^{-1} \ttt_{\ddd}$ for the inclusion $j: \DDD \lra \ddd$ and the canonical topology $\ttt_{\ddd}$ on $\ddd$. It follows that the induced functors $\bar{u}: (\AAA, \ttt_{\AAA}) \lra (\CCC, \ttt_{\CCC})$, $\bar{u}': (\AAA', \ttt{\AAA'}) \lra (\CCC, \ttt_{\CCC})$, $\bar{v}: (\BBB, \ttt_{\BBB}) \lra (\DDD, \ttt_{\DDD})$, $\bar{v}': (\BBB', \ttt_{\BBB'}) \lra (\DDD, \ttt_{\DDD})$ are all LC morphisms. By Proposition \ref{propLC}, it follows that $\bar{u} \otimes \bar{v}: (\AAA \otimes \BBB, \ttt_{\AAA} \boxtimes \ttt_{\BBB}) \lra (\CCC \otimes \DDD, \ttt_{\CCC} \boxtimes \ttt_{\DDD})$ and $\bar{u}' \otimes \bar{v}': (\AAA' \otimes \BBB', \ttt_{\AAA'} \boxtimes \ttt_{\BBB'}) \lra (\CCC \otimes \DDD, \ttt_{\CCC} \boxtimes \ttt_{\DDD})$ are LC morphisms, and in particular we have equivalences of categories $\Sh(\AAA \otimes \BBB, \ttt_{\AAA} \boxtimes \ttt_{\BBB}) \cong \Sh(\CCC \otimes \DDD, \ttt_{\CCC} \boxtimes \ttt_{\DDD}) \cong \Sh(\AAA' \otimes \BBB', \ttt_{\AAA'} \boxtimes \ttt_{\BBB'})$.
\end{proof}
Thanks to Proposition \ref{propwelldef}, we can now make the following definition:

\begin{definition}\label{defthedef}
Consider Grothendieck categories $\ccc$ and $\ddd$. The \emph{tensor product} $\ccc \boxtimes \ddd$ is the following Grothendieck category, defined up to equivalence of categories: for arbitrary LC morphisms $u: (\AAA, \ttt_{\AAA}) \lra \ccc$ and $v: (\BBB, \ttt_{\BBB}) \lra \ddd$, we put
$$\ccc \boxtimes \ddd = \Sh(\AAA \otimes \BBB, \ttt_{\AAA} \boxtimes \ttt_{\BBB}).$$
\end{definition}

\subsection{Tensor product of $\Z$-algebras}\label{parzalg}
Recall that a $\Z$-algebra is a linear category $\AAA$ with $\Ob(\AAA) = \Z$. We further suppose that $\AAA$ is \emph{positively graded}, that is $\AAA(n,m) = 0$ for $n<m$. Following \cite{dedekenlowen}, we consider the sieves $\AAA(-,m)_{\geq n} \subseteq \AAA(-,m)$ for $n \geq m \in \Z$ with $$\AAA(l,m)_{\geq n} = \begin{cases} \AAA(l,m) & \text{if}\,\, l \geq n \\ 0 & \text{otherwise} \end{cases}$$
and we consider the \emph{tails localizing system} 
$$\LLL_{\mathrm{tails}} = \{ \AAA(-,m)_{\geq n} \,\, |\,\, n \geq m \}^{\mathrm{up}}$$
and the \emph{tails topology}
$$\ttt_{\mathrm{tails}} = \LLL_{\mathrm{tails}}^{\mathrm{upglue}}.$$

\begin{remark}
In many cases of interest, we have $\LLL_{\mathrm{tails}} = \ttt_{\mathrm{tails}}$. This is the case for a noetherian $\Z$-algebra or for a connected, finitely generated $\Z$-algebra in the sense of \cite{dedekenlowen}.
\end{remark}

For $\Z$-algebras $\AAA$ and $\BBB$, we define the \emph{diagonal} $\Z$-algebra $\CCC = (\AAA \otimes \BBB)_{\Delta}$ with
$$\CCC(n,m) = (\AAA \otimes \BBB)((n,n), (m,m)) = \AAA(n,m) \otimes \BBB(n,m).$$
There is a corresponding fully faithful functor
$$\Delta: \CCC \lra \AAA \otimes \BBB: n \longmapsto (n,n).$$
Let $\LLL_{\AAA}$, $\LLL_{\BBB}$, $\LLL_{\CCC}$ denote the tails localizing systems on $\AAA$, $\BBB$ and $\CCC$ respectively, and let $\ttt_{\AAA}$, $\ttt_{\BBB}$, $\ttt_{\CCC}$ denote the corresponding tails topologies. Further, consider the following cover system on $\AAA \otimes \BBB$:
$$\LLL_{\AAA \otimes \BBB} = \{ R \boxtimes S \,\, |\,\, R \in \LLL_{\AAA}, S \in \LLL_{\BBB} \}^{\mathrm{up}}$$

\begin{lemma}\label{lemloctentail}
The cover system $\LLL_{\AAA \otimes \BBB}$ is localizing and upclosed and $\ttt_{\AAA} \boxtimes \ttt_{\BBB} = \LLL_{\AAA \otimes \BBB}^{\mathrm{upglue}}$.
\end{lemma}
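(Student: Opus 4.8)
The plan is to compare $\LLL_{\AAA \otimes \BBB}$ with the localizing cover system $\rrr^{\mathrm{up}}$ generated by the tails sieves after first upward-closing, and then with the one-sided cover systems $\ttt_1, \ttt_2$, exactly as in the proof that $\ttt = \rrr^{\mathrm{upglue}}$. First I would record what $\LLL_{\AAA \otimes \BBB}$ consists of: by definition it is the upward closure of the family $\{\AAA(-,m)_{\geq n} \boxtimes \BBB(-,q)_{\geq p}\}$ (together with products of such with arbitrary objects, coming from the $\mathrm{up}$-closures of $\LLL_{\AAA}$ and $\LLL_{\BBB}$). The key point is that $\AAA(-,m)_{\geq n} \boxtimes \BBB(-,q)_{\geq p}$ is again, up to the passage to a representable object in the other variable, a ``staircase'' sieve on $(m,q) \in \AAA \otimes \BBB$, and that such sieves are exactly the ones appearing in $\rrr$ of \S\ref{partensortop} applied to the tails topologies.

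The main steps, in order, would be: \textbf{(1)} Show $\LLL_{\AAA \otimes \BBB}$ is a localizing cover system. Being upclosed is immediate from the $\mathrm{up}$ in the definition; the pullback axiom follows from Lemma \ref{lem1} (with $\ttt_{\AAA}$, $\ttt_{\BBB}$ replaced by $\LLL_{\AAA}$, $\LLL_{\BBB}$, which is legitimate since Lemma \ref{lem1} only uses that the input sieves are covering in a localizing system, and intersections of tails sieves are again tails sieves up to refinement), together with the fact that $\LLL_{\AAA}$ and $\LLL_{\BBB}$ are themselves localizing. The identity axiom holds because $\AAA(-,m)_{\geq m} = \AAA(-,m)$. \textbf{(2)} Identify $\LLL_{\AAA \otimes \BBB}^{\mathrm{up}} = \rrr^{\mathrm{up}}$, where $\rrr = \{R \boxtimes S \mid R \in \ttt_{\AAA}, S \in \ttt_{\BBB}\}$: the inclusion $\subseteq$ is clear since tails sieves lie in $\ttt_{\AAA}$, $\ttt_{\BBB}$; for $\supseteq$ one uses that any $R \in \ttt_{\AAA}$ contains some $\AAA(-,m)_{\geq n}$ after refinement — more precisely, the glueing closure is handled in step (3), and here one only needs that $\rrr^{\mathrm{up}}$ and $\LLL_{\AAA \otimes \BBB}$ have the same glueing closure. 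Concretely I would argue $\rrr^{\mathrm{up}} \subseteq \LLL_{\AAA \otimes \BBB}^{\mathrm{upglue}}$ by reducing a general $R \boxtimes S$ to the basic staircase sieves via a pullback-and-glue argument analogous to the proof of part (3) of the Proposition in \S\ref{partensortop}. \textbf{(3)} Conclude: by the Proposition in \S\ref{partensortop}, $\ttt_{\AAA} \boxtimes \ttt_{\BBB} = \rrr^{\mathrm{upglue}}$; combining with step (2), $\ttt_{\AAA} \boxtimes \ttt_{\BBB} = \LLL_{\AAA \otimes \BBB}^{\mathrm{upglue}}$, using that $\mathrm{upglue}$ applied to cover systems with the same $\mathrm{upglue}$-closure agree, and that the $\mathrm{upglue}$ of a localizing cover system is the smallest topology containing it.

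\textbf{The main obstacle} I anticipate is step (2), specifically showing $\rrr^{\mathrm{up}} \subseteq \LLL_{\AAA \otimes \BBB}^{\mathrm{upglue}}$ (equivalently, that passing to the generating sieves $\AAA(-,m)_{\geq n} \boxtimes \BBB(-,q)_{\geq p}$ loses nothing after glueing). For an arbitrary $R \in \ttt_{\AAA}(m)$ one cannot in general find a single $\AAA(-,m)_{\geq n} \subseteq R$ — this is precisely why $\LLL_{\mathrm{tails}} \neq \ttt_{\mathrm{tails}}$ in general — so one must genuinely use the glueing axiom: cover $(m,q)$ by a staircase sieve and then, on each generator, pull back $R \boxtimes S$ and fill in using that $R$, $S$ are covering in the tails \emph{topology}, invoking Lemma \ref{lem2} to keep everything inside $\LLL_{\AAA \otimes \BBB}^{\mathrm{upglue}}$. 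Care is needed that all the intermediate cover systems one manipulates are localizing so that Lemmas \ref{lem1} and \ref{lem2} apply; this is where having established step (1) first pays off. The remaining parts are bookkeeping with the operators $(-)^{\mathrm{up}}$, $(-)^{\mathrm{glue}}$, $(-)^{\mathrm{upglue}}$ and pose no real difficulty.
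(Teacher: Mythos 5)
The paper states Lemma \ref{lemloctentail} without proof, so there is no argument of the authors to compare against; judged on its own terms, your plan is correct and is essentially the intended one: everything reduces to the Proposition of \S\ref{partensortop} together with the fact that the upglue of a localizing cover system is the smallest topology containing it, and the only substantive point is the inclusion $\rrr^{\mathrm{up}} \subseteq \LLL_{\AAA \otimes \BBB}^{\mathrm{upglue}}$, exactly as you identify. Two small corrections to the write-up. First, the intermediate claim ``$\LLL_{\AAA \otimes \BBB}^{\mathrm{up}} = \rrr^{\mathrm{up}}$'' is false as literally stated: for $\BBB$ connected, $R \boxtimes \BBB(-,q)$ contains a sieve $\AAA(-,m)_{\geq n} \boxtimes \BBB(-,q)_{\geq p}$ only if $R \supseteq \AAA(-,m)_{\geq n}$, which fails for general $R \in \ttt_{\AAA}$ whenever $\LLL_{\AAA} \neq \ttt_{\AAA}$; you effectively retract this, and indeed only the two correct inclusions $\LLL_{\AAA \otimes \BBB} \subseteq \rrr^{\mathrm{up}}$ and $\rrr^{\mathrm{up}} \subseteq \LLL_{\AAA \otimes \BBB}^{\mathrm{upglue}}$ are needed. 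Second, in the glueing step the workhorse is Lemma \ref{lem1} (plus the explicit description of the upglue of a localizing system), not Lemma \ref{lem2}: since $\ttt_{\AAA} = \LLL_{\AAA}^{\mathrm{upglue}}$, for $R \in \ttt_{\AAA}(m)$ there is a tails sieve $T = \AAA(-,m)_{\geq n}$ with $a^{-1}R \in \LLL_{\AAA}$ for all $a \in T$, and similarly $T'$ for $S \in \ttt_{\BBB}(q)$; for any $h = \sum_i a_i \otimes b_i$ in $T \boxtimes T'$, Lemma \ref{lem1} gives $h^{-1}(R \boxtimes S) \supseteq (\cap_i a_i^{-1}R) \boxtimes (\cap_i b_i^{-1}S)$, which contains a product of tails sieves because a finite intersection of sieves each containing a tails sieve again contains one; hence $h^{-1}(R \boxtimes S) \in \LLL_{\AAA \otimes \BBB}$ for every $h$ in the cover $T \boxtimes T' \in \LLL_{\AAA \otimes \BBB}$, and the glueing property then puts $R \boxtimes S$ in $\LLL_{\AAA \otimes \BBB}^{\mathrm{upglue}}$. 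With these details, your step (1) (identity plus pullback via Lemma \ref{lem1}, whose statement is indeed a purely sieve-theoretic inclusion valid for arbitrary $R$, $S$) and the order-theoretic bookkeeping of step (3) go through as you describe.
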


\begin{proposition}\label{propdelta}
The functor $\Delta: (\CCC, \ttt_{\CCC}) \lra (\AAA \otimes \BBB, \ttt_{\AAA} \boxtimes \ttt_{\BBB})$ is cocontinuous and we have $$\Delta^{-1}(\ttt_{\AAA} \boxtimes \ttt_{\BBB}) \subseteq \ttt_{\CCC}.$$
\end{proposition}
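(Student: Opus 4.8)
The plan is to establish cocontinuity of $\Delta$ first, and then to deduce the inclusion $\Delta^{-1}(\ttt_{\AAA}\boxtimes\ttt_{\BBB}) \subseteq \ttt_{\CCC}$ as a formal consequence. For cocontinuity I would begin by reducing to the localizing cover systems, exactly as in the proof of Proposition~\ref{propcovertimes}: by Lemma~\ref{lemloctentail} we have $\ttt_{\AAA}\boxtimes\ttt_{\BBB} = \LLL_{\AAA\otimes\BBB}^{\mathrm{upglue}}$ with $\LLL_{\AAA\otimes\BBB}$ localizing and upclosed, and by definition $\ttt_{\CCC} = \LLL_{\CCC}^{\mathrm{upglue}}$ with $\LLL_{\CCC}$ the tails localizing system, so by \cite[Lem.~2.12]{lowenlin} it suffices to show that $\Delta\colon (\CCC,\LLL_{\CCC}) \lra (\AAA\otimes\BBB,\LLL_{\AAA\otimes\BBB})$ is cocontinuous. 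Next I would observe that, since $\LLL_{\AAA\otimes\BBB}$ is upclosed, any covering sieve on $\Delta(n) = (n,n)$ contains some $R\boxtimes S$ with $R \in \LLL_{\AAA}(n)$ and $S \in \LLL_{\BBB}(n)$, hence contains $\AAA(-,n)_{\geq p}\boxtimes\BBB(-,n)_{\geq q}$ for suitable $p,q \geq n$ (using $R \supseteq \AAA(-,n)_{\geq p}$, $S \supseteq \BBB(-,n)_{\geq q}$ and the monotonicity of $-\boxtimes-$ in both slots). So it is enough to produce, for each such basic sieve, a tails covering sieve $S'$ on $n$ in $\CCC$ with $\Delta(S') \subseteq \AAA(-,n)_{\geq p}\boxtimes\BBB(-,n)_{\geq q}$.

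The computational core is the identity
\[
\Delta^{-1}\!\bigl(\AAA(-,n)_{\geq p}\boxtimes\BBB(-,n)_{\geq q}\bigr) = \CCC(-,n)_{\geq \max(p,q)}.
\]
Because $\Delta$ is fully faithful and identifies $\CCC(m,n)$ with $(\AAA\otimes\BBB)((m,m),(n,n)) = \AAA(m,n)\otimes\BBB(m,n)$, pulling a sieve $T$ on $(n,n)$ back along $\Delta$ just restricts it to the diagonal: $(\Delta^{-1}T)(m) = T((m,m))$. Evaluating the basic sieve at $(m,m)$ yields the image of $\AAA(m,n)_{\geq p}\otimes\BBB(m,n)_{\geq q}$ inside $\AAA(m,n)\otimes\BBB(m,n)$, which is the whole module when $m \geq p$ and $m \geq q$, and is $0$ otherwise — that is, precisely $\CCC(m,n)_{\geq\max(p,q)}$. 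Taking $S' := \CCC(-,n)_{\geq\max(p,q)}$, which lies in $\LLL_{\CCC}(n)$ since $\max(p,q) \geq n$, the general inclusion $\Delta(\Delta^{-1}T) \subseteq T$ gives $\Delta(S') \subseteq \AAA(-,n)_{\geq p}\boxtimes\BBB(-,n)_{\geq q}$, which is the required cocontinuity.

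Finally, since both $\ttt_{\CCC}$ and $\ttt_{\AAA}\boxtimes\ttt_{\BBB}$ are topologies, \cite[Prop.~2.16]{lowenlin} upgrades the cocontinuity of $\Delta$ to the inclusion $\Delta^{-1}(\ttt_{\AAA}\boxtimes\ttt_{\BBB}) \subseteq \ttt_{\CCC}$, which would complete the argument. The only point where I expect to need care is the displayed identity: one must check that restricting $R\boxtimes S$ to the diagonal — where $\Delta$, being not essentially surjective, only ``sees'' the objects $(m,m)$ — does not lose covering information, so that the exponent genuinely comes out as $\max(p,q)$; the explicit form of the tails generators and the monotonicity of the tensor product of sieves make this a routine verification, but it is the step most prone to slips.
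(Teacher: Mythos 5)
Your proposal is correct and follows essentially the same route as the paper: reduce to the localizing systems $\LLL_{\CCC}$ and $\LLL_{\AAA\otimes\BBB}$ via \cite[Lem.~2.12]{lowenlin} (using Lemma \ref{lemloctentail}), exhibit for the basic sieve $\AAA(-,n)_{\geq p}\boxtimes\BBB(-,n)_{\geq q}$ the tails cover $\CCC(-,n)_{\geq\max(p,q)}$ whose image lies inside it, and deduce the inclusion of topologies from cocontinuity by \cite[Prop.~2.16]{lowenlin}. The diagonal restriction identity you flag as the delicate point is exactly the paper's one-line computation $S(l)=\AAA(l,m)_{\geq n}\otimes\BBB(l,m)_{\geq n}\subseteq R(l,l)$ with $n=\max(n_1,n_2)$, so there is no gap.
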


\begin{proof}
The second claim follows from the first one by \cite[Prop. 2.16]{lowenlin}
According to \cite[Lem. 2.12]{lowenlin}, it suffices to prove the statement for $\LLL_{\CCC}$ on $\CCC$ and $\LLL_{\AAA \otimes \BBB}$ on $\AAA \otimes \BBB$.  Hence, consider $m \in \CCC$ and $\Delta(m) = (m,m) \in \AAA \otimes \BBB$, and $R = \AAA(-,m)_{\geq n_1} \boxtimes \BBB(-,m)_{\geq n_2} \in \LLL_{\AAA \otimes \BBB}(m,m)$. For $n = \mathrm{max}(n_1, n_2)$, consider $S = \CCC(-,m)_{\geq n} \in \LLL_{\CCC}$. We have $S(l) = \AAA(l,m)_{\geq n} \otimes \BBB(l,m)_{\geq n} \subseteq \AAA(l,m)_{\geq n_1} \otimes \BBB(l,m)_{\geq n_2} = R(l,l)$ so $\Delta S \subseteq R$ as desired.
\end{proof}

In order to improve upon Proposition \ref{propdelta}, we look at generation of $\Z$-algebras in the sense of \cite{dedekenlowen}.

\begin{definition}\label{defgen}
\begin{enumerate}
\item A linear category $\AAA$ is \emph{generated} by subsets $X(A, A') \subseteq \AAA(A,A')$ if every element of $\AAA$ can be written as a linear sum of products of elements in $X$.
\item A $\Z$ algebra $\AAA$ is \emph{generated in certain degrees $D \subseteq \N$} if it is generated by $X$ with $X(n,m) = \varnothing$ unless $n - m \in D$.
\item A $\Z$-algebra $\AAA$ is \emph{finitely generated} if it is generated by $X$ such that for all $m$ the set $\cup_{d \in \N} X(m + d, m)$ is finite.
\item A $\Z$-algebra $\AAA$ is \emph{connected} if $\AAA(n,n) \cong k$ for all $n$.
\end{enumerate}
\end{definition}

We make the following observation:

\begin{proposition}
Consider $\Z$-algebras $\AAA$ and $\BBB$ and put $\CCC = (\AAA \otimes \BBB)_{\Delta}$.
\begin{enumerate}
\item If $\AAA$ is generated by $X_{\AAA}$ and $\BBB$ is generated by $X_{\BBB}$, then $\CCC$ is generated by $X_{\CCC}$ with $X_{\CCC}(n,m) = X_{\AAA}(n,m) \times X_{\BBB}(n,m)$.
\item If $\AAA$ and $\BBB$ are generated in degrees $D$ (resp. finitely generated, resp. connected), then so is $\CCC$.
\end{enumerate}
\end{proposition}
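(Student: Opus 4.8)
The plan is to prove (1) directly and then deduce (2) as a purely formal consequence. For (1), the first thing to record is that, since composition in $\AAA \otimes \BBB$ is computed componentwise, a product of elements of $X_{\CCC}$ is precisely a pure tensor of two ``parallel'' products of generators: given a chain of objects $m = p_0, p_1, \dots, p_r = n$ together with $x_i \in X_{\AAA}(p_i, p_{i-1})$ and $y_i \in X_{\BBB}(p_i, p_{i-1})$, the product $(x_1 \otimes y_1)(x_2 \otimes y_2) \cdots (x_r \otimes y_r)$ taken in $\CCC = (\AAA \otimes \BBB)_{\Delta} \subseteq \AAA \otimes \BBB$ is $(x_1 x_2 \cdots x_r) \otimes (y_1 y_2 \cdots y_r) \in \AAA(n,m) \otimes_k \BBB(n,m) = \CCC(n,m)$. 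Since $\CCC(n,m) = \AAA(n,m) \otimes_k \BBB(n,m)$ is $k$-spanned by pure tensors, and $\otimes_k$ is $k$-bilinear, after writing each tensor factor as a $k$-linear combination of products of generators I would reduce (1) to the statement that every ``bi-product'' $p \otimes q$ --- where $p$ is a product of elements of $X_{\AAA}$ and $q$ is a product of elements of $X_{\BBB}$ with the same source $n$ and target $m$ but a priori different intermediate objects --- is a $k$-linear combination of products of elements of $X_{\CCC}$.

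The hard part will be exactly this: aligning the two one-sided factorizations onto a single common chain of intermediate objects, after which grouping the factors into parallel pairs exhibits the bi-product as a product of $X_{\CCC}$-generators and closes the argument. I would attempt the alignment either by passing to a common refinement of the two chains (re-expanding each generator as a product of generators along the refined chain) or by induction on the total degree $n - m$, peeling off matching initial segments from the two factorizations simultaneously. It is worth noting that in the case of interest for the applications --- $\AAA$ and $\BBB$ generated in degree $1$ --- there is nothing to align: every product of degree-$1$ generators from $n$ to $m$ has length $n - m$ and passes through $m, m+1, \dots, n$, so the two chains automatically agree and the pairing is literal.

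Granting (1), part (2) is immediate. If $X_{\AAA}(n,m) = \varnothing$ whenever $n - m \notin D$, and likewise for $X_{\BBB}$, then $X_{\CCC}(n,m) = X_{\AAA}(n,m) \times X_{\BBB}(n,m)$ is empty as soon as one of its factors is, hence whenever $n - m \notin D$, so $\CCC$ is generated in degrees $D$. For finite generation one uses $\bigcup_{d \in \N} X_{\CCC}(m+d, m) \subseteq \big( \bigcup_{d \in \N} X_{\AAA}(m+d, m) \big) \times \big( \bigcup_{d \in \N} X_{\BBB}(m+d, m) \big)$, which is finite when both factors are. Finally, connectedness requires nothing from (1): $\CCC(n,n) = \AAA(n,n) \otimes_k \BBB(n,n) = k \otimes_k k = k$.
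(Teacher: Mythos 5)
You have set the problem up correctly: composition in $\CCC=(\AAA\otimes\BBB)_{\Delta}$ is componentwise, $\CCC(n,m)=\AAA(n,m)\otimes_k\BBB(n,m)$ is spanned by pure tensors, and (2) does follow formally from (1) (with connectedness needing nothing at all). But the step you yourself flag as ``the hard part'' --- aligning the two factorizations onto a common chain of intermediate objects --- is a genuine gap, and neither of your proposed repairs can close it, because in the stated generality the alignment is impossible: part (1) fails for arbitrary generating sets. Take $A=k[x]$ with $x$ in degree $2$ and $B=k[y]$ with $y$ in degree $3$, let $\AAA$, $\BBB$ be the associated $\Z$-algebras, generated by $X_{\AAA}(m+2,m)=\{x\}$ and $X_{\BBB}(m+3,m)=\{y\}$. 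Then $X_{\CCC}(n,m)=X_{\AAA}(n,m)\times X_{\BBB}(n,m)=\varnothing$ for every $(n,m)$, while $\CCC(6,0)\cong k\neq 0$ (here $\CCC$ is the $\Z$-algebra of $k[z]$ with $z$ of degree $6$), so $X_{\CCC}$ does not generate. The example also shows concretely why both of your strategies break down: a generator such as $x\in\AAA(m+2,m)$ need not factor through the intermediate object $m+1$ (here $\AAA(m+1,m)=0$), so there is no common refinement to re-expand along; and the initial factors of the two factorizations can have different degrees, so there are no matching initial segments to peel off. The same example with $D=\{2,3\}$ shows that the ``generated in degrees $D$'' clause of (2) likewise fails for non-singleton $D$.

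What rescues the statement is precisely your parenthetical remark: if $\AAA$ and $\BBB$ are generated in a single common degree (in the paper, degree $1$), then every factorization of an element of $\AAA(n,m)$ or $\BBB(n,m)$ into generators passes through the same chain $m,m+1,\dots,n$, the pairing $(x_1\otimes y_1)\cdots(x_r\otimes y_r)$ is literal, and your argument is complete; finite generation and connectedness of $\CCC$ then follow exactly as you say. Note that the paper offers no proof of this proposition (it is presented as an observation), and it is only ever invoked in the degree-one, connected, finitely generated setting of Theorem \ref{thmmaintails} and \S \ref{parqcohproj}, where your argument works verbatim. So your proposal is essentially correct for the cases the paper needs, but as written it cannot be completed for the proposition in its full stated generality --- and no argument can be, without a compatibility hypothesis on the generating degrees such as generation in a common single degree.
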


\begin{remark}
It was shown in \cite{dedekenlowen} that for a connected, finitely generated $\Z$-algebra $\AAA$, we have $\LLL_{\mathrm{tails}} = \ttt_{\mathrm{tails}}$.
\end{remark}

Our main result is the following:

\begin{theorem}\label{thmmaintails}
Consider $\Z$-algebras $\AAA$ and $\BBB$ which are generated in degree $1$. The functor $\Delta: (\CCC, \ttt_{\CCC}) \lra (\AAA \otimes \BBB, \ttt_{\AAA} \boxtimes \ttt_{\BBB})$ satisfies (LC). In particular, we have $\ttt_{\CCC} = \Delta^{-1}(\ttt_{\AAA} \boxtimes \ttt_{\BBB})$ and 
$$\Sh(\AAA, \ttt_{\AAA}) \boxtimes \Sh(\BBB, \ttt_{\BBB}) = \Sh(\AAA \otimes \BBB, \ttt_{\AAA} \boxtimes \ttt_{\BBB}) \cong \Sh(\CCC, \ttt_{\CCC}).$$
\end{theorem}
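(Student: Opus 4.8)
The plan is to verify separately the four ingredients of condition (LC) for $\Delta \colon (\CCC, \ttt_{\CCC}) \lra (\AAA \otimes \BBB, \ttt_{\AAA} \boxtimes \ttt_{\BBB})$, namely (F), (FF), (G) and $\ttt_{\CCC} = \Delta^{-1}(\ttt_{\AAA} \boxtimes \ttt_{\BBB})$. Conditions (F) and (FF) are formal: since $\Delta$ is fully faithful, every $c \colon \Delta(n) \lra \Delta(m)$ in $\AAA \otimes \BBB$ equals $\Delta(f)$ for a unique $f \in \CCC(n,m)$, so (F) holds with the maximal covering sieve $\langle \mathrm{id}_n \rangle$, and $\Delta(a) = 0$ forces $a = 0$, so (FF) holds for the same reason. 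The hypothesis that $\AAA$ and $\BBB$ are generated in degree $1$ will be used only through the following remark, which is immediate from that hypothesis together with right exactness of the tensor product: for integers $i \leq j \leq p$ the multiplication map $\AAA(j,i) \otimes \AAA(p,j) \lra \AAA(p,i)$ is surjective (and likewise for $\BBB$), so that for $p, q \geq j$ the elements $(\alpha a) \otimes (\beta b)$ with $\alpha \in \AAA(j,i)$, $a \in \AAA(p,j)$, $\beta \in \BBB(j,i')$, $b \in \BBB(q,j)$ span $\AAA(p,i) \otimes \BBB(q,i')$.

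For (G), I fix $(n,m) \in \AAA \otimes \BBB$ and put $N = \max(n,m)$. Because $\AAA$ and $\BBB$ are positively graded, the morphisms $\Delta(l) = (l,l) \lra (n,m)$ in $\AAA \otimes \BBB$ vanish for $l < N$, so I take as covering family all morphisms $(N,N) \lra (n,m)$, i.e.\ all of $\AAA(N,n) \otimes \BBB(N,m)$. At an object $(p,q)$ the sieve they generate is the $k$-linear span of the elements $(\alpha a) \otimes (\beta b)$ with $\alpha \in \AAA(N,n)$, $a \in \AAA(p,N)$, $\beta \in \BBB(N,m)$, $b \in \BBB(q,N)$, which by the spanning remark (applied with $i = n$, $i' = m$, $j = N$) equals $\AAA(p,n) \otimes \BBB(q,m)$ whenever $p, q \geq N$. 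Hence this sieve contains $\AAA(-,n)_{\geq N} \boxtimes \BBB(-,m)_{\geq N} \in \LLL_{\AAA \otimes \BBB}$, so it is $\ttt_{\AAA} \boxtimes \ttt_{\BBB}$-covering by Lemma~\ref{lemloctentail}.

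For the topology identity, the inclusion $\Delta^{-1}(\ttt_{\AAA} \boxtimes \ttt_{\BBB}) \subseteq \ttt_{\CCC}$ is Proposition~\ref{propdelta}. For the reverse inclusion, $\Delta^{-1}(\ttt_{\AAA} \boxtimes \ttt_{\BBB})$ is a topology, hence contains $\ttt_{\CCC} = \LLL_{\mathrm{tails}}^{\mathrm{upglue}}$ once it contains each generating sieve $\CCC(-,m)_{\geq n}$ with $n \geq m$; that is, it suffices to check that the sieve on $(m,m)$ generated by $\Delta(\CCC(-,m)_{\geq n})$ is $\ttt_{\AAA} \boxtimes \ttt_{\BBB}$-covering. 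Evaluated at $(p,q)$ this sieve is the $k$-linear span of the $(\alpha a) \otimes (\beta b)$ with $l \geq n$, $\alpha \in \AAA(l,m)$, $a \in \AAA(p,l)$, $\beta \in \BBB(l,m)$, $b \in \BBB(q,l)$; retaining only the terms with $l = n$ and invoking the spanning remark (with $i = i' = m$, $j = n$) shows it already equals $\AAA(p,m) \otimes \BBB(q,m)$ for $p, q \geq n$, so it contains $\AAA(-,m)_{\geq n} \boxtimes \BBB(-,m)_{\geq n} \in \LLL_{\AAA \otimes \BBB}$ and is covering by Lemma~\ref{lemloctentail}. This establishes (LC). The remaining assertions are then formal: $\ttt_{\CCC} = \Delta^{-1}(\ttt_{\AAA} \boxtimes \ttt_{\BBB})$ is part of (LC), the equivalence $\Sh(\AAA \otimes \BBB, \ttt_{\AAA} \boxtimes \ttt_{\BBB}) \cong \Sh(\CCC, \ttt_{\CCC})$ is the Lemme de comparaison from \S\ref{parfun} (valid since $\ttt_{\AAA} \boxtimes \ttt_{\BBB}$ is a topology), and $\Sh(\AAA, \ttt_{\AAA}) \boxtimes \Sh(\BBB, \ttt_{\BBB}) = \Sh(\AAA \otimes \BBB, \ttt_{\AAA} \boxtimes \ttt_{\BBB})$ is Definition~\ref{defthedef} applied to the canonical LC morphisms into $\Sh(\AAA, \ttt_{\AAA})$ and $\Sh(\BBB, \ttt_{\BBB})$.

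The genuine obstacle is the inclusion $\ttt_{\CCC} \subseteq \Delta^{-1}(\ttt_{\AAA} \boxtimes \ttt_{\BBB})$ of the previous paragraph: identifying the sieve generated by the diagonal image of a tails sieve with a sieve of the form $\AAA(-,m)_{\geq n} \boxtimes \BBB(-,m)_{\geq n}$ up to refinement is exactly where degree-$1$ generation is indispensable, since without it the generated sieve can be strictly smaller than every $R \boxtimes S$ and the equivalence of sheaf categories breaks down. Everything else, including (G), is a variation on this same computation or a purely formal consequence of $\Delta$ being fully faithful and cocontinuous.
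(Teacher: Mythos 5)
Your proof is correct and follows essentially the same route as the paper, which packages the argument as Proposition \ref{propdelta} (giving $\Delta^{-1}(\ttt_{\AAA} \boxtimes \ttt_{\BBB}) \subseteq \ttt_{\CCC}$ via cocontinuity) together with Lemma \ref{lemdelta1} (condition (G)) and Lemma \ref{lemdelta2} (the reverse inclusion), conditions (F) and (FF) being immediate from full faithfulness of $\Delta$ in both treatments. Your choices of covers differ only cosmetically: for (G) the paper covers $(m_1,m_2)$ with $m_2 \geq m_1$ by the one-sided sieve $\AAA(-,m_1)_{\geq m_2} \boxtimes \BBB(-,m_2)$ and uses degree-$1$ generation in one factor only, while you use the symmetric sieve at level $N = \max(n,m)$ and split both factors; likewise in the reverse inclusion the paper splits only the longer of the two legs at the lower level, whereas you split both at level $n$, and you prove containment of $\AAA(-,m)_{\geq n} \boxtimes \BBB(-,m)_{\geq n}$ rather than equality, which indeed suffices. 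The one statement you assert without justification is that $\Delta^{-1}(\ttt_{\AAA} \boxtimes \ttt_{\BBB})$ is a topology: the preimage cover system of a topology under an arbitrary functor need not be one (the pullback and glueing axioms can fail), so the reduction to the generating sieves $\CCC(-,m)_{\geq n}$ genuinely needs an argument. The paper supplies it in the proof of Lemma \ref{lemdelta2} by invoking \cite[Thm.~2.13]{lowenlin}: since $\Delta$ satisfies (G), (F) and (FF) with respect to the relevant cover systems --- facts you have already established --- the preimage is a topology. With that citation inserted, your proof is complete and coincides with the paper's.
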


\begin{proof}
This follows from Proposition \ref{propdelta} and Lemmas \ref{lemdelta1} and \ref{lemdelta2}.
\end{proof}

\begin{lemma}\label{lemdelta1}
Suppose the $\Z$-algebras $\AAA$ and $\BBB$ are generated in degree $1$. The functor $\Delta: \CCC \lra (\AAA \otimes \BBB, \ttt_{\AAA} \boxtimes \ttt_{\BBB})$ satisfies (G).
\end{lemma}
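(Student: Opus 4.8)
The plan is to produce, for each object $(n,m)$ of $\AAA \otimes \BBB$, an explicit covering family of morphisms out of diagonal objects and to certify it using Lemma \ref{lemloctentail}. Since $\AAA$ and $\BBB$ are positively graded, every morphism $\Delta(l) = (l,l) \lra (n,m)$ lies in $\AAA(l,n) \otimes \BBB(l,m)$ and hence vanishes unless $l \geq N := \max(n,m)$. I would therefore take the family consisting of \emph{all} such morphisms with $l \geq N$, let $R$ be the sieve on $(n,m)$ it generates, and show $R$ is covering for $\ttt_{\AAA} \boxtimes \ttt_{\BBB}$.

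The comparison object is the sieve $S = \AAA(-,n)_{\geq N} \boxtimes \BBB(-,m)_{\geq N}$. Because $N \geq n$ and $N \geq m$, we have $\AAA(-,n)_{\geq N} \in \LLL_{\AAA}$ and $\BBB(-,m)_{\geq N} \in \LLL_{\BBB}$, so $S \in \LLL_{\AAA \otimes \BBB} \subseteq \LLL_{\AAA \otimes \BBB}^{\mathrm{upglue}} = \ttt_{\AAA} \boxtimes \ttt_{\BBB}$ by Lemma \ref{lemloctentail}; thus $S$ is covering. Since $\ttt_{\AAA} \boxtimes \ttt_{\BBB}$ is upclosed, it then suffices to check the inclusion $S \subseteq R$, and (G) follows. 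Unwinding the description of tensor product sieves, $S((p,q))$ is spanned by the elements $a \otimes b$ with $a \in \AAA(p,n)$, $b \in \BBB(q,m)$ and $p, q \geq N$ (for other $(p,q)$ one has $S((p,q)) = 0$), and as $R((p,q))$ is a submodule it is enough to place each such $a \otimes b$ in it. Put $l = \min(p,q)$, so that $n, m \leq N \leq l \leq p, q$. Here the hypothesis enters: since $\AAA$ is generated in degree $1$, cutting each monomial in the degree-one generators at level $l$ shows that $a$ can be written as a finite sum $a = \sum_k e_k c_k$ with $c_k \in \AAA(p,l)$ and $e_k \in \AAA(l,n)$; likewise $b = \sum_{k'} g_{k'} d_{k'}$ with $d_{k'} \in \BBB(q,l)$ and $g_{k'} \in \BBB(l,m)$. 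Then $a \otimes b = \sum_{k,k'} (e_k \otimes g_{k'})(c_k \otimes d_{k'})$, where each $e_k \otimes g_{k'} \in \AAA(l,n) \otimes \BBB(l,m) = (\AAA \otimes \BBB)(\Delta(l), (n,m))$ is one of the generators of $R$ and $c_k \otimes d_{k'} \in (\AAA \otimes \BBB)((p,q), \Delta(l))$; hence $a \otimes b \in R((p,q))$, proving $S \subseteq R$.

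The only genuinely load-bearing point is this factorization through an intermediate level $l$: for an arbitrary tails site a morphism $p \lra n$ need not be a sum of composites through $\AAA(l,n)$ when $n < l < p$, which is precisely why $\Delta$ can fail (G) without the degree-one generation hypothesis. I would therefore record, as a preliminary observation read off Definition \ref{defgen}, the fact that if $\AAA$ is generated in degree $1$ and $n \leq l \leq p$ then every element of $\AAA(p,n)$ is a finite sum of composites in $\AAA(l,n) \circ \AAA(p,l)$; everything else is routine bookkeeping with the concrete form of $R \boxtimes S$ and of $\LLL_{\AAA \otimes \BBB}$.
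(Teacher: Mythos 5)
Your proof is correct and takes essentially the same route as the paper: exhibit a tails-type cover of the object $(n,m)$ and use generation in degree $1$ to factor its elements through diagonal objects. The paper's version is marginally leaner, taking (for $m_2 \geq m_1$) the one-sided cover $\AAA(-,m_1)_{\geq m_2} \boxtimes \BBB(-,m_2)$ and factoring only the $\AAA$-component, so that the covering family comes from the single diagonal object $(m_2,m_2)$, but this is a cosmetic difference from your symmetric cover at level $N=\max(n,m)$.
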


\begin{proof}
Consider $(m_1, m_2) \in \AAA \otimes \BBB$. Suppose for instance that $m_2 \geq m_1$. Consider the cover $\AAA(-, m_1)_{\geq m_2} \boxtimes \BBB(-, m_2) \in \ttt_{\AAA} \boxtimes \ttt_{\BBB}(m_1, m_2)$. We claim that this cover is generated by the morphisms $x \otimes 1 \in \AAA(m_2, m_1) \otimes \BBB(m_2, m_2)$ from the diagonal element $(m_2, m_2)$ to $(m_1, m_2)$. Indeed, for an element $a \otimes b \in \AAA(l_1, m_1) \otimes \BBB(l_2, m_2)$ with $l_1 \geq m_2$, by the hypothesis on $\AAA$ we can write $a = \sum_{i = 1}^k a_i' a''_i$ for $a'_i \in \AAA(m_2, m_1)$ and $a''_i \in \AAA(l_1, m_2)$. Hence, $a \otimes b = \sum_{i = 1}^k(a'_i \otimes 1)(a''_i \otimes b)$ as desired.
\end{proof}

\begin{lemma}\label{lemdelta2}
Suppose the $\Z$-algebras $\AAA$ and $\BBB$ which are generated in degree $1$. We have $\ttt_{\CCC} \subseteq \Delta^{-1}(\ttt_{\AAA} \boxtimes \ttt_{\BBB})$.
\end{lemma}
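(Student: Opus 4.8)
The plan is to prove the inclusion $\ttt_{\CCC} \subseteq \Delta^{-1}(\ttt_{\AAA} \boxtimes \ttt_{\BBB})$ directly; combined with Proposition~\ref{propdelta} it yields the equality $\ttt_{\CCC} = \Delta^{-1}(\ttt_{\AAA} \boxtimes \ttt_{\BBB})$ recorded in Theorem~\ref{thmmaintails}. The starting observation is that $\ttt_{\CCC}$ is, by definition of the tails topology, the smallest topology on $\CCC$ containing the basic sieves $\CCC(-,m)_{\geq n}$ for $n \geq m$. Since the inverse image $\Delta^{-1}(\ttt_{\AAA} \boxtimes \ttt_{\BBB})$ of a topology along a linear functor is again a topology --- in the $k$-linear setting this is straightforward, since sieves are submodules and the glueing axiom is then inherited by a finite intersection argument --- it suffices to check that each basic sieve lies in $\Delta^{-1}(\ttt_{\AAA} \boxtimes \ttt_{\BBB})$, i.e.\ that the sieve $\langle \Delta(\CCC(-,m)_{\geq n}) \rangle$ generated inside $\AAA \otimes \BBB$ on $\Delta(m) = (m,m)$ is covering for $\ttt_{\AAA} \boxtimes \ttt_{\BBB}$.

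To establish this I would prove the stronger inclusion
\[
\AAA(-,m)_{\geq n} \boxtimes \BBB(-,m)_{\geq n} \subseteq \langle \Delta(\CCC(-,m)_{\geq n}) \rangle .
\]
The left-hand side lies in $\LLL_{\AAA \otimes \BBB}$, hence in $\ttt_{\AAA} \boxtimes \ttt_{\BBB} = \LLL_{\AAA \otimes \BBB}^{\mathrm{upglue}}$ by Lemma~\ref{lemloctentail}, and since topologies are upward closed this inclusion is enough. For the inclusion itself, recall that an element of $(\AAA(-,m)_{\geq n} \boxtimes \BBB(-,m)_{\geq n})(l_1,l_2)$ is a finite sum of pure tensors $\alpha \otimes \beta$ with $\alpha \in \AAA(l_1,m)$ and $l_1 \geq n$, and $\beta \in \BBB(l_2,m)$ and $l_2 \geq n$; so it is enough to treat a single $\alpha \otimes \beta$. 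This is where degree-$1$ generation enters: since $l_1 \geq n \geq m$, splitting each product of degree-$1$ elements at level $n$ gives $\alpha = \sum_s a_s a_s'$ with $a_s \in \AAA(n,m)$ and $a_s' \in \AAA(l_1,n)$, and likewise $\beta = \sum_t b_t b_t'$ with $b_t \in \BBB(n,m)$ and $b_t' \in \BBB(l_2,n)$. Then
\[
\alpha \otimes \beta = \sum_{s,t} (a_s a_s') \otimes (b_t b_t') = \sum_{s,t} (a_s \otimes b_t)(a_s' \otimes b_t'),
\]
where $a_s \otimes b_t \in \AAA(n,m) \otimes \BBB(n,m) = \CCC(n,m)$ consists of morphisms of the sieve $\CCC(-,m)_{\geq n}$ (as $n \geq n$), so that, viewed as a morphism $(n,n) \to (m,m)$, it belongs to the generating family $\Delta(\CCC(-,m)_{\geq n})$, while $a_s' \otimes b_t' \colon (l_1,l_2) \to (n,n)$ is a morphism of $\AAA \otimes \BBB$. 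Hence $\alpha \otimes \beta$ lies in the sieve generated by $\Delta(\CCC(-,m)_{\geq n})$, which is what was needed.

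The crux of the argument is this factorization step: degree-$1$ generation is exactly what lets one rewrite a pure tensor $\alpha \otimes \beta$, whose two legs a priori live over unrelated levels $l_1$ and $l_2$, so that it factors through a common diagonal object $(n,n)$ --- the only way for it to sit inside $\langle \Delta(\CCC(-,m)_{\geq n}) \rangle$. This is the same mechanism that makes Lemma~\ref{lemdelta1} work, and without it both that lemma and the present one fail. The only other point requiring care is the reduction, in the first step, to the basic generating sieves, which relies on $\Delta^{-1}(\ttt_{\AAA} \boxtimes \ttt_{\BBB})$ being a topology; this uses $k$-linearity in an essential way and has no counterpart in the topos-theoretic picture.
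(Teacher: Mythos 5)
Your second step --- the factorization of a pure tensor $\alpha \otimes \beta$ through a diagonal object using generation in degree $1$, so that $\AAA(-,m)_{\geq n} \boxtimes \BBB(-,m)_{\geq n} \subseteq \langle \Delta(\CCC(-,m)_{\geq n})\rangle$ and hence the image sieve is covering by Lemma \ref{lemloctentail} and up-closedness --- is correct and is essentially the paper's own argument (the paper splits only one leg at level $\min(l_1,l_2)$ and even gets equality of the two sieves; splitting both legs at level $n$ as you do works just as well).

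The gap is in your first step. You assert that $\Delta^{-1}(\ttt_{\AAA} \boxtimes \ttt_{\BBB})$ is a topology because ``the inverse image of a topology along a linear functor is again a topology,'' calling this straightforward. It is not: for a general linear functor $\phi$ the cover system $\phi^{-1}\ttt = \{R \;|\; \langle \phi R\rangle \in \ttt\}$ need not satisfy the pullback or glueing axioms (knowing $\phi(f)^{-1}\langle\phi R\rangle$ is covering does not put it inside $\langle \phi(f^{-1}R)\rangle$), and the ``finite intersection argument'' you allude to is not an argument. This is exactly why the Lemme de comparaison and \cite[Thm.\ 2.13]{lowenlin} impose the conditions (G), (F), (FF) before concluding that the inverse image cover system is a topology; if your claim were true, that clause would be vacuous, and moreover your reduction to the basic tails sieves would then need no hypothesis on $\AAA$ and $\BBB$ at all --- whereas in fact the degree-$1$ hypothesis enters this step too. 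The repair is the one the paper makes: $\Delta$ is fully faithful, so it satisfies (F) and (FF) trivially, and it satisfies (G) with respect to $\ttt_{\AAA}\boxtimes\ttt_{\BBB}$ by Lemma \ref{lemdelta1} (this is where generation in degree $1$ is used again); then \cite[Thm.\ 2.13]{lowenlin} gives that $\Delta^{-1}(\ttt_{\AAA}\boxtimes\ttt_{\BBB})$ is a topology, and since $\ttt_{\CCC}$ is the smallest topology containing $\LLL_{\CCC}$, your verification on the basic sieves finishes the proof. (Your closing remark that this point ``has no counterpart in the topos-theoretic picture'' is also misleading: the same subtlety occurs over $\Set$.)
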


\begin{proof}
Note that since $\Delta$ satisfies (G), (F) and (FF) with respect to $\ttt_{\AAA} \boxtimes \ttt_{\BBB}$ and $\Delta^{-1}(\ttt_{\AAA} \boxtimes \ttt_{\BBB})$, it follows by \cite[Thm. 2.13]{lowenlin} that the cover system $\Delta^{-1}(\ttt_{\AAA} \boxtimes \ttt_{\BBB})$ is a topology. Hence, to prove the desired inclusion, it suffices to show that $\LLL_{\CCC} \subseteq \Delta^{-1}(\ttt_{\AAA} \boxtimes \ttt_{\BBB})$. Consider $S = \CCC(-, m)_{\geq n} \in \LLL_{\CCC}(m)$. We are to show that $\Delta S \in \ttt_{\AAA} \boxtimes \ttt_{\BBB}(m,m)$. Now $\Delta S$ is generated by the morphisms in $\AAA(l,m) \otimes \BBB(l,m)$ for $l \geq n$. We claim that $\Delta S = \AAA(-,m)_{\geq n} \boxtimes \BBB(-,m)_{\geq n}$. To this end, we take an element $a \otimes b \in \AAA(l_1, m) \otimes \BBB(l_2, m)$ with $l_1, l_2 \geq n$. If for instance $l_2 \geq l_1$, by the hypothesis on $\BBB$, we can write $b = \sum_{i = 1}^k b'_i b''_i$ for $b'_i \in \BBB(l_1, m)$ and $b''_i \in \BBB(l_2, l_1)$ and hence $a \otimes b = \sum_{i = 1}^k (a \otimes b'_i)(1 \otimes b''_i) \in \Delta S$.
\end{proof}

\subsection{Quasicoherent sheaves on projective schemes}\label{parqcohproj}
Next we apply the results of \S \ref{parzalg} to graded algebras and schemes. A \emph{graded algebra} $A = (A_n)_{n \in \N}$ is an algebra $A = \oplus_{n \in \N} A_n$ with $1 \in A_0$ and multiplication determined by $A_n \otimes A_m \lra A_{n + m}$. Such an algebra has an associated $\Z$-algebra $\AAA(A)$ with $\AAA(A)(n, m) = A_{n-m}$. The algebra $A$ is generated in degrees $D \subseteq \N$ (resp. finitely generated, resp. connected) if and only if the associated $\Z$-algebra $\AAA(A)$ is. Now if $A$ is a finitely generated, connected graded algebra, the category $\mathsf{Gr}(A)$ of graded $A$-modules has a localizing subcategory $\mathsf{Tors}(A)$ of torsion modules, and one obtains the quotient category $\mathsf{Qgr}(A) = \mathsf{Gr}(A)/\mathsf{Tors}(A)$. By Serre's theorem, if $A$ is commutative with associated projective scheme $\mathrm{Proj}(A)$, we have $\Qch(\mathrm{Proj}(A)) = \mathsf{Qgr}(A)$. The category $\Qgr(A)$ has been generalized to certain classes of $\Z$-algebras in \cite{staffordvandenbergh}, \cite{vandenbergh2}, \cite{polishchuk} and in \cite{dedekenlowen}, the category $\Sh(\AAA, \ttt_{\mathrm{tails}})$ is introduced as a further generalization to arbitrary $\Z$-algebras. In particular, for a finitely generated connected graded algebra $A$, we have 
\begin{equation}\label{qgrsh}
\Qgr(A) \cong \Sh(\AAA(A), \ttt_{\mathrm{tails}})
\end{equation}

Next we turn to tensor products. For two graded algebras $A$ and $B$, the \emph{cartesian product} $A \times_{\mathrm{cart}} B$ is defined by $(A \times_{\mathrm{cart}} B)_n = A_n \otimes B_n$. We clearly have
\begin{equation}\label{eqcart}
\AAA(A \times_{\mathrm{cart}} B) = (\AAA(A) \otimes \AAA(B))_{\Delta}.
\end{equation}

\begin{theorem}\label{thmscheme}
\begin{enumerate}
\item For two graded algebras $A$ and $B$ which are connected and finitely generated in degree $1$, we have
$$\Qgr(A) \boxtimes \Qgr(B) = \Qgr(A \times_{\mathrm{cart}} B).$$
\item For two projective schemes $X$ and $Y$, we have
$$\Qch(X) \boxtimes \Qch(Y) = \Qch(X \times Y).$$
\end{enumerate} 
\end{theorem}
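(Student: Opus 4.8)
### Proof proposal for Theorem \ref{thmscheme}

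The plan is to derive both statements from Theorem \ref{thmmaintails} together with the bridge \eqref{qgrsh} between graded algebras and $\Z$-algebras, and the identity \eqref{eqcart} relating the cartesian product of graded algebras to the diagonal $\Z$-algebra. For part (1), let $A$ and $B$ be connected graded algebras which are finitely generated in degree $1$, and set $\AAA = \AAA(A)$, $\BBB = \AAA(B)$. By the translation between generation properties of $A$ and of $\AAA(A)$, both $\AAA$ and $\BBB$ are connected, finitely generated $\Z$-algebras which are generated in degree $1$; hence $\CCC = (\AAA \otimes \BBB)_{\Delta} = \AAA(A \times_{\mathrm{cart}} B)$ is connected and finitely generated in degree $1$ as well. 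For such $\Z$-algebras the remark following Definition \ref{defgen} gives $\LLL_{\mathrm{tails}} = \ttt_{\mathrm{tails}}$, so the sheaf categories $\Sh(\AAA, \ttt_{\AAA})$, $\Sh(\BBB, \ttt_{\BBB})$, $\Sh(\CCC, \ttt_{\CCC})$ are all instances of the categories in \eqref{qgrsh}. Theorem \ref{thmmaintails} applied to $\AAA$ and $\BBB$ (which are generated in degree $1$) then gives
$$\Qgr(A) \boxtimes \Qgr(B) \cong \Sh(\AAA, \ttt_{\AAA}) \boxtimes \Sh(\BBB, \ttt_{\BBB}) \cong \Sh(\CCC, \ttt_{\CCC}) \cong \Qgr(A \times_{\mathrm{cart}} B),$$
where the outer two equivalences are \eqref{qgrsh} (for $A$, $B$ and for $A \times_{\mathrm{cart}} B$ via \eqref{eqcart}) and the middle one is the conclusion of Theorem \ref{thmmaintails}. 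Strictly, one should also check that the equivalences $\Qgr(A) \cong \Sh(\AAA(A), \ttt_{\mathrm{tails}})$ from \eqref{qgrsh} arise from LC morphisms $(\AAA(A), \ttt_{\mathrm{tails}}) \lra \Qgr(A)$, so that Definition \ref{defthedef} indeed computes $\Qgr(A) \boxtimes \Qgr(B)$ through these particular sites; this is immediate from the general Gabri\"el--Popescu theorem recalled in \S \ref{partensorgroth}, since the composite $\CCC \lra \Mod(\CCC) \lra \Sh(\CCC, \ttt_{\mathrm{tails}}) \cong \Qgr(A)$ is an LC morphism.

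For part (2), let $X$ and $Y$ be projective schemes. By definition a projective scheme admits a closed embedding into some $\PP^N$, equivalently $X = \mathrm{Proj}(A)$ for a connected graded algebra $A$ generated in degree $1$ by finitely many elements (take $A$ to be the homogeneous coordinate ring for a chosen projective embedding), and similarly $Y = \mathrm{Proj}(B)$. Serre's theorem gives $\Qch(X) = \Qgr(A)$ and $\Qch(Y) = \Qgr(B)$. Moreover $X \times Y$ embeds in $\PP^N \times \PP^M$, and via the Segre embedding one identifies $X \times Y$ with $\mathrm{Proj}(A \times_{\mathrm{cart}} B)$ (the cartesian product $A \times_{\mathrm{cart}} B$, with $(A \times_{\mathrm{cart}} B)_n = A_n \otimes B_n$, is precisely the Segre product, whose Proj computes the product scheme); since $A$ and $B$ are generated in degree $1$, so is $A \times_{\mathrm{cart}} B$, and it remains connected and finitely generated, so Serre's theorem applies again to give $\Qch(X \times Y) = \Qgr(A \times_{\mathrm{cart}} B)$. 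Combining these three identifications with part (1) yields
$$\Qch(X) \boxtimes \Qch(Y) = \Qgr(A) \boxtimes \Qgr(B) = \Qgr(A \times_{\mathrm{cart}} B) = \Qch(X \times Y).$$

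The main obstacle I expect is not the category-theoretic core — that is entirely handled by Theorem \ref{thmmaintails} — but the classical geometric bookkeeping in part (2): namely, justifying that the Segre product of homogeneous coordinate rings computes the fibre product scheme, and that one may always choose the defining graded algebras to be connected and generated in degree $1$ (this uses that a very ample line bundle can be replaced by a sufficiently high Veronese power to ensure projective normality / generation in degree $1$, which may change $A$ but not $\mathrm{Proj}(A)$ nor, by Serre's theorem, $\Qgr(A)$). One should also be slightly careful that the two possible sites for $\Qgr(A \times_{\mathrm{cart}} B)$ — the one coming from the diagonal $\Z$-algebra and the one coming directly from the graded algebra $A \times_{\mathrm{cart}} B$ — agree, but this is exactly the content of \eqref{eqcart} combined with \eqref{qgrsh}. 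All remaining steps are routine invocations of results already established in the excerpt.
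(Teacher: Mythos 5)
Your proposal is correct and follows essentially the same route as the paper: part (1) is exactly the combination of \eqref{qgrsh}, Theorem \ref{thmmaintails} and \eqref{eqcart}, and part (2) reduces to part (1) by writing $X \cong \mathrm{Proj}(A)$, $Y \cong \mathrm{Proj}(B)$ for connected graded algebras generated in degree $1$ and invoking Serre's theorem together with the identification of $\mathrm{Proj}$ of the Segre product with the product scheme. The extra checks you flag (that the \eqref{qgrsh} equivalences come from LC morphisms, and the classical facts about Veronese/Segre) are sensible bookkeeping that the paper leaves implicit, but they do not change the argument.
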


\begin{proof}
(1) Put $\AAA = \AAA(A)$, $\BBB = \AAA(B)$. According to \eqref{qgrsh} and Theorem \ref{thmmaintails}, we have
$$\Qgr(A) \boxtimes \Qgr(B) \cong \Sh(\AAA, \ttt_{\mathrm{tails}}) \boxtimes \Sh(\BBB, \ttt_{\mathrm{tails}}) \cong \Sh((\AAA \otimes \BBB)_{\Delta}, \ttt_{\mathrm{tails}})$$
and by \eqref{eqcart} and \eqref{qgrsh} , the category on the right hand side is isomorphic to $\Qgr(A \times_{\mathrm{cart}} B)$.
(2) It suffices to write $X \cong \mathrm{Proj}(A)$ and $Y \cong \mathrm{Proj}(B)$ for connected graded algebras generated in degree $1$.
\end{proof}

\begin{remark}
The formula $\Qch(X) \boxtimes \Qch(Y) = \Qch(X \times Y)$ should hold in greater generality, at least for schemes and suitable stacks. This will follow from the appropriate compatibility between tensor products and descent, and is work in progress.
\end{remark}

\section{Relation with other tensor products}
Our tensor product of Grothendieck categories is in close relation with two well-known tensor products of categories. In this section we analyse those relations.

The first one is the tensor product of locally presentable categories. It is well-known that every Grothendieck category is locally presentable. In \S \ref{parlocpres} we prove that taking our tensor product of two Grothendieck categories coincides with taking their tensor product as locally presentable categories. In particular, the class of locally $\alpha$-presentable Grothendieck categories for a fixed cardinal $\alpha$ is stable under our tensor product. This applies, for example, to the class of locally finitely presentable Grothendieck categories. This should be contrasted with the more restrictive class of locally coherent Grothendieck categories, which is not preserved, as is already clear from the ring case. 

The second one is Deligne's tensor product of small abelian categories. In \S \ref{pardeligne}, for small abelian categories $\aaa$ and $\bbb$ with associated Grothendieck categories $\Lex(\aaa)$ and $\Lex(\bbb)$ of left exact modules, based upon \cite{franco} the tensor product $\Lex(\aaa) \boxtimes \Lex(\bbb)$ is shown to be locally coherent precisely when the Deligne tensor product of $\aaa$ and $\bbb$ exists, and in this case the Deligne tensor product is given by the abelian category of finitely presented objects in $\Lex(\aaa) \boxtimes \Lex(\bbb)$.
Following a suggestion by Henning Krause, in \S \ref{paralphadel} we define an $\alpha$-version of the Deligne tensor product which is shown to underly any given tensor product of Grothendieck categories, as long as we choose $\alpha$ sufficiently large.

\subsection{Tensor product of locally presentable categories}\label{parlocpres}
Local presentability of categories is classically considered in a non-enriched context \cite{gabrielulmer}, for which enriched analogues exist \cite{kelly2}. In the case of $k$-linear categories, where enrichement is over $\Mod(k)$, the classical and the enriched notions of local presentability coincide. For the constructions considered in this section though, it is essential to work enriched over $\Mod(k)$. All categories and constructions in this section are to be understood in the $k$-linear sense.
 
Recall that a $k$-linear category $\ccc$ is \emph{locally presentable} if it is cocomplete and there exists a small regular cardinal $\alpha$ such that $\ccc$ has a set of strong generators consisting of \emph{$\alpha$-presented objects}, that is objects $G \in \ccc$ such that the $k$-linear functor $\ccc(G,-): \ccc \lra \Mod(k)$ preserves $\alpha$-filtered colimits. In this case the full subcategory $\ccc_{\alpha}$ of $\alpha$-presented objects is small, $\alpha$-cocomplete (i.e. closed under $\alpha$-small colimits) and it is obtained as the closure of the category of generators under $\alpha$-small colimits \cite{kelly2}. When we want to make the cardinal $\alpha$ explicit we will say $\mathcal{C}$ is \emph{locally $\alpha$-presentable}. 
Observe that this notion is a generalization to bigger cardinals of the notion of \emph{locally finitely presentable} $k$-linear category, which is obtained as the particular case with $\alpha = \aleph_0$. In that case we write $\fp(\ccc) = \ccc_{\aleph_0}$.

It is well known that Grothendieck categories are locally presentable (see for example \cite[Prop 3.4.16]{borceux}).

Consider $k$-linear categories $\aaa$, $\bbb$ and $\ccc$. We denote by $\Cont(\aaa, \bbb)$ (resp. by $\Cont_{\alpha}(\aaa, \bbb)$) the category of $k$-linear continuous (resp. $\alpha$-continuous) functors from $\aaa$ to $\bbb$, that is functors preserving all (existing) limits (resp. $\alpha$-small limits). We denote by $\Cont(\aaa, \bbb; \ccc)$ (resp. by $\Cont_{\alpha}(\aaa, \bbb; \ccc)$) the category of functors $\aaa \times \bbb \lra \ccc$ which are $k$-linear and continuous in each variable.

The categories $\Cocont(\aaa, \bbb)$, $\Cocont_{\alpha}(\aaa, \bbb)$, $\Cocont(\aaa, \bbb; \ccc)$ and $\Cocont_{\alpha}(\aaa, \bbb; \ccc)$ are defined similarly with limits replaced by colimits.

In the following theorem a tensor product of locally presentable categories is described.

\begin{theorem}\cite[Lem. 2.6, Rem. 2.7]{brandenburgchirvasitujohnsonfreyd}, \cite[\textsection2]{cavigliahorel}, \cite[Cor. 2.2.5]{chirvasitujohnsonfreyd}
Consider locally presentable $k$-linear categories $\aaa$ and $\bbb$.
	\begin{enumerate}
		\item The category $\Cocont(\aaa,\bbb)$ of $k$-linear cocontinuous functors is also a  locally presentable $k$-linear category.
		\item There exists a locally presentable $k$-linear category $\aaa \boxtimes_{\mathsf{LP}}\bbb$ such that for every cocomplete $k$-linear category $\ccc$ there is a natural equivalence of categories:
		$$\Cocont(\aaa \boxtimes_{\mathsf{LP}} \bbb, \ccc) \cong \Cocont(\aaa, \bbb;\ccc) \cong \Cocont(\aaa, \Cocont(\bbb,\ccc))$$
		\item In (2) we can take $\aaa \boxtimes_{\mathsf{LP}}\bbb = \Cont(\aaa\op, \bbb)$.
	\end{enumerate}
\end{theorem}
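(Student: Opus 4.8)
The statement is classical, and the line of argument I would follow is to reduce everything to small $\alpha$-(co)complete $k$-linear categories and then chain universal properties. First, fix a regular cardinal $\alpha$ for which both $\aaa$ and $\bbb$ are locally $\alpha$-presentable, and pass to the small, $\alpha$-cocomplete full subcategories $\aaa_{\alpha}$ and $\bbb_{\alpha}$ of $\alpha$-presented objects. The two facts I would lean on are: (a) $\aaa$ is the free cocompletion of $\aaa_{\alpha}$ under $\alpha$-filtered colimits, so that restriction along $\aaa_{\alpha} \hookrightarrow \aaa$ gives, for every cocomplete $k$-linear $\ddd$, an equivalence $\Cocont(\aaa, \ddd) \simeq \Cocont_{\alpha}(\aaa_{\alpha}, \ddd)$, the inverse being left Kan extension; and (b) dually, $\aaa\op$ is the completion of $\aaa_{\alpha}\op$ under $\alpha$-cofiltered limits, so that $\Cont(\aaa\op, \ddd) \simeq \Lex_{\alpha}(\aaa_{\alpha}\op, \ddd)$ for every complete $\ddd$. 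Both are instances of Gabri\"el--Ulmer-type duality in the $\Mod(k)$-enriched setting \cite{kelly2}.

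Next I would construct, at the small level, the $k$-linear category $\aaa_{\alpha} \boxtimes_{\alpha} \bbb_{\alpha}$ as the universal small $\alpha$-cocomplete $k$-linear category equipped with a $k$-linear functor $\aaa_{\alpha} \times \bbb_{\alpha} \to \aaa_{\alpha} \boxtimes_{\alpha} \bbb_{\alpha}$ that is $\alpha$-cocontinuous in each variable; concretely it is an $\alpha$-cocontinuous localization of the free $\alpha$-cocompletion $\mathsf{P}_{\alpha}(\aaa_{\alpha} \otimes \bbb_{\alpha})$ of the plain tensor, and its existence and smallness follow from Kelly's theory of free completions with imposed relations \cite{kelly2}. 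By construction, for $\alpha$-cocomplete $\ccc$ one has $\Cocont_{\alpha}(\aaa_{\alpha} \boxtimes_{\alpha} \bbb_{\alpha}, \ccc) \simeq \Cocont_{\alpha}\bigl(\aaa_{\alpha}, \Cocont_{\alpha}(\bbb_{\alpha}, \ccc)\bigr)$ by the defining property of $\aaa_{\alpha} \boxtimes_{\alpha} \bbb_{\alpha}$ followed by currying (here $\Cocont_{\alpha}(\bbb_{\alpha}, \ccc)$ is $\alpha$-cocomplete since an $\alpha$-small colimit of $\alpha$-cocontinuous functors, formed pointwise, is again $\alpha$-cocontinuous). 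I would then set $\aaa \boxtimes_{\mathsf{LP}} \bbb := \Ind_{\alpha}(\aaa_{\alpha} \boxtimes_{\alpha} \bbb_{\alpha})$, which is locally $\alpha$-presentable, and for cocomplete $\ccc$ chain the identifications from (a) (using that $\Cocont(\bbb, \ccc)$ is cocomplete):
\begin{align*}
\Cocont(\aaa \boxtimes_{\mathsf{LP}} \bbb, \ccc)
&\simeq \Cocont_{\alpha}(\aaa_{\alpha} \boxtimes_{\alpha} \bbb_{\alpha}, \ccc)
\simeq \Cocont_{\alpha}\bigl(\aaa_{\alpha}, \Cocont_{\alpha}(\bbb_{\alpha}, \ccc)\bigr) \\
&\simeq \Cocont_{\alpha}\bigl(\aaa_{\alpha}, \Cocont(\bbb, \ccc)\bigr)
\simeq \Cocont\bigl(\aaa, \Cocont(\bbb, \ccc)\bigr)
\simeq \Cocont(\aaa, \bbb; \ccc),
\end{align*}
naturally in $\ccc$; this proves (2), and independence of $\alpha$ follows from uniqueness of representing objects. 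Statement (1) I would deduce in the same spirit (or quote from \cite{kelly2}, \cite{adamekrosicky}): via (a), $\Cocont(\aaa, \bbb) \simeq \Cocont_{\alpha}(\aaa_{\alpha}, \bbb) \simeq \Ind_{\alpha}\bigl(\Cocont_{\alpha}(\aaa_{\alpha}, \bbb_{\alpha})\bigr)$, the last equivalence a standard computation and $\Cocont_{\alpha}(\aaa_{\alpha}, \bbb_{\alpha})$ being small and $\alpha$-cocomplete, so $\Cocont(\aaa, \bbb)$ is locally $\alpha$-presentable.

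For (3), by (2) it suffices to show that $\Cont(\aaa\op, \bbb)$ has the same universal property, and I would do this by identifying it with $\Ind_{\alpha}(\aaa_{\alpha} \boxtimes_{\alpha} \bbb_{\alpha})$. Starting from (b) one has $\Cont(\aaa\op, \bbb) \simeq \Lex_{\alpha}(\aaa_{\alpha}\op, \bbb)$, and writing $\bbb \simeq \Lex_{\alpha}(\bbb_{\alpha}\op, \Mod(k))$, the $k$-linear tensor--hom adjunction turns an $\alpha$-limit preserving functor $\aaa_{\alpha}\op \to \Lex_{\alpha}(\bbb_{\alpha}\op, \Mod(k))$ into a $k$-linear functor $\aaa_{\alpha}\op \times \bbb_{\alpha}\op \to \Mod(k)$ preserving $\alpha$-small limits separately in each variable, equivalently (dualising the defining property of $\aaa_{\alpha} \boxtimes_{\alpha} \bbb_{\alpha}$) an $\alpha$-limit preserving functor $(\aaa_{\alpha} \boxtimes_{\alpha} \bbb_{\alpha})\op \to \Mod(k)$, i.e.\ an object of $\Ind_{\alpha}(\aaa_{\alpha} \boxtimes_{\alpha} \bbb_{\alpha})$; as all these equivalences are natural, they assemble into the desired equivalence of categories.

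I expect the main obstacle to be the bookkeeping of opposite categories and enrichment in (3): one must verify carefully that ``continuous out of $\aaa\op$'' is genuinely controlled by ``$\alpha$-limit preserving out of the small category $\aaa_{\alpha}\op$'' (which rests on $\aaa\op$ being the $\alpha$-cofiltered-limit completion of $\aaa_{\alpha}\op$), and that the $\Mod(k)$-enriched tensor--hom adjunction and the formation of $\aaa_{\alpha} \boxtimes_{\alpha} \bbb_{\alpha}$ interact correctly with the $\alpha$-(co)completions --- it is exactly here that working enriched over $\Mod(k)$ rather than over $\Set$ is used. A secondary, more routine point is checking that each category of $\alpha$-(co)continuous functors between small $k$-linear categories occurring above is again small and $\alpha$-(co)complete, so that the iterated applications of $\Ind_{\alpha}$ and $\mathsf{P}_{\alpha}$ are legitimate; the explicit model of $\aaa_{\alpha} \boxtimes_{\alpha} \bbb_{\alpha}$ inside $\mathsf{P}_{\alpha}(\aaa_{\alpha} \otimes \bbb_{\alpha})$ is what makes this tractable.
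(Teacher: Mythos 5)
First, a point of comparison: the paper does not prove this theorem at all --- it is quoted from \cite{brandenburgchirvasitujohnsonfreyd}, \cite{cavigliahorel}, \cite{chirvasitujohnsonfreyd} --- so there is no in-paper proof to match. Your argument for (2) and (3) is essentially sound and is very close in spirit to what the paper does later in Proposition \ref{prodlocpres}: reduce to the small $\alpha$-cocomplete categories $\aaa_{\alpha}$, $\bbb_{\alpha}$, use the free $\alpha$-filtered cocompletion adjunction \eqref{alphafree} and its dual, and chain universal properties. One good feature of your write-up is that you construct the $\alpha$-cocomplete tensor product $\aaa_{\alpha}\boxtimes_{\alpha}\bbb_{\alpha}$ directly from Kelly's theory rather than via $\boxtimes_{\mathsf{LP}}$ (the paper defines $\otimes_{\alpha}$ \emph{from} $\boxtimes_{\mathsf{LP}}$, which would be circular for your purposes); you do need, and correctly invoke, the fact that its universal property holds against arbitrary, possibly large, $\alpha$-cocomplete $k$-linear categories (e.g.\ $\Mod(k)^{\op}$ in step (3)).

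There is, however, a genuine gap in your treatment of (1): the asserted ``standard computation'' $\Cocont(\aaa,\bbb)\simeq\Cocont_{\alpha}(\aaa_{\alpha},\bbb)\simeq\Ind_{\alpha}\bigl(\Cocont_{\alpha}(\aaa_{\alpha},\bbb_{\alpha})\bigr)$ is false in general. Take $k$ a field, $\alpha=\aleph_0$, $\aaa=\Mod(k[x])$, $\bbb=\Mod(k)$. By Eilenberg--Watts, $\Cocont(\aaa,\bbb)\simeq\Cocont_{\aleph_0}(\aaa_{\aleph_0},\bbb)\simeq\Mod(k[x])$ (via $F\mapsto F(k[x])$), whereas $\Cocont_{\aleph_0}(\aaa_{\aleph_0},\bbb_{\aleph_0})$ is the category of \emph{finite-dimensional} $k[x]$-modules, whose $\Ind$-completion is the category of locally finite $k[x]$-modules; this is not equivalent to $\Mod(k[x])$ (the regular module $k[x]$ is not a filtered colimit of finite-dimensional modules). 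The underlying error is the tacit assumption that an $\alpha$-cocontinuous functor $\aaa_{\alpha}\lra\bbb$ is an $\alpha$-filtered colimit of ones landing in $\bbb_{\alpha}$, equivalently that functors with values in $\bbb_{\alpha}$ are the $\alpha$-presentable objects of this functor category; neither holds when $\aaa_{\alpha}$ is not $\alpha$-small. Statement (1) is of course true, but it needs a different argument --- e.g.\ the accessibility argument of \cite[Lem.~2.6]{brandenburgchirvasitujohnsonfreyd}: $\Cocont(\aaa,\bbb)\simeq\Cocont_{\alpha}(\aaa_{\alpha},\bbb)$ is closed under pointwise colimits in the functor category and is accessible (being the category of $\bbb$-valued models of a colimit sketch on $\aaa_{\alpha}$), hence locally presentable --- and in general one should not expect its presentability rank to be read off from $\Cocont_{\alpha}(\aaa_{\alpha},\bbb_{\alpha})$ in the way you propose. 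Note also that this faulty identification is not needed elsewhere: your chain for (2) and (3) never uses it.
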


For small $\alpha$-cocomplete $k$-linear categories $\CCC$ and $\DDD$, we put $$\Lex_{\alpha}(\CCC) = \Cont_{\alpha}(\CCC\op, \Mod(k)) \subseteq \Mod(\CCC)$$ and $$\Lex_{\alpha}(\CCC, \DDD) = \Cont_{\alpha}(\CCC\op, \DDD\op; \Mod(k)) \subseteq \Mod(\CCC \otimes \DDD).$$ For $\alpha = \aleph_0$, we obtain the familiar categories $\Lex(\CCC) = \Lex_{\aleph_0}(\CCC)$ of left exact (that is, finite limit preserving) modules and $\Lex(\CCC,\DDD) = \Lex_{\aleph_0}(\CCC, \DDD)$ of modules that are left exact in both variables.

The category $\Lex_{\alpha}(\CCC)$ is locally $\alpha$-presentable, and we have $(\Lex_{\alpha}(\CCC))_{\alpha} \cong \CCC$. The category $\Lex_{\alpha}(\CCC)$ is the $\alpha$-free cocompletion of $\CCC$: every object in it can be written as an $\alpha$-filtered colimit of $\CCC$-objects, and according to \cite[Thm. 9.9]{kelly2}, for any cocomplete $k$-linear category $\ddd$ we have
\begin{equation}\label{alphafree}
\Cocont(\Lex_{\alpha}(\CCC), \ddd) = \Cocont_{\alpha}(\CCC, \ddd).
\end{equation}

Conversely, for a locally $\alpha$-presentable $k$-linear category $\ccc$, according to \cite[Thm. 7.2 + \textsection7.4]{kelly2}, we have 
\begin{equation}\label{lexalpha}
\ccc \cong \Lex_{\alpha}(\ccc_{\alpha}).
\end{equation}
One thus also obtains a natural \emph{$\alpha$-cocomplete tensor product} for small $\alpha$-cocomplete $k$-linear categories $\CCC$ and $\DDD$ \cite{kelly2}, \cite{kelly1}, given by
$$\CCC \otimes_{\alpha} \DDD = (\Lex_{\alpha}(\CCC) \boxtimes_{\mathsf{LP}} \Lex_{\alpha}(\DDD))_{\alpha}.$$
This $\alpha$-cocomplete tensor product satisfies the following universal property for every small $\alpha$-cocomplete $k$-linear category $\EEE$:
\begin{equation}\label{univalphatensor}
\Cocont_{\alpha}(\CCC \otimes_{\alpha} \DDD, \EEE) \cong \Cocont_{\alpha}(\CCC, \DDD; \EEE).
\end{equation}

For small finitely cocomplete categories $\CCC$ and $\DDD$, we denote $\CCC \otimes_{\mathsf{fp}} \DDD = \CCC \otimes_{\aleph_0} \DDD$.

The following alternative description of the tensor product of locally presentable categories is useful for our purpose. It appears for example in \cite{cavigliahorel}; we provide a proof for the convenience of the reader.

\begin{proposition} \label{prodlocpres}
	For locally $\alpha$-presentable $k$-linear categories $\ccc$ and $\ddd$, we have an equivalence
	$$\ccc \boxtimes_{\mathsf{LP}} \ddd \cong \Lex_{\alpha}(\ccc_{\alpha}, \ddd_{\alpha}).$$
\end{proposition}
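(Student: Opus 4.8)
The plan is to establish the equivalence $\ccc \boxtimes_{\mathsf{LP}} \ddd \cong \Lex_{\alpha}(\ccc_{\alpha}, \ddd_{\alpha})$ by checking that the right-hand side satisfies the universal property characterizing $\ccc \boxtimes_{\mathsf{LP}} \ddd$, namely that for every cocomplete $k$-linear category $\eee$ we have a natural equivalence $\Cocont(\Lex_{\alpha}(\ccc_{\alpha}, \ddd_{\alpha}), \eee) \cong \Cocont(\ccc, \ddd; \eee)$. First I would use the identifications \eqref{lexalpha}, i.e.\ $\ccc \cong \Lex_{\alpha}(\ccc_{\alpha})$ and $\ddd \cong \Lex_{\alpha}(\ddd_{\alpha})$, to rewrite the target as $\Cocont(\Lex_{\alpha}(\ccc_{\alpha}), \Lex_{\alpha}(\ddd_{\alpha}); \eee)$.

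Next I would peel off the two cocontinuous-extension properties one variable at a time. By \eqref{alphafree} applied in the first variable (with the second variable held fixed), a functor $\ccc_{\alpha} \times \Lex_{\alpha}(\ddd_{\alpha}) \lra \eee$ that is cocontinuous in the second variable and $\alpha$-cocontinuous in the first extends uniquely to a functor on $\Lex_{\alpha}(\ccc_{\alpha}) \times \Lex_{\alpha}(\ddd_{\alpha})$ cocontinuous in both variables; applying \eqref{alphafree} again in the remaining variable yields
$$\Cocont(\Lex_{\alpha}(\ccc_{\alpha}), \Lex_{\alpha}(\ddd_{\alpha}); \eee) \cong \Cocont_{\alpha}(\ccc_{\alpha}, \ddd_{\alpha}; \eee).$$
On the other hand, the defining universal property \eqref{univalphatensor} of the $\alpha$-cocomplete tensor product, together with $\ccc_{\alpha} \otimes_{\alpha} \ddd_{\alpha} = (\Lex_{\alpha}(\ccc_{\alpha}) \boxtimes_{\mathsf{LP}} \Lex_{\alpha}(\ddd_{\alpha}))_{\alpha} = (\ccc \boxtimes_{\mathsf{LP}} \ddd)_{\alpha}$, gives $\Cocont_{\alpha}(\ccc_{\alpha}, \ddd_{\alpha}; \eee) \cong \Cocont_{\alpha}((\ccc \boxtimes_{\mathsf{LP}} \ddd)_{\alpha}, \eee)$, and a final appeal to \eqref{alphafree}/\eqref{lexalpha} for $\ccc \boxtimes_{\mathsf{LP}} \ddd$ identifies this with $\Cocont(\ccc \boxtimes_{\mathsf{LP}} \ddd, \eee)$. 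Stringing these equivalences together and invoking the uniqueness (up to equivalence) of a representing object for $\Cocont(-, \eee) \cong \Cocont(\ccc, \ddd; \eee)$ yields the claim. Alternatively — and perhaps more cleanly — one can simply observe that $\Lex_{\alpha}(\ccc_{\alpha}, \ddd_{\alpha})$ is locally $\alpha$-presentable with $(\Lex_{\alpha}(\ccc_{\alpha}, \ddd_{\alpha}))_{\alpha} \cong \ccc_{\alpha} \otimes_{\alpha} \ddd_{\alpha}$, and then apply \eqref{lexalpha} to conclude $\Lex_{\alpha}(\ccc_{\alpha}, \ddd_{\alpha}) \cong \Lex_{\alpha}(\ccc_{\alpha} \otimes_{\alpha} \ddd_{\alpha}) = \Lex_{\alpha}((\ccc \boxtimes_{\mathsf{LP}} \ddd)_{\alpha}) \cong \ccc \boxtimes_{\mathsf{LP}} \ddd$.

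The step I expect to be the main obstacle is the careful bookkeeping in the two-variable extension argument: one must verify that $\alpha$-presented objects in $\Lex_{\alpha}(\ccc_{\alpha}, \ddd_{\alpha}) \subseteq \Mod(\ccc_{\alpha} \otimes \ddd_{\alpha})$ are exactly (the image of) $\ccc_{\alpha} \otimes_{\alpha} \ddd_{\alpha}$, i.e.\ that the ``bi-$\alpha$-continuous modules'' category is the $\alpha$-free cocompletion of the $\alpha$-cocomplete tensor product $\ccc_{\alpha} \otimes_{\alpha} \ddd_{\alpha}$ rather than of the naive tensor product $\ccc_{\alpha} \otimes \ddd_{\alpha}$. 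This requires knowing that every module in $\Lex_{\alpha}(\ccc_{\alpha}, \ddd_{\alpha})$ is an $\alpha$-filtered colimit of representables coming from $\ccc_{\alpha} \otimes_{\alpha} \ddd_{\alpha}$, and that the $\alpha$-small colimits taken inside $\Lex_{\alpha}(\ccc_{\alpha}, \ddd_{\alpha})$ agree with those computed sectionwise in each variable; both are consequences of the general theory in \cite{kelly2}, but they need to be stated precisely. Once this identification is in hand, everything else is a formal manipulation of the universal properties \eqref{alphafree}, \eqref{lexalpha} and \eqref{univalphatensor}.
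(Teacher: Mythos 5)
Your overall strategy (show that $\Lex_{\alpha}(\ccc_{\alpha},\ddd_{\alpha})$ represents the universal property defining $\boxtimes_{\mathsf{LP}}$) is legitimate and genuinely different from the paper's argument, but as written it has a gap at its core. Your main chain of equivalences runs
$$\Cocont(\ccc,\ddd;\eee)\cong\Cocont_{\alpha}(\ccc_{\alpha},\ddd_{\alpha};\eee)\cong\Cocont_{\alpha}((\ccc\boxtimes_{\mathsf{LP}}\ddd)_{\alpha},\eee)\cong\Cocont(\ccc\boxtimes_{\mathsf{LP}}\ddd,\eee),$$
and this never mentions $\Lex_{\alpha}(\ccc_{\alpha},\ddd_{\alpha})$ at all: it merely re-derives the defining universal property of $\boxtimes_{\mathsf{LP}}$. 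To invoke uniqueness of the representing object you still need the other half, namely $\Cocont(\Lex_{\alpha}(\ccc_{\alpha},\ddd_{\alpha}),\eee)\cong\Cocont_{\alpha}(\ccc_{\alpha},\ddd_{\alpha};\eee)$, which is exactly the statement you flag as ``the main obstacle'': that $\Lex_{\alpha}(\ccc_{\alpha},\ddd_{\alpha})$ is the free $\alpha$-cocompletion of $\ccc_{\alpha}\otimes_{\alpha}\ddd_{\alpha}$, equivalently $(\Lex_{\alpha}(\ccc_{\alpha},\ddd_{\alpha}))_{\alpha}\cong\ccc_{\alpha}\otimes_{\alpha}\ddd_{\alpha}$. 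Since in this paper $\ccc_{\alpha}\otimes_{\alpha}\ddd_{\alpha}$ is \emph{defined} as $(\ccc\boxtimes_{\mathsf{LP}}\ddd)_{\alpha}$, that identification is, via \eqref{lexalpha}, equivalent to the proposition being proved; deferring it to ``the general theory in \cite{kelly2}'' without an argument leaves the entire content of the statement unestablished, and your ``alternative, cleaner'' route is the same deferral made explicit. Two further points compound this: \eqref{univalphatensor} is stated for small $\alpha$-cocomplete categories, whereas you apply it to a large cocomplete $\eee$; and your last step uses $\ccc\boxtimes_{\mathsf{LP}}\ddd\cong\Lex_{\alpha}((\ccc\boxtimes_{\mathsf{LP}}\ddd)_{\alpha})$, i.e.\ local $\alpha$-presentability of the tensor product, which in the paper is obtained as a \emph{consequence} of this very proposition, so an independent justification is needed to avoid circularity.

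For contrast, the paper sidesteps both the universal property and any identification of $\alpha$-presentable objects by computing directly with the concrete model $\ccc\boxtimes_{\mathsf{LP}}\ddd=\Cont(\ccc\op,\ddd)$: one has $\Cont(\ccc\op,\ddd)\cong\Cocont(\ccc,\ddd\op)\op\cong\Cocont_{\alpha}(\ccc_{\alpha},\ddd\op)\op\cong\Cont_{\alpha}(\ccc_{\alpha}\op,\Cont_{\alpha}(\ddd_{\alpha}\op,\Mod(k)))\cong\Lex_{\alpha}(\ccc_{\alpha},\ddd_{\alpha})$, using only \eqref{lexalpha}, \eqref{alphafree} and the fact that limits in $\Cont_{\alpha}(\ddd_{\alpha}\op,\Mod(k))$ are computed pointwise. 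If you want to salvage your route, you must actually prove the cocompletion statement (every object of $\Lex_{\alpha}(\ccc_{\alpha},\ddd_{\alpha})$ is an $\alpha$-filtered colimit of objects of $\ccc_{\alpha}\otimes_{\alpha}\ddd_{\alpha}$, and these are $\alpha$-presented there), or independently establish $\Lex_{\alpha}(\CCC,\DDD)\cong\Lex_{\alpha}(\CCC\otimes_{\alpha}\DDD)$ for $\alpha$-cocomplete $\CCC,\DDD$; at that point the paper's direct computation is the shorter path.
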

\begin{proof}
	We have equivalences
	\begin{equation*}
	\begin{aligned}
	\ccc \boxtimes_{\mathsf{LP}} \ddd &= \Cont(\ccc\op,\ddd)\\
	&\cong \Cocont(\ccc, \ddd\op)\op\\
	&\cong \Cocont(\Lex_{\alpha}(\ccc_{\alpha}),\ddd\op)\op\\
	&\cong \Cocont_{\alpha}(\ccc_{\alpha}, \ddd\op)\op\\
	&\cong \Cont_{\alpha}(\ccc_{\alpha}\op, \Cont_{\alpha}(\ddd_{\alpha}\op, \Mod(k)))\\
	&\cong \Lex_{\alpha}(\ccc_{\alpha}, \ddd_{\alpha}),
	\end{aligned}
	\end{equation*}
	where we have used \eqref{lexalpha} in the third and fifth steps, \eqref{alphafree} in the fourth step, and the fact that limits are computed pointwise in $\Cont_{\alpha}(\ddd_{\alpha}\op, \Mod(k))$ in the last step.
\end{proof}

Next we turn our attention to ($k$-linear) Grothendieck categories.
The following result combines \cite[Thm 3.7]{lowenGP} and \cite[Thm 7.2 + \textsection7.4]{kelly2}, the latter being the enriched version of the analogous classical statement already formulated in \cite{gabrielulmer}: 
\begin{theorem}\label{thmlocpres}
	Let $\ccc$ be a locally $\alpha$-presentable Grothendieck category and consider the inclusion $u_{\ccc}: \ccc_{\alpha} \lra \ccc$. The canonical functor $\ccc \lra \Mod(\ccc_{\alpha}): C \longmapsto \ccc(u(-), C)$ factors through an equivalence of categories $$\ccc \lra \Sh(\ccc_{\alpha}, u_{\ccc}^{-1}\ttt_{\ccc}) = \Lex_{\alpha}(\ccc_{\alpha}).$$
\end{theorem}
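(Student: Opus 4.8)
The plan is to flesh out the stated combination of \cite[Thm 3.7]{lowenGP} and \cite[Thm 7.2 + \S7.4]{kelly2}. \textbf{Step 1:} I would first check that the inclusion $u_{\ccc}\colon (\ccc_{\alpha}, u_{\ccc}^{-1}\ttt_{\ccc}) \lra (\ccc, \ttt_{\ccc})$ is an LC morphism in the sense of Definition \ref{defLC}, where $\ttt_{\ccc}$ is the canonical topology of jointly epimorphic sieves. The equality $\ttt_{\ccc_{\alpha}} = u_{\ccc}^{-1}\ttt_{\ccc}$ holds by the very choice of the topology on $\ccc_{\alpha}$, and since $\ccc_{\alpha}$ is by definition a \emph{full} subcategory of $\ccc$, the functor $u_{\ccc}$ is fully faithful; hence (F) and (FF) are satisfied trivially, using the maximal covering sieve $\langle 1_A \rangle$ on $A$ together with fullness (resp.\ faithfulness) of $u_{\ccc}$. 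The only genuine content is condition (G): for every $C \in \ccc$ one must produce a jointly epimorphic family $(u_{\ccc}(A_i) \lra C)_i$ with $A_i \in \ccc_{\alpha}$. This is standard for locally $\alpha$-presentable categories: the essentially small category $\ccc_{\alpha}$ contains a set of strong generators, and the family of all morphisms from those generators into $C$ is jointly (strongly) epimorphic; equivalently, $C$ is the canonical $\alpha$-filtered colimit of the $\alpha$-presentable objects mapping into it. Thus $u_{\ccc}$ satisfies (LC).

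\textbf{Step 2:} Granting Step 1, the general Gabri\"el--Popescu theorem recalled in \S\ref{partensorgroth} gives directly that $u_{\ccc}^{-1}\ttt_{\ccc}$ is a topology and that the canonical functor $\ccc \lra \Mod(\ccc_{\alpha})$, $C \longmapsto \ccc(u_{\ccc}(-), C)$, factors through an equivalence onto $\Sh(\ccc_{\alpha}, u_{\ccc}^{-1}\ttt_{\ccc})$; in particular the essential image of this functor is exactly the replete full subcategory $\Sh(\ccc_{\alpha}, u_{\ccc}^{-1}\ttt_{\ccc}) \subseteq \Mod(\ccc_{\alpha})$. \textbf{Step 3:} It remains to identify this subcategory with $\Lex_{\alpha}(\ccc_{\alpha})$. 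First one observes that the canonical functor already takes values in $\Lex_{\alpha}(\ccc_{\alpha})$: for each $C$ the functor $\ccc(-,C)\colon \ccc\op \lra \Mod(k)$ sends colimits to limits, and since $\alpha$-small colimits in $\ccc_{\alpha}$ are computed as in $\ccc$, the restriction $\ccc(u_{\ccc}(-), C)$ is $\alpha$-continuous, i.e.\ an object of $\Lex_{\alpha}(\ccc_{\alpha}) = \Cont_{\alpha}(\ccc_{\alpha}\op, \Mod(k))$. By \eqref{lexalpha}, this same restricted-Yoneda functor induces an equivalence $\ccc \lra \Lex_{\alpha}(\ccc_{\alpha})$, so its essential image equals $\Lex_{\alpha}(\ccc_{\alpha})$. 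Comparing with Step 2, the two replete full subcategories $\Sh(\ccc_{\alpha}, u_{\ccc}^{-1}\ttt_{\ccc})$ and $\Lex_{\alpha}(\ccc_{\alpha})$ of $\Mod(\ccc_{\alpha})$ are the essential image of one and the same functor, hence they coincide; together with Step 2 this is the asserted factorization.

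The routine verifications sit in Steps 1 and 3 and I expect no serious obstacle. The single point meriting care is bookkeeping: one must make sure the equivalence $\ccc \cong \Lex_{\alpha}(\ccc_{\alpha})$ supplied by Kelly's theorem is witnessed by the same ``restricted Yoneda'' functor $C \longmapsto \ccc(u_{\ccc}(-), C)$ appearing in the Gabri\"el--Popescu statement, for it is precisely this coincidence of functors that lets the two descriptions of the essential image be matched.
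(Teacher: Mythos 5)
Your proposal is correct and follows essentially the same route as the paper, which proves the theorem precisely by combining the general Gabri\"el--Popescu theorem of \cite[Thm 3.7]{lowenGP} (your Steps 1--2, checking that $u_{\ccc}$ is an LC morphism) with Kelly's \cite[Thm 7.2 + \S 7.4]{kelly2} giving $\ccc \cong \Lex_{\alpha}(\ccc_{\alpha})$ via the same restricted Yoneda functor (your Step 3). The verifications you supply, including the repleteness/essential-image comparison identifying $\Sh(\ccc_{\alpha}, u_{\ccc}^{-1}\ttt_{\ccc})$ with $\Lex_{\alpha}(\ccc_{\alpha})$, are exactly the routine details the paper leaves to the cited references.
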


We can now prove the main result of this section:

\begin{theorem}\label{maintwotensors}
For Grothendieck categories $\ccc$ and $\ddd$, we have an equivalence of categories
$$\ccc \boxtimes \ddd \cong \ccc \boxtimes_{\mathsf{LP}} \ddd.$$
\end{theorem}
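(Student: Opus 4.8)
The plan is to reduce both sides to a common description involving the categories of $\alpha$-presented objects, for a cardinal $\alpha$ chosen large enough that \emph{both} $\ccc$ and $\ddd$ are locally $\alpha$-presentable. Such an $\alpha$ exists since each of $\ccc$, $\ddd$ is locally presentable; enlarging a regular cardinal preserves local presentability, so we may assume a single regular $\alpha$ works for both. First I would invoke Theorem \ref{thmlocpres} to fix canonical site representations $\ccc \cong \Sh(\ccc_{\alpha}, u_{\ccc}^{-1}\ttt_{\ccc}) = \Lex_{\alpha}(\ccc_{\alpha})$ via the LC morphism $u_{\ccc}: \ccc_{\alpha} \lra \ccc$, and similarly $\ddd \cong \Lex_{\alpha}(\ddd_{\alpha})$ via $u_{\ddd}: \ddd_{\alpha} \lra \ddd$. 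By Definition \ref{defthedef} and the welldefinedness Proposition \ref{propwelldef}, we may compute $\ccc \boxtimes \ddd$ using precisely these two LC morphisms, so that
$$\ccc \boxtimes \ddd = \Sh(\ccc_{\alpha} \otimes \ddd_{\alpha}, u_{\ccc}^{-1}\ttt_{\ccc} \boxtimes u_{\ddd}^{-1}\ttt_{\ddd}).$$

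The second step is to identify this sheaf category with $\Lex_{\alpha}(\ccc_{\alpha}, \ddd_{\alpha})$. By \S\ref{partensorstrict}--\S\ref{partensorrel} (the same reasoning used in the proof of the proposition on continuity of $\phi \otimes \psi$), a module $F \in \Mod(\ccc_{\alpha} \otimes \ddd_{\alpha})$ lies in $\Sh(\ccc_{\alpha} \otimes \ddd_{\alpha}, \ttt_1 \boxtimes \ttt_2)$ with $\ttt_1 = u_{\ccc}^{-1}\ttt_{\ccc}$ and $\ttt_2 = u_{\ddd}^{-1}\ttt_{\ddd}$ if and only if it is a sheaf in each variable separately, i.e. $F(-, D) \in \Sh(\ccc_{\alpha}, \ttt_1) = \Lex_{\alpha}(\ccc_{\alpha})$ for all $D \in \ddd_{\alpha}$ and $F(C, -) \in \Sh(\ddd_{\alpha}, \ttt_2) = \Lex_{\alpha}(\ddd_{\alpha})$ for all $C \in \ccc_{\alpha}$. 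But being a sheaf for $u_{\ccc}^{-1}\ttt_{\ccc}$ means exactly preserving $\alpha$-small limits as a functor $\ccc_{\alpha}^{\op} \lra \Mod(k)$ — this is the content of the identification $\Sh(\ccc_{\alpha}, u_{\ccc}^{-1}\ttt_{\ccc}) = \Lex_{\alpha}(\ccc_{\alpha})$ appearing in Theorem \ref{thmlocpres}. Hence $F$ is left $\alpha$-exact in each variable, which is precisely the defining condition for $\Lex_{\alpha}(\ccc_{\alpha}, \ddd_{\alpha}) = \Cont_{\alpha}(\ccc_{\alpha}^{\op}, \ddd_{\alpha}^{\op}; \Mod(k))$. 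Therefore $\ccc \boxtimes \ddd \cong \Lex_{\alpha}(\ccc_{\alpha}, \ddd_{\alpha})$.

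The third step is immediate: Proposition \ref{prodlocpres} gives $\ccc \boxtimes_{\mathsf{LP}} \ddd \cong \Lex_{\alpha}(\ccc_{\alpha}, \ddd_{\alpha})$, since $\ccc$ and $\ddd$ are both locally $\alpha$-presentable. Combining the two identifications yields $\ccc \boxtimes \ddd \cong \ccc \boxtimes_{\mathsf{LP}} \ddd$, as desired.

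I expect the main obstacle to be the second step, specifically pinning down the equivalence $\Sh(\ccc_{\alpha} \otimes \ddd_{\alpha}, \ttt_1 \boxtimes \ttt_2) \cong \Lex_{\alpha}(\ccc_{\alpha}, \ddd_{\alpha})$ cleanly. Two points need care. First, one must verify that the "sheaf in each variable separately" characterization of $\ttt_1 \boxtimes \ttt_2$-sheaves — established in the text for topologies $\ttt_\AAA, \ttt_\BBB$ arising as $u^{-1}\ttt_{\ccc}$, etc. — really does apply here; this is exactly \eqref{cap} combined with \eqref{eqTsup} and the translation between $T$, $W$, $L$, so it is available, but the bookkeeping should be spelled out. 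Second, one should confirm that "sheaf for the canonical topology restricted along $u_\ccc$" coincides on the nose with "$\alpha$-continuous functor on $\ccc_\alpha^{\op}$"; this is the substance of \cite[Thm 3.7]{lowenGP} together with \eqref{lexalpha}, and it is what makes the whole argument go through. Everything else is formal manipulation of the universal properties already recorded in the excerpt.
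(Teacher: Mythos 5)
Your proposal is correct and follows essentially the same route as the paper: choose a common regular cardinal $\alpha$, use Theorem \ref{thmlocpres} to compute $\ccc \boxtimes \ddd$ from the canonical sites $(\ccc_{\alpha}, u_{\ccc}^{-1}\ttt_{\ccc})$ and $(\ddd_{\alpha}, u_{\ddd}^{-1}\ttt_{\ddd})$, identify the tensor-product sheaf category with the intersection of the two one-sided sheaf categories, hence with $\Lex_{\alpha}(\ccc_{\alpha}, \ddd_{\alpha})$, and conclude by Proposition \ref{prodlocpres}. The only difference is that you spell out the bookkeeping (the $\eqref{cap}$/$T$--$W$--$L$ translation and the identification of one-sided sheaves with $\alpha$-continuous functors in each variable) which the paper compresses into a single sentence.
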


\begin{proof}
Let $\alpha$ be a regular cardinal for which both $\ccc$ and $\ddd$ are locally $\alpha$-presentable.
By Theorem \ref{thmlocpres}, we have 
	$$\ccc \boxtimes \ddd = \Sh(\ccc_{\alpha} \otimes \ddd_{\alpha}, u_{\ccc}^{-1}\ttt_{\ccc} \boxtimes u_{\ddd}^{-1}\ttt_{\ddd}) = \Lex_{\alpha}(\ccc_{\alpha}, \ddd_{\alpha})$$
	since $\Lex_{\alpha}(\ccc_{\alpha}, \ddd_{\alpha})$ describes the intersection of the two one-sided sheaf categories following Theorem \ref{thmlocpres}. This finishes the proof by Proposition \ref{prodlocpres}.
\end{proof}

\begin{remark}
The way in which the tensor product $\boxtimes_{\mathsf{LP}}$ of locally presentable categories is defined through a universal property, makes it well-defined up to equivalence of categories. As an alternative to our independent approach to the tensor product of Grothendieck categories based upon functoriality, one can show in the spirit  of Proposition \ref{prodlocpres} that $\Sh(\AAA, \ttt_{\AAA}) \boxtimes_{\mathsf{LP}} \Sh(\BBB, \ttt_{\BBB}) = \Sh(\AAA \otimes \BBB, \ttt_{\AAA} \boxtimes \ttt_{\BBB})$.
\end{remark}

\begin{corollary}\label{cortensorclosed}
	The subclass of Grothendieck $k$-linear categories within the class of locally presentable $k$-linear categories is closed under the tensor product $\boxtimes_{\mathsf{LP}}$.
\end{corollary}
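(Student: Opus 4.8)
The plan is to read off the statement from Theorem \ref{maintwotensors} together with the fact that, by construction, $\ccc \boxtimes \ddd$ is a Grothendieck category. So, given $k$-linear Grothendieck categories $\ccc$ and $\ddd$, Theorem \ref{maintwotensors} furnishes an equivalence $\ccc \boxtimes_{\mathsf{LP}} \ddd \cong \ccc \boxtimes \ddd$, and since the property of being a Grothendieck category is invariant under equivalence of categories, it suffices to verify that the right-hand side is Grothendieck.

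First I would apply the general Gabri\"el--Popescu theorem recalled in \S \ref{partensorgroth} to choose LC morphisms $u: (\AAA, \ttt_{\AAA}) \lra \ccc$ and $v: (\BBB, \ttt_{\BBB}) \lra \ddd$, so that Definition \ref{defthedef} gives $\ccc \boxtimes \ddd = \Sh(\AAA \otimes \BBB, \ttt_{\AAA} \boxtimes \ttt_{\BBB})$. Next I would invoke the standard fact that for any small $k$-linear category $\CCC$ equipped with a linear topology $\ttt$, the sheaf category $\Sh(\CCC, \ttt)$ is a Grothendieck category: it is the strict localization $\LLL_{\ttt} \subseteq \Mod(\CCC)$ associated to $\ttt$ as in \S \ref{parthree}, the module category $\Mod(\CCC)$ is Grothendieck, and a strict localization of a Grothendieck category is again Grothendieck (the exact left adjoint $a$ transports a generator and exact filtered colimits). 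Taking $\CCC = \AAA \otimes \BBB$ and $\ttt = \ttt_{\AAA} \boxtimes \ttt_{\BBB}$ then shows $\ccc \boxtimes \ddd$ is Grothendieck, whence $\ccc \boxtimes_{\mathsf{LP}} \ddd$ is as well.

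Since each step is a direct appeal to a result established earlier (Theorem \ref{maintwotensors}, Definition \ref{defthedef}, and the localization theory of \S \ref{parlin}--\S \ref{parthree}), there is no genuine obstacle here; the only point worth spelling out is that $\Sh(\CCC,\ttt)$ is Grothendieck, and even that can alternatively be seen from the identification $\ccc \boxtimes \ddd \cong \Lex_{\alpha}(\ccc_{\alpha}, \ddd_{\alpha})$ obtained in the proof of Theorem \ref{maintwotensors}, which exhibits it as a localization of a module category in the form discussed in Theorem \ref{thmlocpres}.
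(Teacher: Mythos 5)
Your argument is correct and is essentially the paper's own (implicit) proof: Corollary \ref{cortensorclosed} is read off from Theorem \ref{maintwotensors} together with the fact that $\ccc \boxtimes \ddd$ is by Definition \ref{defthedef} a linear sheaf category, hence a Grothendieck category. Spelling out that $\Sh(\CCC,\ttt)$ is Grothendieck as a strict localization of $\Mod(\CCC)$ is a fine (and standard) elaboration, but introduces nothing beyond what the paper intends.
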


\subsection{Relation with Deligne's tensor product}\label{pardeligne}
In \cite{deligne}, Deligne defined a tensor product for abelian categories through a universal property. This tensor product is known to exist only under additional assumptions on the categories. Recall that a Grothendieck category $\ccc$ is \emph{locally coherent} if it is locally finitely presentable and $\fp(\ccc)$ is abelian. This defines a 1-1 correspondence between locally coherent Grothendieck categories on the one hand and small abelian categories on the other hand, the inverse being given by $\aaa \longmapsto \Lex(\aaa)$. For small abelian categories $\aaa$ and $\bbb$, according to \S \ref{parlocpres} we have
$$\Lex(\aaa) \boxtimes \Lex(\bbb) = \Lex(\aaa, \bbb).$$
Since the tensor product of coherent rings is not necessarily coherent (see for instance \cite[Ex. 21]{franco}), the tensor product of locally coherent Grothendieck categories is not necessarily locally coherent. We can complete \cite[Thm. 18]{franco} as follows:

\begin{theorem}\label{thmloccoh}
	For small abelian categories $\aaa$ and $\bbb$, the following are equivalent:
	\begin{enumerate}
		\item Deligne's tensor product of $\aaa$ and $\bbb$ exists;
		\item The tensor product $\aaa \otimes_{\mathrm{fp}} \bbb$ is abelian;
		\item The tensor product $\Lex(\aaa) \boxtimes \Lex(\bbb)$ is locally coherent.
	\end{enumerate}
	In this case, Deligne's tensor product equals $\aaa \otimes_{\mathrm{fp}} \bbb = \fp(\Lex(\aaa) \boxtimes \Lex(\bbb))$.
\end{theorem}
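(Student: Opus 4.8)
The plan is to take the equivalence $(1)\Leftrightarrow(2)$ and the concluding identification of Deligne's tensor product with $\aaa\otimes_{\mathrm{fp}}\bbb$ directly from \cite[Thm. 18]{franco}, so that the only genuinely new point is $(2)\Leftrightarrow(3)$; this in turn reduces to an unconditional computation of the category of finitely presented objects of $\Lex(\aaa)\boxtimes\Lex(\bbb)$.

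First I would check that $\Lex(\aaa)\boxtimes\Lex(\bbb)$ is locally finitely presentable and that $\fp(\Lex(\aaa)\boxtimes\Lex(\bbb))\cong\aaa\otimes_{\mathrm{fp}}\bbb$. Since $\aaa$ and $\bbb$ are small abelian, hence finitely cocomplete, the categories $\Lex(\aaa)=\Lex_{\aleph_0}(\aaa)$ and $\Lex(\bbb)=\Lex_{\aleph_0}(\bbb)$ are locally finitely presentable Grothendieck categories with $(\Lex_{\aleph_0}(\aaa))_{\aleph_0}\cong\aaa$ and $(\Lex_{\aleph_0}(\bbb))_{\aleph_0}\cong\bbb$. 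By Theorem \ref{maintwotensors} we have $\Lex(\aaa)\boxtimes\Lex(\bbb)\cong\Lex(\aaa)\boxtimes_{\mathsf{LP}}\Lex(\bbb)$, and by the discussion of \S\ref{parlocpres} (applied with $\alpha=\aleph_0$) this tensor product is again locally finitely presentable. Passing to $\aleph_0$-presented objects and using the definition $\aaa\otimes_{\mathrm{fp}}\bbb=\aaa\otimes_{\aleph_0}\bbb=(\Lex_{\aleph_0}(\aaa)\boxtimes_{\mathsf{LP}}\Lex_{\aleph_0}(\bbb))_{\aleph_0}$ gives
$$\fp(\Lex(\aaa)\boxtimes\Lex(\bbb))=(\Lex(\aaa)\boxtimes_{\mathsf{LP}}\Lex(\bbb))_{\aleph_0}=\aaa\otimes_{\mathrm{fp}}\bbb.$$

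Granting this, $(2)\Leftrightarrow(3)$ is immediate: $\Lex(\aaa)\boxtimes\Lex(\bbb)$ is a locally finitely presentable Grothendieck category, so by definition it is locally coherent precisely when $\fp(\Lex(\aaa)\boxtimes\Lex(\bbb))$ is abelian, and the latter category is $\aaa\otimes_{\mathrm{fp}}\bbb$. Combining with $(1)\Leftrightarrow(2)$ from \cite[Thm. 18]{franco} completes the chain of equivalences, and in the equivalent situations Deligne's tensor product equals $\aaa\otimes_{\mathrm{fp}}\bbb=\fp(\Lex(\aaa)\boxtimes\Lex(\bbb))$, again by \cite[Thm. 18]{franco} together with the displayed identification.

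The main obstacle is the middle step: one must ensure that $\boxtimes$ and $\boxtimes_{\mathsf{LP}}$ agree on these inputs (Theorem \ref{maintwotensors}), that local finite presentability is preserved so that passing to $\fp$ is legitimate, and --- most importantly --- that the $\aleph_0$-presented objects of $\Lex(\aaa)\boxtimes_{\mathsf{LP}}\Lex(\bbb)$ are exactly $\aaa\otimes_{\mathrm{fp}}\bbb$ and not some proper closure of it. Everything else is a direct quotation of \cite{franco}.
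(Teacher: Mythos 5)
Your proposal is correct and follows exactly the route the paper intends: the paper gives no separate proof of Theorem \ref{thmloccoh}, relying on \cite[Thm. 18]{franco} for $(1)\Leftrightarrow(2)$ and on \S\ref{parlocpres} (Theorem \ref{maintwotensors}, Proposition \ref{prodlocpres}, the definition $\aaa \otimes_{\mathrm{fp}} \bbb = (\Lex(\aaa)\boxtimes_{\mathsf{LP}}\Lex(\bbb))_{\aleph_0}$ and the universal property \eqref{univalphatensor} reconciling it with L\'opez Franco's finitely cocomplete tensor product) for the identification $\fp(\Lex(\aaa)\boxtimes\Lex(\bbb)) \cong \aaa\otimes_{\mathrm{fp}}\bbb$, from which $(2)\Leftrightarrow(3)$ is the definition of local coherence. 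The point you single out as the main obstacle is indeed the only substantive step, and it is settled as you say by Theorem \ref{maintwotensors} together with the stability of local $\aleph_0$-presentability under $\boxtimes_{\mathsf{LP}}$ recorded in \S\ref{parlocpres}.
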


\subsection{The $\alpha$-Deligne tensor product}\label{paralphadel}

As suggested to us by Henning Krause, we define an $\alpha$-version of the Deligne tensor product for a cardinal $\alpha$ and we show that every tensor product of Grothendieck categories is accompanied by a parallel $\alpha$-Deligne tensor product of its categories of $\alpha$-presented objects for sufficiently large $\alpha$.

\begin{definition}
\begin{enumerate}
\item Let $\aaa$ and $\bbb$ be $\alpha$-cocomplete abelian categories. An \emph{$\alpha$-Deligne tensor product} of $\aaa$ and $\bbb$ is an $\alpha$-cocomplete abelian category $\aaa \bullet_{\alpha} \bbb$ with a functor $\aaa \otimes \bbb \lra \aaa \bullet_{\alpha} \bbb$ which is $\alpha$-cocontinuous in each variable and induces equivalences
$$\mathsf{Cocont}_{\alpha}(\aaa \bullet_{\alpha} \bbb, \ccc) \cong \mathsf{Cocont}_{\alpha}(\aaa, \bbb; \ccc)$$
for every $\alpha$-cocomplete abelian category $\ccc$.

\item Let $\aaa$ and $\bbb$ be abelian categories. If it exists, we define the \emph{modified $\alpha$-Deligne tensor product} to be $$\aaa \tilde{\bullet}_{\alpha} \bbb = \Lex(\aaa)_{\alpha} \bullet_{\alpha} \Lex(\bbb)_{\alpha}.$$
\end{enumerate}
\end{definition}

Note that for $\alpha = \aleph_0$, we have $\aaa \tilde{\bullet}_{\aleph_0} \bbb = \aaa \bullet_{\aleph_0} \bbb = \aaa \bullet \bbb$.

The following is proven along the lines of \cite[Prop. 6.1.13]{kashiwarashapira}, using the description of $\Lex_{\alpha}(\aaa) \cong \mathsf{Ind}_{\alpha}(\aaa)$ as ``ind completion'' in terms of $\alpha$-filtered colimits.

\begin{lemma}\label{lemablex}
Let $\aaa$ be a small $\alpha$-cocomplete abelian category. The category $\Lex_{\alpha}(\aaa)$ is abelian.
\end{lemma}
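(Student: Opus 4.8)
The plan is to use the identification $\Lex_{\alpha}(\aaa) \cong \Ind_{\alpha}(\aaa)$, valid because $\aaa$ is $\alpha$-cocomplete, which exhibits $\Lex_{\alpha}(\aaa)$ as the full subcategory of $\Mod(\aaa)$ consisting of $\alpha$-filtered colimits of representable functors, and to mimic the proof of \cite[Prop. 6.1.13]{kashiwarashapira} that $\Ind$ of an abelian category is abelian. First I would record the structural facts we need. The category $\Lex_{\alpha}(\aaa)$ is $k$-linear and additive; as a full subcategory of $\Mod(\aaa)$ it is closed under $\alpha$-small limits (limits commute with limits), so in particular it has kernels, computed pointwise in $\Mod(\aaa)$; and it is closed under $\alpha$-filtered colimits, since in $\Mod(k)$ these commute with $\alpha$-small limits, and such colimits are exact, being computed pointwise. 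Finally, for every $Z \in \Lex_{\alpha}(\aaa)$ and $A \in \aaa$ one has $\Hom_{\Mod(\aaa)}(\aaa(-,A),Z) \cong Z(A)$, and $Z$, being $\alpha$-continuous, sends a cokernel sequence $A' \to A \to A'' \to 0$ in $\aaa$ to an exact sequence $0 \to Z(A'') \to Z(A) \to Z(A')$.

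The technical heart is the standard representability lemma for $\alpha$-ind-objects, established along the lines of \cite[\S 6.1]{kashiwarashapira}: every morphism $u \colon X \lra Y$ in $\Lex_{\alpha}(\aaa)$ is isomorphic to one of the form $\colim_{i \in I} \aaa(-,\phi_i)$ for a small $\alpha$-filtered category $I$, functors $X_{\bullet}, Y_{\bullet} \colon I \lra \aaa$ and a natural transformation $\phi_{\bullet} \colon X_{\bullet} \lra Y_{\bullet}$ with components $\phi_i \colon X_i \lra Y_i$, such that $X \cong \colim_i \aaa(-,X_i)$ and $Y \cong \colim_i \aaa(-,Y_i)$. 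The content of this lemma is that $X$, $Y$ and $u$ admit presentations over a \emph{common} $\alpha$-filtered index category with $u$ levelwise; this passage to a common index is the only point requiring genuine care, and is the main obstacle, although it is routine and well documented.

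Granting this, fix such a presentation of $u$ and form the objects $\mathcal{K} = \colim_i \aaa(-,\Kern\phi_i)$, $\mathcal{C} = \colim_i \aaa(-,\Cokern\phi_i)$, $\mathcal{M} = \colim_i \aaa(-,\Beeld\phi_i)$ and $\mathcal{N} = \colim_i \aaa(-,\mathrm{Coim}\,\phi_i)$ of $\Lex_{\alpha}(\aaa)$. Since the Yoneda embedding preserves limits, since $\alpha$-filtered colimits in $\Lex_{\alpha}(\aaa)$ are exact, and since $\Hom(A,-)$ is left exact, the pointwise kernel of $u$ equals $\colim_i \aaa(-,\Kern\phi_i) = \mathcal{K}$, which is thus the kernel of $u$ in $\Lex_{\alpha}(\aaa)$; and for any $Z \in \Lex_{\alpha}(\aaa)$ the structural facts above yield natural isomorphisms
$$\Hom_{\Lex_{\alpha}(\aaa)}(\mathcal{C},Z) \;\cong\; \lim_i Z(\Cokern\phi_i) \;\cong\; \lim_i \Kern(Z(Y_i) \to Z(X_i)) \;\cong\; \Kern(\Hom(Y,Z) \to \Hom(X,Z)),$$
identifying $\mathcal{C}$ with $\Cokern u$; hence $\Lex_{\alpha}(\aaa)$ is preabelian. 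Running the same two computations for the induced maps $Y \lra \mathcal{C}$ and $\mathcal{K} \lra X$ shows that $\mathcal{M} = \Kern(Y \lra \Cokern u)$ is the image of $u$ and $\mathcal{N} = \Cokern(\Kern u \lra X)$ is the coimage of $u$. Because $\aaa$ is abelian, the canonical morphisms $\mathrm{Coim}\,\phi_i \lra \Beeld\phi_i$ are isomorphisms, naturally in $i$, so they induce an isomorphism $\mathcal{N} \lra \mathcal{M}$, which a diagram chase identifies with the canonical comparison $\mathrm{Coim}\,u \lra \Beeld\,u$. Therefore $\Lex_{\alpha}(\aaa)$ is abelian.

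As an alternative I would note that, by Theorem \ref{thmlocpres} and standard linear sheaf theory, $\Lex_{\alpha}(\aaa)$ is a reflective subcategory of $\Mod(\aaa)$ whose reflection $a$ is \emph{exact}; then $\Lex_{\alpha}(\aaa)$ is preabelian with $\Kern^{\Lex} = \Kern^{\Mod}$ and $\Cokern^{\Lex} = a \circ \Cokern^{\Mod}$, and applying the exact functor $a$ to the exact sequence $0 \lra \Beeld^{\Mod}u \lra Y \lra \Cokern^{\Mod}u \lra 0$, together with $\Beeld^{\Mod}u \cong \mathrm{Coim}^{\Mod}u$ (as $\Mod(\aaa)$ is abelian), gives $\Beeld^{\Lex}u \cong \mathrm{Coim}^{\Lex}u$. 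I expect to present the first, ind-theoretic, argument, since it is the one indicated in the statement.
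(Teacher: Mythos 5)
Your argument is correct and is essentially the proof the paper intends: the paper only indicates that the lemma is ``proven along the lines of \cite[Prop. 6.1.13]{kashiwarashapira}, using $\Lex_{\alpha}(\aaa) \cong \mathsf{Ind}_{\alpha}(\aaa)$'', and your write-up is exactly that $\alpha$-ind-theoretic argument (common levelwise presentation of a morphism, kernels/cokernels/images computed levelwise and passed through exact $\alpha$-filtered colimits). The reflective-localization alternative you sketch at the end is a reasonable aside, but the main route you present coincides with the paper's.
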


The following analogue of \cite[Lem. 17]{franco} is proven along the same lines, based upon Lemma \ref{lemablex}.

\begin{lemma}\label{lemcharlexdel}
Suppose for small $\alpha$-cocomplete abelian categories $\aaa$ and $\bbb$, the $\alpha$-Deligne tensor product $\aaa \bullet_{\alpha} \bbb$ exists. The category $\Lex_{\alpha}(\aaa \bullet_{\alpha} \bbb)$ is characterized by the following universal property for cocomplete abelian categories $\ccc$:
\begin{equation}
\Cocont(\Lex_{\alpha}(\aaa \bullet_{\alpha} \bbb), \ccc) = \Cocont_{\alpha}(\aaa, \bbb; \ccc).
\end{equation}
\end{lemma}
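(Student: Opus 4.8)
The plan is to obtain the displayed universal property by composing two universal properties that are already recorded in the text: the one exhibiting $\Lex_{\alpha}(-)$ as the $\alpha$-free cocompletion, namely equation \eqref{alphafree}, and the one defining the $\alpha$-Deligne tensor product $\bullet_{\alpha}$. Since both of these are natural in the target category, their composite will again be natural, which is precisely what is needed for a genuine characterization rather than a mere bijection on objects.

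First I would fix the ambient cardinal $\alpha$ and observe that, when $\aaa \bullet_{\alpha} \bbb$ exists, it is an admissible input for $\Lex_{\alpha}(-)$: it is abelian and $\alpha$-cocomplete by definition, and it is (essentially) small — this smallness I would read off from the construction of the $\alpha$-Deligne product when it exists, exactly as in the $\aleph_0$-case treated in \cite[Lem.\ 17]{franco} — and hence $\Lex_{\alpha}(\aaa \bullet_{\alpha} \bbb)$ is abelian by Lemma \ref{lemablex}. Next, for an arbitrary cocomplete abelian category $\ccc$, I would apply \eqref{alphafree} with $\CCC = \aaa \bullet_{\alpha} \bbb$ and $\ddd = \ccc$ to get a natural equivalence
$$\Cocont(\Lex_{\alpha}(\aaa \bullet_{\alpha} \bbb), \ccc) \cong \Cocont_{\alpha}(\aaa \bullet_{\alpha} \bbb, \ccc),$$
and then invoke the defining universal property of the $\alpha$-Deligne tensor product — legitimate because a cocomplete abelian category is in particular $\alpha$-cocomplete abelian — to get
$$\Cocont_{\alpha}(\aaa \bullet_{\alpha} \bbb, \ccc) \cong \Cocont_{\alpha}(\aaa, \bbb; \ccc).$$
Composing the two yields the equality claimed in the lemma. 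To conclude the "characterization" part I would note that the composite equivalence is implemented by the canonical functor $\aaa \otimes \bbb \lra \aaa \bullet_{\alpha} \bbb \lra \Lex_{\alpha}(\aaa \bullet_{\alpha} \bbb)$, which is $\alpha$-cocontinuous in each variable, so that the usual Yoneda-type argument forces any cocomplete $k$-linear category carrying such a functor and satisfying this property to be equivalent to $\Lex_{\alpha}(\aaa \bullet_{\alpha} \bbb)$.

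There is no hard analytic content here; the proof is entirely formal, and the only things I would take care to state explicitly are bookkeeping points: (i) that $\aaa \bullet_{\alpha} \bbb$ is small $\alpha$-cocomplete, so that \eqref{alphafree} applies to it — this is where the analogy with \cite[Lem.\ 17]{franco} together with Lemma \ref{lemablex} enters; (ii) that a cocomplete abelian $\ccc$ simultaneously qualifies as a target for \eqref{alphafree}, where being "cocomplete $k$-linear" is enough, and for the $\bullet_{\alpha}$-universal property, where being "$\alpha$-cocomplete abelian" is required; and (iii) the naturality in $\ccc$ of both equivalences, so that they genuinely compose. I do not expect a real obstacle: relative to Lemma \ref{lemablex}, which carries the weight, the present lemma is the soft consequence.
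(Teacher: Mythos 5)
Your proposal is correct and matches the paper's intended argument: the paper only indicates that the lemma ``is proven along the same lines'' as \cite[Lem.~17]{franco} based upon Lemma~\ref{lemablex}, and that argument is exactly your composition of the $\alpha$-free cocompletion property \eqref{alphafree} (applied to the small $\alpha$-cocomplete abelian category $\aaa \bullet_{\alpha} \bbb$) with the defining universal property \eqref{univalphatensor}-style equivalence of the $\alpha$-Deligne tensor product, using Lemma~\ref{lemablex} to know that $\Lex_{\alpha}(\aaa \bullet_{\alpha} \bbb)$ is itself cocomplete abelian. Only note that in the final uniqueness step the candidate category must lie in the test class, so the characterization is among cocomplete \emph{abelian} categories rather than arbitrary cocomplete $k$-linear ones; this is how the lemma is meant to be read.
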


The following replacement of \cite[Thm. 18]{franco} is proven along the same lines. For $\alpha = \aleph_0$, note that the second part of condition (1) is automatically fulfilled.

\begin{theorem}\label{thmalphadel}
For $\alpha$-cocomplete abelian categories $\aaa$ and $\bbb$, the following are equivalent: 
\begin{enumerate}
\item The $\alpha$-Deligne tensor product $\aaa \bullet_{\alpha} \bbb$ exists and $\Lex_{\alpha}(\aaa \otimes_{\alpha} \bbb)$ is abelian;
\item The $\alpha$-cocomplete tensor product $\aaa \otimes_{\alpha} \bbb$ is abelian.
\end{enumerate}
In this case, we have $\aaa \bullet_{\alpha} \bbb = \aaa \otimes_{\alpha} \bbb$.
\end{theorem}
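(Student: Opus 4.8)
The plan is to follow the proof of Theorem~\ref{thmloccoh} (that is, \cite[Thm.~18]{franco}), being careful about the single step where that argument exploited that $\Lex_{\aleph_0}(\aaa)$ is automatically a Grothendieck category when $\aaa$ is a small abelian category. The one preliminary I would record is the identification, valid for every regular $\alpha$ and all small $\alpha$-cocomplete abelian $\aaa$, $\bbb$,
\[
\Lex_{\alpha}(\aaa \otimes_{\alpha} \bbb) \;\cong\; \Lex_{\alpha}(\aaa) \boxtimes_{\mathsf{LP}} \Lex_{\alpha}(\bbb) \;\cong\; \Lex_{\alpha}(\aaa, \bbb),
\]
obtained from the definition $\aaa \otimes_{\alpha} \bbb = (\Lex_{\alpha}(\aaa) \boxtimes_{\mathsf{LP}} \Lex_{\alpha}(\bbb))_{\alpha}$, the reconstruction \eqref{lexalpha}, the equivalence $(\Lex_{\alpha}(\CCC))_{\alpha} \cong \CCC$, and Proposition~\ref{prodlocpres} applied to $\Lex_{\alpha}(\aaa)$ and $\Lex_{\alpha}(\bbb)$. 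In particular $\Lex_{\alpha}(\aaa, \bbb)$ is locally $\alpha$-presentable with $(\Lex_{\alpha}(\aaa, \bbb))_{\alpha} \cong \aaa \otimes_{\alpha} \bbb$, so the clause ``$\Lex_{\alpha}(\aaa \otimes_{\alpha} \bbb)$ is abelian'' in (1) says precisely that $\Lex_{\alpha}(\aaa) \boxtimes_{\mathsf{LP}} \Lex_{\alpha}(\bbb)$ is abelian. For $\alpha = \aleph_0$ this holds automatically, since $\Lex(\aaa)$ and $\Lex(\bbb)$ are locally coherent, hence Grothendieck, hence so is their tensor product by Corollary~\ref{cortensorclosed}; this is why the clause is absent in Theorem~\ref{thmloccoh}.

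For the implication (2)$\Rightarrow$(1): assuming $\aaa \otimes_{\alpha} \bbb$ is abelian, it is a small $\alpha$-cocomplete abelian category, so $\Lex_{\alpha}(\aaa \otimes_{\alpha} \bbb)$ is abelian by Lemma~\ref{lemablex}, which is the second clause of (1). For the first clause I would verify that $\aaa \otimes_{\alpha} \bbb$, with the canonical functor $\aaa \otimes \bbb \lra \aaa \otimes_{\alpha} \bbb$ (which is $\alpha$-cocontinuous in each variable), satisfies the defining universal property of $\aaa \bullet_{\alpha} \bbb$: for every $\alpha$-cocomplete abelian $\ccc$ we need $\Cocont_{\alpha}(\aaa \otimes_{\alpha} \bbb, \ccc) \cong \Cocont_{\alpha}(\aaa, \bbb; \ccc)$. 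For small $\ccc$ this is exactly \eqref{univalphatensor}; the general case follows from the standard observation that an $\alpha$-cocontinuous functor out of the small category $\aaa \otimes_{\alpha} \bbb$, and likewise a functor $\aaa \times \bbb \lra \ccc$ that is $\alpha$-cocontinuous in each variable, factors through a small $\alpha$-cocomplete full subcategory of $\ccc$. Hence $\aaa \bullet_{\alpha} \bbb$ exists and equals $\aaa \otimes_{\alpha} \bbb$.

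For (1)$\Rightarrow$(2): assume $\aaa \bullet_{\alpha} \bbb$ exists and, by the preliminary step, that $\Lex_{\alpha}(\aaa, \bbb)$ is abelian. Both $\Lex_{\alpha}(\aaa \bullet_{\alpha} \bbb)$ --- abelian by Lemma~\ref{lemablex}, as $\aaa \bullet_{\alpha} \bbb$ is small $\alpha$-cocomplete abelian --- and $\Lex_{\alpha}(\aaa, \bbb)$ are then cocomplete abelian categories, and I would compare their universal properties. By Proposition~\ref{prodlocpres} together with \eqref{alphafree} applied in each variable and the universal property of $\boxtimes_{\mathsf{LP}}$, the category $\Lex_{\alpha}(\aaa, \bbb)$ corepresents $\ccc \mapsto \Cocont_{\alpha}(\aaa, \bbb; \ccc)$ on \emph{all} cocomplete categories, via precomposition with the canonical functor $\iota \colon \aaa \times \bbb \lra \Lex_{\alpha}(\aaa, \bbb)$; by Lemma~\ref{lemcharlexdel}, $\Lex_{\alpha}(\aaa \bullet_{\alpha} \bbb)$ corepresents the same functor on cocomplete \emph{abelian} categories, via precomposition with the canonical functor $j \colon \aaa \times \bbb \lra \aaa \bullet_{\alpha} \bbb \hookrightarrow \Lex_{\alpha}(\aaa \bullet_{\alpha} \bbb)$ (which is $\alpha$-cocontinuous in each variable because the $\alpha$-free cocompletion embedding preserves $\alpha$-small colimits). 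Applying the first property to $\ccc = \Lex_{\alpha}(\aaa \bullet_{\alpha} \bbb)$ yields a cocontinuous $G$ with $G\iota \cong j$; applying the second to $\ccc = \Lex_{\alpha}(\aaa, \bbb)$ --- legitimate exactly because $\Lex_{\alpha}(\aaa, \bbb)$ is abelian --- yields a cocontinuous $H$ with $Hj \cong \iota$. Then $HG$ and $\mathrm{id}$ agree after precomposition with $\iota$, and $GH$ and $\mathrm{id}$ agree after precomposition with $j$, so the uniqueness clauses (for the second identity one again uses that $\Lex_{\alpha}(\aaa \bullet_{\alpha} \bbb)$ is abelian, to invoke Lemma~\ref{lemcharlexdel} for the codomain) give $HG \cong \mathrm{id}$ and $GH \cong \mathrm{id}$. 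Thus $\Lex_{\alpha}(\aaa \bullet_{\alpha} \bbb) \simeq \Lex_{\alpha}(\aaa, \bbb)$; passing to $\alpha$-presented objects and using the preliminary step, $\aaa \bullet_{\alpha} \bbb \simeq \aaa \otimes_{\alpha} \bbb$, which is therefore abelian, and this also gives the final identity of the two tensor products.

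The main obstacle is the bookkeeping of the two universal properties --- one valid against all cocomplete categories, the other only against cocomplete abelian ones --- and, in particular, recognizing that the extra clause ``$\Lex_{\alpha}(\aaa \otimes_{\alpha} \bbb)$ is abelian'' is precisely what licenses the use of Lemma~\ref{lemcharlexdel} with $\Lex_{\alpha}(\aaa, \bbb)$ as codomain in the step (1)$\Rightarrow$(2). The remaining points (the size reduction in (2)$\Rightarrow$(1), and checking that the canonical bilinear functors are $\alpha$-cocontinuous in each variable) are routine.
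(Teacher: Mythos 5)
Your proposal is correct and follows essentially the same route as the paper: (2)$\Rightarrow$(1) via the evident universal property of $\aaa \otimes_{\alpha} \bbb$ together with Lemma~\ref{lemablex}, and (1)$\Rightarrow$(2) by showing that $\Lex_{\alpha}(\aaa \otimes_{\alpha} \bbb) \cong \Lex_{\alpha}(\aaa,\bbb)$, being cocomplete and abelian, satisfies the universal property of Lemma~\ref{lemcharlexdel} (via \eqref{alphafree} and \eqref{univalphatensor}), hence coincides with $\Lex_{\alpha}(\aaa \bullet_{\alpha} \bbb)$, and then passing to $\alpha$-presented objects. You merely unpack the characterization clause into explicit mutually inverse cocontinuous functors and add some size bookkeeping, which the paper leaves implicit.
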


\begin{proof}
If $\aaa \otimes_{\alpha} \bbb$ is abelian, it obviously satisfies the universal property of $\aaa \bullet_{\alpha} \bbb$ and further, $\Lex_{\alpha}(\aaa \otimes_{\alpha} \bbb)$ is abelian by Lemma \ref{lemablex}. Conversely, suppose $\aaa \bullet_{\alpha} \bbb$ exists and $\Lex_{\alpha}(\aaa \otimes_{\alpha} \bbb)$ is abelian. The categories $\Lex_{\alpha}(\aaa \bullet_{\alpha} \bbb)$ and $\Lex_{\alpha}(\aaa \otimes_{\alpha} \bbb)$ have the categories $\aaa \bullet_{\alpha} \bbb$ and $\aaa \otimes_{\alpha} \bbb$ as respective categories of $\alpha$-presented objects, whence it suffices to show that $\Lex_{\alpha}(\aaa \otimes_{\alpha} \bbb)$, being cocomplete and abelian by assumption, has the universal property of Lemma \ref{lemcharlexdel}.
But this is clearly the case by \eqref{alphafree} \eqref{univalphatensor}.
\end{proof}

We recall the following:

\begin{proposition}\cite[Cor. 5.2]{krause5}\label{alphaabelian}
Let $\ccc$ be a Grothendieck category. There exists a cardinal $\alpha$ such that $\ccc$ is locally $\beta$-presentable and $\ccc_{\beta}$ is abelian for $\beta \geq \alpha$.
\end{proposition}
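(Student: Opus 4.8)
This is \cite[Cor.~5.2]{krause5}; here is the argument I would give. Throughout, $\beta$ ranges over regular cardinals (local $\beta$-presentability being defined only for these). The plan is to reduce the statement to two ingredients: \textbf{(a)} $\ccc$ is locally presentable, which is standard \cite[Prop.~3.4.16]{borceux}, so $\ccc$ is locally $\alpha_0$-presentable for some $\alpha_0$, hence locally $\beta$-presentable for all $\beta \geq \alpha_0$; and \textbf{(b)} $\ccc_{\beta}$ is closed under subobjects in $\ccc$ for all sufficiently large $\beta$. The reduction is easy: $\ccc_{\beta}$ is always closed under $\beta$-small colimits, in particular under cokernels and finite biproducts, so once it is closed under subobjects it is closed under kernels, hence under all finite limits and colimits; a quotient of a $\beta$-presentable object is a cokernel, hence again $\beta$-presentable; and the abelian structure of $\ccc$ then restricts to $\ccc_{\beta}$. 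So it suffices to produce $\alpha \geq \alpha_0$ such that $\ccc_{\beta}$ is closed under subobjects and quotients for every $\beta \geq \alpha$.

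For \textbf{(b)} I would invoke the Gabri\"el--Popescu realization. Fix a generator $G \in \ccc$, put $R = \mathrm{End}_{\ccc}(G)$, and recall that $i = \ccc(G,-)\colon \ccc \lra \Mod(R)$ is fully faithful with an exact left adjoint $a\colon \Mod(R) \lra \ccc$ satisfying $a i \cong \mathrm{id}$ and $a(R) \cong G$. As a right adjoint between locally presentable categories, $i$ is accessible, so there is a $\mu_0$ with $i$ preserving $\beta$-filtered colimits for \emph{every} $\beta \geq \mu_0$ (a $\beta$-filtered colimit with $\beta \geq \mu_0$ is a fortiori $\mu_0$-filtered); a formal adjunction argument then gives that $a$ sends $\beta$-presentable $R$-modules to $\beta$-presentable objects of $\ccc$ for all such $\beta$. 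Taking $\gamma$ with $G$ $\gamma$-presentable, I would put $\alpha = \max(\alpha_0,\gamma,\mu_0,|R|^{+})$ and fix $\beta \geq \alpha$.

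The remaining work is a cardinality estimate. First, every $X \in \ccc_{\beta}$ admits an epimorphism $G^{(\lambda)} \twoheadrightarrow X$ with $\lambda < \beta$: writing $X$ as the $\beta$-filtered colimit of the images of $G^{(S')}\lra X$ over the $\beta$-small subsets $S' \subseteq \ccc(G,X)$ (legitimate since filtered colimits are exact in $\ccc$ and $\beta$ is regular), $\beta$-presentability of $X$ forces $\mathrm{id}_X$ to factor through one of these images, which is then all of $X$. Hence any subobject of $X$ is a quotient of a subobject of some $G^{(\lambda)} = a(R^{(\lambda)})$ with $\lambda < \beta$. By exactness of $a$ and $a i \cong \mathrm{id}$, each subobject $N \hookrightarrow a(R^{(\lambda)})$ is of the form $a(M')$ for the submodule $M' = R^{(\lambda)} \times_{i a(R^{(\lambda)})} i(N) \subseteq R^{(\lambda)}$. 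Since $\beta$ is regular and $\beta > |R|$, we have $|R^{(\lambda)}| < \beta$, so $M'$ is $\beta$-presentable in $\Mod(R)$; therefore $a(M')$, and every quotient of it, is $\beta$-presentable in $\ccc$. Thus $\ccc_{\beta}$ is closed under subobjects and quotients, which by the first paragraph finishes the proof.

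I expect the main obstacle to be the step in the second paragraph --- upgrading ``$i$ accessible'' to ``$a$ preserves $\beta$-presentability for a whole \emph{tail} of $\beta$'', rather than merely cofinally many. This uniformity is precisely what yields the conclusion for all $\beta \geq \alpha$, and it is where well-poweredness of $\ccc$, packaged in the module realization, really enters; the local-presentability bookkeeping, the factorization of $\beta$-presentable objects through $G^{(\lambda)}$ with $\lambda < \beta$, and the cardinal arithmetic are all routine.
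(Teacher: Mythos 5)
The paper offers no proof of this statement --- it is imported verbatim from \cite[Cor.~5.2]{krause5} --- so the only meaningful comparison is with Krause's argument. Your route is close in spirit (Gabri\"el--Popescu plus a cardinal bound involving $|R|$ and the accessibility rank of the inclusion $i$), but packaged differently: Krause, roughly, realizes $\ccc$ as a quotient of $\Mod(R)$ by a localizing subcategory $\www$ which, for $\beta$ large, is generated by $\beta$-presentable objects, and identifies $\ccc_{\beta}$ with a Serre quotient of the abelian category of $\beta$-presentable $R$-modules, so abelianness comes for free from the quotient formalism. You instead prove directly that $\ccc_{\beta}$ is closed under subobjects in $\ccc$ for all large regular $\beta$ and deduce abelianness from that; this is self-contained and avoids the quotient-category machinery, at the price of the explicit subobject-lifting and cardinality bookkeeping.

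That bookkeeping is essentially sound, except for one step you must not leave as stated: ``therefore $a(M')$, and every quotient of it, is $\beta$-presentable in $\ccc$.'' Read literally, this invokes closure of $\beta$-presentable objects of $\ccc$ under quotients, which is precisely what is being proved (and is false for general $\beta$), so as written the argument looks circular. The repair is one more application of your own lifting trick: a quotient $Y$ of $N=a(M')$ has the form $N/K$ with $K\subseteq a(M')$; pulling $i(K)$ back along the unit $M'\lra ia(M')$ yields a submodule $K'\subseteq M'$ with $a(K')\cong K$, and exactness of $a$ gives $Y\cong a(M'/K')$. Since $|M'/K'|\le |M'|<\beta$ and $\beta$ is regular with $\beta>|R|$, the module $M'/K'$ is $\beta$-presentable in $\Mod(R)$ (choose $<\beta$ generators; the kernel of the resulting presentation again has cardinality $<\beta$), hence $Y$ is $\beta$-presentable in $\ccc$ because $i$ preserves $\beta$-filtered colimits. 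With that half-sentence inserted, the rest of your proof --- the reduction of abelianness to closure under subobjects, the uniform accessibility of $i$ for all $\beta\geq\mu_0$, the epimorphism $G^{(\lambda)}\twoheadrightarrow X$ with $\lambda<\beta$ extracted from $\beta$-presentability of $X$, and the identification of subobjects of $a(R^{(\lambda)})$ with images under $a$ of submodules --- is correct.
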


Whereas the tensor product of two Grothendieck categories cannot be related to the Deligne tensor product in general, it can always be related to an $\alpha$-Deligne tensor product in the following way:

\begin{proposition}
Let $\ccc$ and $\ddd$ be Grothendieck categories. There exists a cardinal $\alpha$ such that for $\beta \geq \alpha$ the $\beta$-Deligne tensor product $\ccc_{\beta} \bullet_{\beta} \ddd_{\beta}$ exists and we have $$(\ccc \boxtimes \ddd)_{\beta} \cong \ccc_{\beta} \bullet_{\beta} \ddd_{\beta}.$$
\end{proposition}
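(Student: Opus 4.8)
The plan is to reduce the statement to an identification of $(\ccc \boxtimes \ddd)_{\beta}$ with the $\beta$-cocomplete tensor product $\ccc_{\beta} \otimes_{\beta} \ddd_{\beta}$, and then to feed this into Theorem \ref{thmalphadel}. First I would fix the cardinal. Observe that $\ccc \boxtimes \ddd$ is again Grothendieck: by Theorem \ref{maintwotensors} it coincides with $\ccc \boxtimes_{\mathsf{LP}} \ddd$, which is Grothendieck by Corollary \ref{cortensorclosed}. Applying Proposition \ref{alphaabelian} to each of the three Grothendieck categories $\ccc$, $\ddd$ and $\ccc \boxtimes \ddd$ and letting $\alpha$ be a regular cardinal exceeding the three cardinals thus produced, we obtain that for all $\beta \geq \alpha$ the categories $\ccc$, $\ddd$, $\ccc \boxtimes \ddd$ are locally $\beta$-presentable and $\ccc_{\beta}$, $\ddd_{\beta}$, $(\ccc \boxtimes \ddd)_{\beta}$ are small, $\beta$-cocomplete and abelian.

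Next, fix $\beta \geq \alpha$. Since $\ccc$ and $\ddd$ are locally $\beta$-presentable, \eqref{lexalpha} yields equivalences $\ccc \cong \Lex_{\beta}(\ccc_{\beta})$ and $\ddd \cong \Lex_{\beta}(\ddd_{\beta})$. Combining these with the definition of the $\beta$-cocomplete tensor product recalled in \S\ref{parlocpres} and with Theorem \ref{maintwotensors}, I would obtain
$$\ccc_{\beta} \otimes_{\beta} \ddd_{\beta} = (\Lex_{\beta}(\ccc_{\beta}) \boxtimes_{\mathsf{LP}} \Lex_{\beta}(\ddd_{\beta}))_{\beta} \cong (\ccc \boxtimes_{\mathsf{LP}} \ddd)_{\beta} \cong (\ccc \boxtimes \ddd)_{\beta},$$
using here that an equivalence of locally presentable categories restricts to an equivalence of categories of $\beta$-presented objects. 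In particular $\ccc_{\beta} \otimes_{\beta} \ddd_{\beta}$ is abelian, by the choice of $\alpha$.

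Finally I would apply Theorem \ref{thmalphadel} to the small $\beta$-cocomplete abelian categories $\ccc_{\beta}$ and $\ddd_{\beta}$: condition (2) of that theorem is precisely what the previous paragraph established, so the $\beta$-Deligne tensor product $\ccc_{\beta} \bullet_{\beta} \ddd_{\beta}$ exists and equals $\ccc_{\beta} \otimes_{\beta} \ddd_{\beta} \cong (\ccc \boxtimes \ddd)_{\beta}$, which is the asserted formula. The step I expect to require the most care — the main obstacle — is the middle display: one must make sure that passing to $\beta$-presented objects really intertwines $\boxtimes_{\mathsf{LP}}$ of the $\beta$-free cocompletions with $\otimes_{\beta}$ as stated. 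This is by definition of $\otimes_{\beta}$, but it depends on $\ccc$ and $\ddd$ being genuinely locally $\beta$-presentable (so that $\ccc \cong \Lex_{\beta}(\ccc_{\beta})$ and $\ddd \cong \Lex_{\beta}(\ddd_{\beta})$), on local $\alpha$-presentability persisting for all $\beta \geq \alpha$, and on $(\ccc \boxtimes \ddd)_{\beta}$ being genuinely the category of $\beta$-presented objects of $\ccc \boxtimes \ddd$; all of this is secured by choosing $\alpha$ large as above. The remainder is a formal combination of Theorems \ref{maintwotensors} and \ref{thmalphadel} with Proposition \ref{alphaabelian}.
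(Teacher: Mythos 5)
Your proposal is correct and follows essentially the same route as the paper's proof: choose $\alpha$ via Proposition \ref{alphaabelian} applied to $\ccc$, $\ddd$ and $\ccc \boxtimes \ddd$, identify $\ccc_{\beta} \otimes_{\beta} \ddd_{\beta} \cong (\ccc \boxtimes \ddd)_{\beta}$ through Theorem \ref{maintwotensors}, and conclude with Theorem \ref{thmalphadel}. You merely spell out the intermediate steps (Grothendieck-ness of $\ccc \boxtimes \ddd$ and the use of \eqref{lexalpha} and the definition of $\otimes_{\beta}$) that the paper leaves implicit.
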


\begin{proof}
It suffices to note that by Proposition \ref{alphaabelian}, we can choose $\alpha$ such that for $\beta \geq \alpha$ the categories $\ccc$, $\ddd$ and $\ccc \boxtimes \ddd$ are locally $\beta$-presentable and $\ccc_{\beta}$, $\ddd_{\beta}$ and $(\ccc \boxtimes \ddd)_{\beta}$ are abelian. Hence, we have $\ccc_{\beta} \otimes_{\beta} \ddd_{\beta} \cong (\ccc \boxtimes \ddd)_{\beta}$ by Theorem \ref{maintwotensors}, and thus the desired isomorphism holds by Theorem \ref{thmalphadel}.
\end{proof}

As a special case, whereas two small abelian categories do not necessarily have a Deligne tensor product, they do have a modified $\alpha$-Deligne tensor product for sufficiently large $\alpha$:

\begin{corollary}
Let $\aaa$ and $\bbb$ be small abelian categories. There exists a cardinal $\alpha$ such that for $\beta \geq \alpha$ the modified $\beta$-Deligne tensor product $\aaa \tilde{\bullet}_{\beta} \bbb$ exists and we have $$(\Lex(\aaa, \bbb))_{\beta} \cong \aaa \tilde{\bullet}_{\beta} \bbb.$$
\end{corollary}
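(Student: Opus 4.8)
The plan is to read off this statement from the preceding Proposition applied to the pair of Grothendieck categories $\ccc = \Lex(\aaa)$ and $\ddd = \Lex(\bbb)$. As recalled in \S\ref{pardeligne}, for a small abelian category $\aaa$ the category $\Lex(\aaa)$ is a (locally coherent) Grothendieck category, so the preceding Proposition does apply, and it produces a cardinal $\alpha$ with the property that for all $\beta \geq \alpha$ the $\beta$-Deligne tensor product $\Lex(\aaa)_{\beta} \bullet_{\beta} \Lex(\bbb)_{\beta}$ exists and
$$\bigl(\Lex(\aaa) \boxtimes \Lex(\bbb)\bigr)_{\beta} \cong \Lex(\aaa)_{\beta} \bullet_{\beta} \Lex(\bbb)_{\beta}.$$

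It then remains only to unwind two definitions. First, by the definition of the modified $\beta$-Deligne tensor product one has $\Lex(\aaa)_{\beta} \bullet_{\beta} \Lex(\bbb)_{\beta} = \aaa \tilde{\bullet}_{\beta} \bbb$, so its existence is exactly the asserted existence, and the right-hand side of the displayed equivalence is $\aaa \tilde{\bullet}_{\beta} \bbb$. Second, by the identity $\Lex(\aaa) \boxtimes \Lex(\bbb) = \Lex(\aaa, \bbb)$ recorded in \S\ref{pardeligne}, the left-hand side of the equivalence is $(\Lex(\aaa, \bbb))_{\beta}$. Substituting both identifications gives $(\Lex(\aaa, \bbb))_{\beta} \cong \aaa \tilde{\bullet}_{\beta} \bbb$, which is the claim.

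Since the argument is a pure specialization of the preceding Proposition, there is no genuine obstacle here: the only points to verify are that $\Lex(\aaa)$ and $\Lex(\bbb)$ fall within the hypotheses of that Proposition — which they do, being Grothendieck categories — and that the notion of modified $\alpha$-Deligne tensor product was set up precisely so as to match $\ccc_{\beta} \bullet_{\beta} \ddd_{\beta}$ for $\ccc = \Lex(\aaa)$ and $\ddd = \Lex(\bbb)$, which it was by construction. Thus the corollary follows immediately.
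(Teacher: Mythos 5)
Your proof is correct and is exactly the argument the paper intends: the corollary is stated as a special case of the preceding proposition applied to $\ccc = \Lex(\aaa)$ and $\ddd = \Lex(\bbb)$, combined with the definition of the modified $\beta$-Deligne tensor product and the identity $\Lex(\aaa) \boxtimes \Lex(\bbb) = \Lex(\aaa,\bbb)$ from the earlier subsection. Nothing is missing.
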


\subsection{Relation with the tensor product of toposes and future prospects}\label{parfuture}

In Theorem \ref{maintwotensors} we have shown that the tensor product of Grothendieck categories is a special instance of the tensor product of locally presentable linear categories, using special linear site presentations of the categories. This raises the natural question whether, if one takes the tensor product of locally presentable categories as starting point, there is a shorter route to the tensor product of Grothendieck categories than the one we followed. 

First one may note that in order to obtain an abstract tensor product of Grothen\-dieck categories, it suffices to prove Corollary \ref{maintwotensors} directly. As an anonymous referee suggested, one can prove along the lines of \cite[Cor. 15]{franco} that the tensor product of locally presentable categories preserves the Grothendieck property. However, this does not bring us any closer to the concrete expressions of the tensor product in terms of arbitrary representations in terms of linear sites, which is the main aim of the current paper.

Secondly one may note that, after proposing our concrete formula for the tensor product of Grothendieck categories using linear sites, it suffices to show that this formula satisfies the universal property of the tensor product of locally presentable linear categories in order to show at once that our formula leads to a good definition, and that Theorem \ref{maintwotensors} holds.
This approach indeed works, and is based upon the possibility to write down an analogous formula to \eqref{alphafree}, with regard to a linear site $(\AAA, \ttt)$. Precisely, for any cocomplete $k$-linear category $\ddd$ we have
\begin{equation}\label{pittskey}
\mathsf{Cocont}(\Sh(\AAA, \ttt), \ddd) = \mathsf{Cocont}_{\ttt}(\AAA, \ddd)
\end{equation}
where the right hand side denotes the category of linear functors $F: \AAA \lra \ddd$ whose induced colimit preserving functor $\hat{F}: \Mod(\AAA) \lra \ddd$ sends inclusions of covers $R \subseteq  \AAA(-,A)$ to isomorphisms in $\ddd$. 
Rather than spelling out the proof of the universal property for the tensor product in Definition \ref{defthedef}, we refer the reader to \cite{pitts} where the parallel reasoning is performed for toposes over $\Set$. 
The tensor product of Grothendieck categories which we have introduced can be seen as a linear counterpart to the \emph{product} of Grothendieck toposes which is described by Johnstone in \cite{johnstone}. 
In \cite{pitts}, Pitts shows that the product of Grothendieck toposes is a special instance of the ($\Set$-based) tensor product of locally presentable categories, using the universal property.

Unlike in the case of toposes, working over $\Mod(k)$ rather than over $\Set$, our tensor product does not describe a $2$-categorical product, but instead introduces a $2$-categorical monoidal structure on linear toposes. Further, we should note that the establishment of the correct formula for the tensor product does not automatically yield the tangible functoriality properties for linear sites which we have proven. With our motivation coming from non-commutative geometry, it is precisely the flexibility in choosing appropriate sites, and the possibility to view certain functors of geometric origin as induced by natural morphisms of sites, which is of greatest interest to us. 

The notion of LC morphism which we prove in Proposition \ref{propLC} to be stable under the tensor product, is more restrictive than a morphism inducing an equivalence on the level of sheaf categories, and so this result cannot be deduced a posteriori from the existence of the tensor product satisfying the universal property. In fact, the class of LC morphisms opens up the interesting possibility to describe the ``category of Grothendieck categories'' up to equivalence as a 2-category of fractions, obtained from the category of linear sites by inverting LC morphisms. This fact, and its implications for the tensor product, will be elaborated further in \cite{juliathesis}.

On the other hand, a combination of Pitts' approach and our description of the tensor product in terms of localizing Serre subcategories leads to a natural tensor product for 
well-generated algebraic triangulated categories, which stand in relation to derived categories of differential graded algebras like Grothendieck categories stand in relation to module categories according to \cite{porta}. To make this idea precise, one takes To\"en's inner hom between dg categories as starting point, and between (homologically) cocomplete (with respect to arbitrary set indexed coproducts) dg categories one considers its restriction $\RHom_c(\aaa, \bbb)$ to bimodules inducing cocontinuous functors on the level of homology, inspired upon \cite[\S 7]{toen}. The \emph{cocomplete tensor product} between cocomplete dg categories $\aaa$ and $\bbb$ is by definition, if it exists, the unique cocomplete dg category $\aaa \boxtimes_c \bbb$ satisfying the following universal property with respect to cocomplete dg categories $\ccc$:
\begin{equation}\label{eqbox}
\RHom_c(\aaa \boxtimes_c \bbb, \ccc) \cong \RHom_c(\aaa, \RHom_c(\bbb, \ccc)).
\end{equation}
In the dg world, dg topologies are not quite the right tool in orther to perform localization on the derived level. We define a \emph{dg site} as a small dg category $\AAA$ along with a localizing thick subcategory $\www \subseteq D(\AAA)$ of the derived category. If by $\underline{D}(\AAA)$ we denote the dg derived category, then the dg quotient $\underline{D}(\AAA)$ by $\www$ can be characterized by the following replacement of \eqref{pittskey}
\begin{equation}
\RHom_c(\underline{D}(\AAA)/\www, \ccc) \cong \RHom_{\www}(\AAA, \ccc)
\end{equation}
where the right hand side denotes the subcategory of $\RHom(\AAA, \ccc)$ consisting of the bimodules for which the induced cocontinuous functor $D(\AAA) \lra H^0(\ccc)$ sends $\www$ to zero. With a definition inspired upon \S \ref{partensorlocsub}, one can define the tensor product of well-generated triangulated categories and show that it satisfies the universal property \eqref{eqbox}. The development of this approach, as well as its precise relation to the tensor product of Grothendieck categories, in particular under suitable flatness hypothesis like the one from \cite{lowenvandenberghab}, are work in progress and will appear in \cite{juliathesis}. Further, the definition should also be related to the tensor product of locally presentable infinity categories \cite[\S 4.1]{lurie}.

\def\cprime{$'$} \def\cprime{$'$}
\providecommand{\bysame}{\leavevmode\hbox to3em{\hrulefill}\thinspace}
\providecommand{\MR}{\relax\ifhmode\unskip\space\fi MR }
% \MRhref is called by the amsart/book/proc definition of \MR.
\providecommand{\MRhref}[2]{%
  \href{http://www.ams.org/mathscinet-getitem?mr=#1}{#2}
}
\providecommand{\href}[2]{#2}

\end{document}